\theoremstyle{plain}
\newtheorem{theorem}{Theorem}[section]
\newtheorem{proposition}[theorem]{Proposition}
\newtheorem{lemma}[theorem]{Lemma}
\newtheorem{corollary}[theorem]{Corollary}
\newtheorem{question}[theorem]{Question}
\theoremstyle{definition}
\newtheorem{definition}[theorem]{Definition}
\theoremstyle{remark}
\newtheorem*{remark}{Remark}
\renewcommand {\tilde} {\widetilde}
\begin{document}

\title{On removable sets for holomorphic functions}

\date{March 6, 2015}

\author[M. Younsi]{Malik Younsi}
\thanks{Supported by NSERC}
\address{Department of Mathematics, Stony Brook University, Stony Brook, NY 11794-3651, United States.}
\email{malik.younsi@stonybrook.edu}

\keywords{Removability, analytic capacity, conformal, quasiconformal.}
\subjclass[2010]{primary 30H05, 30C35, 30C62, 30C85.}

\begin{abstract}
We present a comprehensive survey on removability of compact plane sets with respect to various classes of holomorphic functions. We also discuss some applications and several open questions, some of which are new.
\end{abstract}

\maketitle

\section{Introduction}
A classical theorem generally attributed to Riemann asserts that every bounded holomorphic function on some open subset of the plane minus a point extends analytically to the whole open set. In other words, single points are removable for bounded holomorphic functions. A repeated application of Riemann's theorem obviously shows that any finite set is removable, so is any countable compact set, by a simple argument involving the Baire category theorem. What about uncountable compact sets? Is it possible to find a geometric characterization of those that are removable? This difficult problem dates back to Painlev\'e in 1888, who was the first one to investigate the properties of the compact plane sets that are removable for bounded holomorphic functions. Since then, the study of removable sets has been extended to several other classes of holomorphic functions. Understanding the properties of removable sets with respect to each of these classes has proven over the years to be of fundamental importance. Indeed, in many situations, the possibility of extending a function defined outside a compact set is somehow independent of the particular function but rather depends on which class it belongs to and on the geometric properties of the set.

The purpose of this article is to present a comprehensive survey on removability with respect to various classes of holomorphic functions, including proofs of some results that are probably well-known to experts but do not appear in the literature, as far as we know. Our main motivation comes from the fact that although there have been some excellent surveys on removable sets, each of them is either outdated or centered on only one type of removability. Moreover, some of the widely used removability results are essentially folklore theorems and our intention is to give rigorous proofs. Actually, as we will see, some statements have sometimes been taken for granted because they seem almost trivial at first glance, yet no rigorous and correct proof exists to the best of our knowledge. Another important motivation is that several people have expressed their interest in such a survey, particularly for the more modern notion of \textit{conformal removability}, which appears naturally in holomorphic dynamics. Indeed, it frequently happens that two dynamical systems are conjugated by some homeomorphism of the sphere which is (quasi)conformal outside the Julia set. If the latter is conformally removable, then the conjugation is in fact (quasi)conformal everywhere. Lastly, we would like to mention that the subjects treated in this survey reflect the author's own interests and are by no means exhaustive. There are several other notions of removability which we will not discuss, such as removability for bounded $K$-quasiregular mappings for example.

Let $E \subset \mathbb{C}$ be compact and let $\Omega:=\mathbb{C}_\infty \setminus E$ be the complement of $E$ with respect to the Riemann sphere. We shall be interested mainly in the following classes of holomorphic functions on $\Omega$ :

\begin{eqnarray*}
H^{\infty}(\Omega) &=& \{ \, f:\Omega \to \mathbb{C} \, \, \mbox{holomorphic and bounded} \, \}\\
A(\Omega) &=& \{ \, f:\mathbb{C}_\infty \to \mathbb{C} \, \, \mbox{continuous and holomorphic on} \, \, \Omega \, \}\\
S(\Omega) &=& \{ \, \mbox{conformal maps} \, \, f:\Omega \to \mathbb{C}_\infty \, \}\\
CH(\Omega) &=& \{\, \mbox{homeomorphisms} \, \, f:\mathbb{C}_\infty \to \mathbb{C}_\infty \, \, \mbox{which are conformal on} \, \, \Omega \, \}.\\
\end{eqnarray*}
Note that clearly, we have the inclusions
$$A(\Omega) \subset H^{\infty}(\Omega)$$
and
$$CH(\Omega) \subset S(\Omega).$$
Moreover, each of the above classes is monotonic in the sense that if $E_1 \subset E_2$, then the class corresponding to $\Omega_1 :=\mathbb{C}_\infty \setminus E_1$ is contained in the class corresponding to $\Omega_2:=\mathbb{C}_\infty \setminus E_2$.

\begin{definition}
Let $\mathcal{F}$ be one of the above classes of functions $H^{\infty},A,S$ or $CH$. We say that $E$ is $\mathcal{F}$-\textit{removable} if $\mathcal{F}(\Omega)=\mathcal{F}(\mathbb{C}_\infty)$, in other words, if every function in $\mathcal{F}(\Omega)$ is the restriction of an element of $\mathcal{F}(\mathbb{C}_\infty)$. More precisely, the compact set $E$ is $H^\infty,A,S,CH$-removable respectively if
\begin{itemize}
\item every bounded holomorphic function on $\Omega$ is constant;
\item every continuous function on $\mathbb{C}_\infty$ that is holomorphic on $\Omega$ is constant;
\item every conformal map on $\Omega$ is a M\"{o}bius transformation;
\item every homeomorphism of $\mathbb{C}_\infty$ onto itself that is conformal on $\Omega$ is a M\"{o}bius transformation.
\end{itemize}
\end{definition}

Note that in view of the preceding remarks, $H^{\infty}$-removable sets are $A$-removable and $S$-removable sets are $CH$-removable. Furthermore, any compact subset of a removable compact set is also removable, by monotonicity.

The rest of the paper is organized as follows. Sect. \ref{sec1} is about the study of $H^{\infty}$-removable sets. We discuss their main properties, particularly from the point of view of Hausdorff measure and dimension. This section also includes a brief introduction to analytic capacity and Tolsa's solution of Painlev\'e's problem. In Sect. \ref{sec2}, we present the main properties of $A$-removable compact sets, also from the point of view of Hausdorff measure, as well as a brief introduction to the analogue of analytic capacity in this new setting. Then, in Sect. \ref{sec3}, we first introduce some preliminaries on quasiconformal mappings and then proceed with a description of the properties of $S$-removable sets, including Ahlfors and Beurling's characterization based on the notion of zero absolute area. The last section, Sect. \ref{sec4}, deals with $CH$-removable compact sets. Among other things, we present Jones and Smirnov's geometric sufficient condition for $CH$-removability, as well as Bishop's construction of nonremovable sets of zero area. We also describe some applications of $CH$-removability to conformal welding and to the dynamics of complex quadratic polynomials. Finally, the section concludes with a discussion of several open questions.

\section{$H^{\infty}$-removable sets}
\label{sec1}
As mentioned in the introduction, the study of removable sets for bounded holomorphic functions has first been instigated by Painlev\'e \cite{PAI}, who raised the problem of finding a geometric characterization of $H^{\infty}$-removable compact sets. Moreover, Painlev\'e was the first one to observe that there is a close relationship between removability for $H^{\infty}$ and Hausdorff measure and dimension. More precisely, he proved that compact sets of Hausdorff dimension strictly less than one are removable. Unfortunately, the converse is false, as we will see later in this section. On the other hand, a well-known lemma of Frostman implies that compact sets of dimension strictly bigger than one are never removable. It follows that one is the critical Hausdorff dimension from the point of view of removability for bounded holomorphic functions. In this case, the situation is much more complicated and Painlev\'e's problem quickly turned out to be extremely difficult. As a matter of fact, it took more than a hundred years until a reasonable solution was obtained, thanks to the work of David, Tolsa and many others. A fundamental tool in the study of Painlev\'e's problem is the so-called \textit{analytic capacity}, an extremal problem introduced by Ahlfors \cite{AHL} in 1947 which, in some sense, measures the size of a compact set from the point of view of $H^{\infty}$-removability. We shall give a brief overview of the complete solution to Painlev\'e's problem at the end of this section, although the proofs will be omitted for the sake of conciseness. For more information on this vast subject, we refer the interested reader to the recent book of Tolsa \cite{TOL}. See also \cite{GAR} and \cite{DUD}.

\subsection{Main properties}

It follows from the aforementioned theorem of Riemann and Liouville's theorem that any single point is $H^{\infty}$-removable. The same argument obviously shows that every finite set is $H^{\infty}$-removable, so is any countable compact set, by simple argument using the Baire category theorem. In fact, we will see at the end of this subsection that any compact countable union of $H^{\infty}$-removable sets is also $H^{\infty}$-removable.

Intuitively, one expects $H^{\infty}$-removable sets to be small, at least in some sense. For instance, it is easy to see that if $E$ is $H^{\infty}$-removable, then its interior must be empty and its complement $\Omega=\mathbb{C}_\infty \setminus E$ must be connected. In fact, we have
\begin{proposition}
\label{TotDisc}
If $E$ is $H^{\infty}$-removable, then $E$ is totally disconnected.
\end{proposition}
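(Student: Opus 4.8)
The plan is to prove the contrapositive: if $E$ fails to be totally disconnected, then $E$ is not $H^\infty$-removable. Suppose $E$ has a connected component $K$ containing at least two distinct points. Since $E$ is compact and connected components of a compact Hausdorff space are closed, $K$ is itself a compact connected subset of $\mathbb{C}$, i.e. a nondegenerate continuum. Because $K \subset E$ and, as noted after the definition, any compact subset of a removable set is removable, it suffices to exhibit a nonconstant bounded holomorphic function on $\mathbb{C}_\infty \setminus K$; this will show that $K$, and hence $E$, is not $H^\infty$-removable.

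To produce such a function, I would apply the Riemann mapping theorem to a complementary component. Let $U$ be one of the connected components of $\mathbb{C}_\infty \setminus K$, for instance the one containing $\infty$. The key topological input is that, since $K$ is a continuum, every component of its complement in the sphere is simply connected; this follows from the fact that a domain in $\mathbb{C}_\infty$ is simply connected precisely when its complement is connected, the complement of $U$ being $K$ together with the remaining components, each of whose boundaries lies in $K$. After conjugating by a M\"obius transformation sending some point of $K$ to $\infty$, I may regard $U$ as a simply connected domain in $\mathbb{C}$ whose complement is nonempty, since it contains the image of $K$ minus one point and $K$ has at least two points. Thus $U$ is a proper simply connected subdomain of $\mathbb{C}$, and the Riemann mapping theorem furnishes a conformal bijection $\varphi : U \to \mathbb{D}$ onto the unit disk $\mathbb{D}$.

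Finally I would assemble the global function: define $f$ on $\mathbb{C}_\infty \setminus K$ by $f = \varphi$ on $U$ and $f \equiv 0$ on every other component. As the components are disjoint open sets, $f$ is holomorphic on $\mathbb{C}_\infty \setminus K$; it is bounded because $|\varphi| < 1$, and nonconstant because $\varphi$ is. Hence $\mathbb{C}_\infty \setminus K$ carries a nonconstant bounded holomorphic function, so $K$ is not $H^\infty$-removable, and by the monotonicity of $H^\infty$-removability this forces $E$ not to be $H^\infty$-removable, completing the contrapositive.

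I expect the main obstacle to be the topological lemma that the complementary components of a continuum in $\mathbb{C}_\infty$ are simply connected, together with the careful verification that the hypotheses of the Riemann mapping theorem are met, namely that the chosen component becomes a genuine proper simply connected planar domain once a point of $K$ is sent to infinity. Granting these, the construction of $f$ and the check of its properties are routine.
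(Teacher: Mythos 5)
Your proof is correct and follows essentially the same route as the paper: apply the Riemann mapping theorem to the complement of a nondegenerate connected component $K$ of $E$ to produce a nonconstant bounded holomorphic function on $\Omega$. In fact you are more careful than the paper's one-line argument, which tacitly treats $\mathbb{C}_\infty\setminus K$ as a single simply connected domain; your step of mapping one complementary component to $\mathbb{D}$ and extending by $0$ on the others handles the case where $K$ separates the sphere.
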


\begin{proof}
If $F \subset E$ is a connected component of $E$ containing more than one point, then by the Riemann mapping theorem there exists a conformal map $f:\mathbb{C}_\infty \setminus F \to \mathbb{D}$. Clearly, $f$ is a nonconstant element of $H^{\infty}(\Omega)$.

\end{proof}
The following proposition shows that $H^{\infty}$-removability is a local property.
\begin{proposition}
\label{propextend}
The following are equivalent :

\begin{enumerate}[\rm(i)]
\item For any open set $U$ with $E \subset U$, every bounded holomorphic function on $U \setminus E$ extends analytically to an element of $H^{\infty}(U)$;
\item $E$ is $H^{\infty}$-removable.
\end{enumerate}
\end{proposition}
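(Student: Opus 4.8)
The plan is to prove the two implications separately, with essentially all of the work concentrated in the direction (ii) $\Rightarrow$ (i); the reverse implication is a routine gluing argument.

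For (i) $\Rightarrow$ (ii), I would start with an arbitrary $f \in H^{\infty}(\Omega)$ and fix a bounded open set $U$ with $E \subset U$ (possible since $E$ is compact in $\mathbb{C}$). The restriction $f|_{U \setminus E}$ is bounded and holomorphic, so by (i) it extends to some $\hat{f} \in H^{\infty}(U)$. Since $\hat{f}$ agrees with $f$ on $U \cap \Omega = U \setminus E$ and $U \cup \Omega = \mathbb{C}_\infty$, the two functions patch together to a single holomorphic function on the whole sphere $\mathbb{C}_\infty$. A holomorphic function on the compact sphere is constant, so $f$ is constant; hence $E$ is $H^{\infty}$-removable.

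For the substantive direction (ii) $\Rightarrow$ (i), the plan is to use a $\bar\partial$-localization (Cauchy transform) to convert the local extension problem into a global removability problem. Given a bounded holomorphic $g$ on $U \setminus E$, I would choose $\phi \in C^{\infty}_c(U)$ with $\phi \equiv 1$ on a neighborhood $V$ of $E$, and set $h := g\,\bar\partial\phi$. Because $\phi \equiv 1$ near $E$ and $\phi$ has compact support in $U$, the function $h$ vanishes near $E$ and outside $\mathrm{supp}\,\phi$, so it extends to a bounded, compactly supported function on $\mathbb{C}$. Its Cauchy transform
\[
g_1(z) = \frac{1}{\pi}\int_{\mathbb{C}} \frac{h(w)}{z-w}\,dA(w)
\]
solves $\bar\partial g_1 = h$, is bounded and continuous on $\mathbb{C}$ with $g_1(\infty)=0$, and is holomorphic off $\mathrm{supp}\,h$ — in particular holomorphic on $V \supset E$. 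I would then define $G$ on $\Omega$ by setting $G := \phi g - g_1$ on $U \setminus E$ and $G := -g_1$ on $\Omega \setminus \mathrm{supp}\,\phi$; these two formulas agree on the overlap (where $\phi = 0$), one checks $\bar\partial G = 0$ on all of $\Omega$, and $G$ is bounded with $G(\infty)=0$. Thus $G \in H^{\infty}(\Omega)$, so removability (ii) forces $G \equiv 0$. Consequently $g = g_1$ on $V \setminus E$, and since $g_1$ is holomorphic on the neighborhood $V$ of $E$, it provides the desired extension across $E$; patching $g_1$ on $V$ with $g$ on $U \setminus E$ yields a bounded holomorphic extension of $g$ to all of $U$.

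I expect the main obstacle to be the verification of the localization construction rather than any deep estimate: one must check that $h$ genuinely extends to a smooth compactly supported function (this is exactly where $\phi \equiv 1$ near $E$ is used to kill $\bar\partial\phi$ there), that the Cauchy transform $g_1$ is simultaneously bounded, continuous, and holomorphic on precisely the complement of $\mathrm{supp}\,h \subset \mathrm{supp}\,\bar\partial\phi$, and that the glued function $G$ is holomorphic and bounded on $\Omega$ including at $\infty$. Once $G$ is produced and shown to lie in $H^{\infty}(\Omega)$, the removability hypothesis does the remaining work automatically, and boundedness of the final extension follows immediately from the boundedness of $g$ and of $g_1$.
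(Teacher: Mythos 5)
Your proof is correct, but the substantive direction (ii) $\Rightarrow$ (i) follows a genuinely different route from the paper's. The paper works with contour integrals: for each $z \in U\setminus E$ it chooses two cycles in $U\setminus(E\cup\{z\})$ with prescribed winding numbers and uses Cauchy's integral formula to split $f=f_1+f_2$, with $f_1$ holomorphic on $U$ and $f_2$ holomorphic on $\mathbb{C}\setminus E$, bounded, and vanishing at infinity; removability then kills $f_2$, and $f_1$ is the extension. You instead perform a $\overline{\partial}$-localization: with the paper's convention $\mathcal{C}\mu(z)=\int(\zeta-z)^{-1}\,d\mu(\zeta)$, your $G=\phi g-g_1$ is precisely Vitushkin's localization $V_\phi g=\phi g+\frac{1}{\pi}\mathcal{C}(g\overline{\partial}\phi)$ (the sign discrepancy is absorbed by the opposite kernel convention), so you are in effect rederiving Lemma \ref{VitLocOp1} for a single cutoff and running the argument of Lemma \ref{LemExtension} in the easier case $E\subset U$, where one $\phi$ with $\phi\equiv 1$ near $E$ suffices in place of a partition of unity. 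The paper's route is more elementary---only the homology version of Cauchy's theorem, no Cauchy transforms of area measures---at the price of having to produce the two cycles; yours requires the standard mapping properties of the Cauchy transform of a bounded compactly supported function (boundedness, continuity, $\overline{\partial}g_1=h$, holomorphy off $\operatorname{supp}h$), all of which you correctly identify as the points to verify, and it scales up directly to the stronger Lemma \ref{LemExtension} (where $E$ need not lie in $U$) and to the $A$-removability analogue. Both of your implications, including the gluing of $G$ across the region where $\phi=0$ and the final patching of $g$ with $g_1$ on the set where $\phi\equiv 1$, are sound.
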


\begin{proof}
Clearly $(i)$ implies $(ii)$. The converse is a simple application of Cauchy's integral formula. Indeed, suppose that $E$ is removable and let $U$ be an open set containing $E$. Let $f$ be any bounded holomorphic function on $U \setminus E$ and fix $z \in U \setminus E$. Let $\Gamma_1$ be a cycle in $U \setminus (E \cup \{z\})$ with winding number one around $E \cup \{z\}$ and zero around $\mathbb{C} \setminus U$. Likewise, let $\Gamma_2$ be a cycle in $U \setminus (E \cup \{z\})$ with winding number one around $E$ and zero around $(\mathbb{C} \setminus U) \cup \{z\}$. Then by Cauchy's integral formula,
$$f(z)=\frac{1}{2\pi i}\int_{\Gamma_1} \frac{f(\zeta)}{\zeta-z} d\zeta - \frac{1}{2\pi i}\int_{\Gamma_2} \frac{f(\zeta)}{\zeta-z} d\zeta:=f_1(z)+f_2(z).$$
Note that by Cauchy's theorem, $f_1(z)$ and $f_2(z)$ do not depend on the precise cycles $\Gamma_1$ and $\Gamma_2$. It follows that $f_1$ and $f_2$ define holomorphic functions on $U$ and $\mathbb{C} \setminus E$ respectively, with $f=f_1+f_2$ on $U \setminus E$. Since $f_1$ and $f$ are bounded near $E$, the function $f_2$ is also bounded there and so is bounded everywhere outside $E$. But $E$ is removable and $f_2(\infty)=0$, hence $f_2$ is identically zero and therefore $f_1=f$ is the desired bounded analytic extension of $f$ to the whole open set $U$.
\end{proof}
A nice property of $H^{\infty}$-removable sets is that they are closed under unions.

\begin{proposition}
\label{union}
If $E,F$ are $H^{\infty}$-removable compact sets, then $E \cup F$ is also $H^{\infty}$-removable.
\end{proposition}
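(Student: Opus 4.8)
The plan is to reduce the statement to the two hypotheses by splitting an arbitrary $f\in H^{\infty}(\Omega)$, where $\Omega=\mathbb{C}_\infty\setminus(E\cup F)$, into one piece that is singular only on $E$ and another that is singular only on $F$. After subtracting the constant $f(\infty)$ I may assume $f(\infty)=0$, and it suffices to prove $f\equiv 0$. The engine is a smooth (Vitushkin-type) version of the Cauchy splitting already used in the proof of Proposition \ref{propextend}: for $\varphi\in C_c^\infty(\mathbb{C})$ I would set
$$V_\varphi f(z):=\varphi(z)f(z)-\frac{1}{\pi}\int_{\mathbb{C}}\frac{f(w)\,\bar\partial\varphi(w)}{z-w}\,dA(w),\qquad \bar\partial=\frac{\partial}{\partial\bar z}.$$
A direct computation gives $\bar\partial(V_\varphi f)=\varphi\,\bar\partial f$, so $V_\varphi f$ is holomorphic off $\operatorname{supp}\varphi\cap(E\cup F)$ and vanishes at $\infty$. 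Moreover, rewriting it as $V_\varphi f(z)=\frac{1}{\pi}\int\frac{f(w)-f(z)}{w-z}\bar\partial\varphi(w)\,dA(w)$ and using the elementary bound $\int_{\operatorname{supp}\varphi}|w-z|^{-1}\,dA(w)\le 2\pi\operatorname{diam}(\operatorname{supp}\varphi)$ shows that $\|V_\varphi f\|_\infty\le C(\varphi)\|f\|_\infty$, so each piece is bounded.

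Next I would fix $\Phi\in C_c^\infty$ with $\Phi\equiv 1$ on a neighbourhood of $E\cup F$, choose $\varphi_1\in C_c^\infty$ with $\varphi_1\equiv 1$ on a small neighbourhood of $E$, and set $\varphi_2:=\Phi-\varphi_1$. Then $\varphi_1+\varphi_2\equiv 1$ near $E\cup F$, so $\bar\partial(V_{\varphi_1}f+V_{\varphi_2}f-f)=(\Phi-1)\bar\partial f=0$; since this difference is a bounded entire function vanishing at $\infty$, it is identically $0$, giving $f=g+h$ with $g:=V_{\varphi_1}f$ and $h:=V_{\varphi_2}f$. Because $\varphi_2\equiv 0$ near $E$, the support of $\varphi_2\,\bar\partial f$ lies in $F$, so $h$ is a bounded holomorphic function on $\mathbb{C}_\infty\setminus F$ with $h(\infty)=0$; the removability of $F$ then forces $h\equiv 0$, whence $f=V_{\varphi_1}f$ is holomorphic off $\operatorname{supp}\varphi_1\cap(E\cup F)$.

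Finally I would let the neighbourhood of $E$ shrink. Applying the previous step to a sequence $\varphi_1^{(n)}$ with $\operatorname{supp}\varphi_1^{(n)}\downarrow E$ yields $f=V_{\varphi_1^{(n)}}f$ for every $n$, so $f$ extends holomorphically and boundedly across every point of $(E\cup F)\setminus\operatorname{supp}\varphi_1^{(n)}$. As $n\to\infty$ these sets increase to $F\setminus E$, so $f$ extends to a bounded holomorphic function $\tilde f$ on $\mathbb{C}_\infty\setminus E$ with $\tilde f(\infty)=0$; the removability of $E$ now gives $\tilde f\equiv 0$, and therefore $f\equiv 0$.

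I expect the main obstacle to be the case $E\cap F\neq\varnothing$: once the two sets touch there is no smooth cutoff that is $\equiv 1$ on $E$ and $\equiv 0$ on $F\setminus E$, so one cannot confine the singularities of $g$ and $h$ to $E$ and $F$ in a single step. The device that sidesteps this is to demand only that $\varphi_2$ vanish near $E$ (which is always possible), so that $h$ is disposed of cleanly by the removability of $F$, and then to recover the clean localization on $E$ through the shrinking argument. The one genuinely analytic point is the uniform boundedness of $V_\varphi f$; the remainder is bookkeeping with supports and with $\bar\partial$.
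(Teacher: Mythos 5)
Your proof is correct, and it rests on the same engine as the paper's --- Vitushkin's localization operator $V_\varphi$, the identity $\overline{\partial}(V_\varphi f)=\varphi\,\overline{\partial}f$, the uniform bound $\|V_\varphi f\|_\infty\le C(\varphi)\|f\|_\infty$, and Liouville's theorem --- but the decomposition you build from it is genuinely different. The paper proves an intermediate extension lemma (Lemma \ref{LemExtension}): for a grid of squares of side $l$ and a subordinate partition of unity $\{\phi_j\}$ it writes $f=\sum_j V_{\phi_j}f$, kills every piece whose square stays away from $\partial U$ by the removability of the compact sets $2Q_j\cap E$, and lets $l\to 0$; applied with $U=\mathbb{C}\setminus F$ this extends $f$ across $E$ first, and the removability of $F$ finishes. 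You instead use only two cutoffs, $\varphi_1\equiv 1$ near $E$ and $\varphi_2=\Phi-\varphi_1$ vanishing near $E$, dispose of $h=V_{\varphi_2}f$ in one stroke by the removability of $F$ (its singular support lies in $F$), and then recover the localization on $E$ by shrinking $\operatorname{supp}\varphi_1$ --- so the roles of the two sets are swapped and no grid is needed. Your route is more economical for the union statement itself; the paper's grid version buys the stronger standalone Lemma \ref{LemExtension} (extension across $E$ inside an arbitrary open set $U$ not assumed to contain $E$), which it reuses for the countable-union remark and for the class $A$ in Sect.\ \ref{sec2}. The only points you leave implicit --- extending $f$ by zero so that $\overline{\partial}f$ is a compactly supported distribution carried by $E\cup F$, and the fact that $E\cup F$ has empty interior so that the extension across $F\setminus E$ inherits the bound on $f$ --- are exactly the ones the paper also glosses over, and are harmless.
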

Note that if $E \cap F = \emptyset$, then the result is a direct consequence of Proposition \ref{propextend}. For the general case, we follow the proof in \cite[Proposition 1.18]{TOL}. First, we need some preliminaries on the Cauchy transform and Vitushkin's localization operator.

Let $\mu$ be a complex Borel measure on $\mathbb{C}$ with compact support. The \textit{Cauchy transform} of $\mu$ is defined by
$$\mathcal{C}\mu(z):=\int \frac{1}{\zeta-z}d\mu(\zeta).$$
An elementary application of Fubini's theorem shows that the above integral converges for almost every $z \in \mathbb{C}$ with respect to Lebesgue measure. Furthermore, $\mathcal{C} \mu$ is holomorphic outside the support of $\mu$ and satisfies $\mathcal{C} \mu(\infty)=0$ and $\mathcal{C} \mu'(\infty):=\lim_{z \to \infty}z(\mathcal{C}\mu(z)-\mathcal{C} \mu(\infty))=-\mu(\mathbb{C})$.

The definition of the Cauchy transform also makes sense if $\mu$ is a compactly supported distribution. In this case, we define
$$\mathcal{C} \mu := -\frac{1}{z} * \mu.$$
The following elementary lemma is well-known, see e.g. \cite[Theorem 18.5.4]{CON}.

\begin{lemma}
\label{CauchyDist}
We have
$$\overline{\partial} \frac{1}{\pi z} = \delta_0$$
in the sense of distributions, where $\delta_0$ is the Dirac delta at the origin. As a consequence, if $\mu$ is a compactly supported distribution on $\mathbb{C}$, then
$$\overline{\partial} (\mathcal{C}\mu) = -\pi \mu.$$
Also, if $f \in L^1_{loc}(\mathbb{C})$ is holomorphic on a neighborhood of infinity and $f(\infty)=0$, then
$$\mathcal{C}(\overline{\partial} f) = -\pi f.$$
\end{lemma}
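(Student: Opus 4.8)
The crux is the first identity, which says that $\frac{1}{\pi z}$ is a fundamental solution of $\overline{\partial}$; the remaining two statements will then follow essentially formally. To prove $\overline{\partial}\frac{1}{\pi z}=\delta_0$, I would fix a test function $\varphi\in C_c^\infty(\mathbb{C})$ and unwind the definition of the distributional derivative to reduce the claim to showing that $-\frac{1}{\pi}\int_{\mathbb{C}}\frac{1}{z}\,\overline{\partial}\varphi\,dA=\varphi(0)$, where $dA$ denotes planar Lebesgue measure. Since $1/z$ is locally integrable in the plane, the integral over $\mathbb{C}$ equals the limit as $\epsilon\to 0$ of the integrals over the annuli $\{\epsilon<|z|<R\}$, with $R$ chosen so large that $\varphi$ vanishes near $|z|=R$. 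On such an annulus $1/z$ is holomorphic, so $\frac{1}{z}\overline{\partial}\varphi=\overline{\partial}(\varphi/z)$ there, and an application of the complex form of Green's/Stokes' theorem converts the area integral into a boundary integral. The outer circle contributes nothing because $\varphi\equiv 0$ there, leaving only the inner circle $|z|=\epsilon$.

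The main computational point is to evaluate this inner boundary term and pass to the limit. Parametrizing by $z=\epsilon e^{i\theta}$ turns $dz/z$ into $i\,d\theta$, so the boundary integral reduces to an average of $\varphi$ over the circle of radius $\epsilon$, which converges to $\varphi(0)$ by continuity as $\epsilon\to 0$. Tracking the constants and the orientation of the boundary then yields exactly $\varphi(0)$, establishing the first identity. This is the step where care is genuinely needed, since it is precisely the boundary term at the singularity that produces the Dirac mass.

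Given the first identity, the second follows by differentiating the convolution. Writing $\mathcal{C}\mu=-\frac{1}{z}*\mu$ and using that $\mu$ has compact support, so that the convolution is defined and the exchange of $\overline{\partial}$ with $*$ is justified, I would compute $\overline{\partial}(\mathcal{C}\mu)=-\bigl(\overline{\partial}\tfrac{1}{z}\bigr)*\mu=-(\pi\delta_0)*\mu=-\pi\mu$, since $\delta_0$ is the identity for convolution.

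For the third statement, note first that since $f$ is holomorphic on a neighborhood of infinity, the distribution $\overline{\partial}f$ vanishes there and is hence compactly supported, so that $\mathcal{C}(\overline{\partial}f)$ is defined. Applying the second identity with $\mu=\overline{\partial}f$ gives $\overline{\partial}\,\mathcal{C}(\overline{\partial}f)=-\pi\,\overline{\partial}f=\overline{\partial}(-\pi f)$, so the difference $g:=\mathcal{C}(\overline{\partial}f)+\pi f$ satisfies $\overline{\partial}g=0$ in the distributional sense. By Weyl's lemma, $g$ agrees with an entire holomorphic function. Finally, both $\mathcal{C}(\overline{\partial}f)$ and $f$ tend to $0$ at infinity, so $g$ is an entire function vanishing at infinity and is therefore identically zero by Liouville's theorem, giving $\mathcal{C}(\overline{\partial}f)=-\pi f$. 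The only delicate points here are the justification that $\overline{\partial}f$ is compactly supported and the use of Weyl's lemma to upgrade the distributional solution to a genuine holomorphic function.
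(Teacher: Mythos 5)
Your proof is correct: the Green's/Stokes' theorem computation on the punctured annulus does produce the boundary term $-\pi\varphi(0)$ for $\int \overline{\partial}\varphi/z\,dA$, and the constants match; the convolution identity and the Weyl--Liouville argument for the third claim are also sound. The paper itself gives no proof of this lemma, merely citing Conway, and your argument is essentially the standard one found in that reference, so there is nothing to compare beyond noting that you have supplied the details the paper omits.
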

Now, given $f \in L^1_{loc}(\mathbb{C})$ and $\phi \in C_c^{\infty}(\mathbb{C})$, we define \textit{Vitushkin's localization operator} $V_\phi$ by
$$V_\phi f:= \phi f + \frac{1}{\pi}\mathcal{C}(f \overline{\partial}\phi).$$
The same definition holds more generally if $f$ is a distribution.

\begin{lemma}
\label{VitLocOp1}
Let $f \in L^1_{loc}(\mathbb{C})$ and $\phi \in C_c^{\infty}(\mathbb{C})$. Then
$$V_\phi f = -\frac{1}{\pi} \mathcal{C} (\phi \overline{\partial}f)$$
in the sense of distributions.
\end{lemma}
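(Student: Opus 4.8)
The plan is to reduce the whole statement to the third identity in Lemma~\ref{CauchyDist} by recognizing the right-hand side as the Cauchy transform of $\overline{\partial}(\phi f)$. First I would expand the definition $V_\phi f = \phi f + \frac{1}{\pi}\mathcal{C}(f\,\overline{\partial}\phi)$ and, using the linearity of the Cauchy transform, rewrite the desired identity $V_\phi f = -\frac{1}{\pi}\mathcal{C}(\phi\,\overline{\partial}f)$ in the equivalent form
$$\phi f = -\frac{1}{\pi}\,\mathcal{C}\big(\phi\,\overline{\partial}f + f\,\overline{\partial}\phi\big),$$
so that the claim becomes a statement about the single distribution $\phi\,\overline{\partial}f + f\,\overline{\partial}\phi$.

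The key observation is that this distribution is exactly $\overline{\partial}(\phi f)$, by the Leibniz rule for the $\overline{\partial}$ operator. Since $\phi \in C_c^{\infty}(\mathbb{C})$ is smooth, the product rule $\overline{\partial}(\phi f) = \phi\,\overline{\partial}f + f\,\overline{\partial}\phi$ holds in the sense of distributions; I would verify this by pairing both sides against an arbitrary test function and integrating by parts, using that $\phi\psi \in C_c^{\infty}(\mathbb{C})$ whenever $\psi$ is a test function. Because $\phi$ has compact support, the product $\phi f$ lies in $L^1(\mathbb{C})$, has compact support, and in particular vanishes identically in a neighborhood of infinity. Hence $\phi f$ satisfies the hypotheses of the third statement of Lemma~\ref{CauchyDist}.

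Applying that statement to $g := \phi f$ then yields $\mathcal{C}(\overline{\partial}(\phi f)) = -\pi\,\phi f$, which upon dividing by $-\pi$ is precisely the rewritten identity above, completing the argument. The only points requiring care are bookkeeping ones: checking that each of $\phi\,\overline{\partial}f$, $f\,\overline{\partial}\phi$, and $\overline{\partial}(\phi f)$ is a well-defined compactly supported distribution so that its Cauchy transform makes sense, and confirming that distributional differentiation and the convolution defining $\mathcal{C}$ interact as expected. I expect the main (and only minor) obstacle to be the justification of the Leibniz rule at the level of distributions rather than of smooth functions, but this follows directly from the definition of distributional differentiation once $\phi$ is known to be smooth.
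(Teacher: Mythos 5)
Your argument is correct. It uses the same two ingredients as the paper --- the Leibniz rule $\overline{\partial}(\phi f)=\phi\,\overline{\partial}f+f\,\overline{\partial}\phi$ and Lemma~\ref{CauchyDist} --- but organizes them in the opposite direction. The paper applies $\overline{\partial}$ to both sides of the claimed identity, checks via Lemma~\ref{CauchyDist} that each side has $\overline{\partial}$-derivative equal to $\phi\,\overline{\partial}f$, and then pins down the remaining additive ambiguity by noting that both distributions are holomorphic near $\infty$ and vanish there. You instead move $\frac{1}{\pi}\mathcal{C}(f\,\overline{\partial}\phi)$ to the other side, recognize the combined argument of $\mathcal{C}$ as $\overline{\partial}(\phi f)$, and apply the inversion formula $\mathcal{C}(\overline{\partial}g)=-\pi g$ once, to $g=\phi f$; the hypotheses of that formula are satisfied trivially because $\phi f$ is a compactly supported $L^1$ function, hence identically zero near $\infty$. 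Your route is slightly more economical: it avoids having to verify separately that $V_\phi f$ and $\mathcal{C}(\phi\,\overline{\partial}f)$ are holomorphic near infinity with value zero there, and it needs no uniqueness step. What the paper's route buys in exchange is the explicit intermediate identity $\overline{\partial}(V_\phi f)=\phi\,\overline{\partial}f$, which is exactly what gets reused in the proof of Lemma~\ref{LemExtension} to localize the support of $\overline{\partial}(V_{\phi_j}f)$; with your proof one would still want to record that computation separately. All the bookkeeping points you flag (that $\phi\,\overline{\partial}f$, $f\,\overline{\partial}\phi$ and $\overline{\partial}(\phi f)$ are compactly supported distributions, that $\mathcal{C}$ is linear on such distributions, and that the Leibniz rule holds for a smooth function times a distribution) are routine and check out.
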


\begin{proof}
By Lemma \ref{CauchyDist}, we have
$$\overline{\partial} (V_\phi f) = f \overline{\partial} \phi + \phi \overline{\partial}f + \frac{1}{\pi} \overline{\partial} (\mathcal{C}(f \overline{\partial} \phi)) = \phi \overline{\partial} f = \overline{\partial} \left(-\frac{1}{\pi} \mathcal{C} (\phi \overline{\partial} f)\right).$$
But both $V_\phi f$ and $-\frac{1}{\pi} \mathcal{C} (\phi \overline{\partial} f)$ are holomorphic in a neighborhood of $\infty$ and vanish at that point, hence these two distributions must be equal, again by Lemma \ref{CauchyDist}.

\end{proof}




\begin{lemma}
\label{LemExtension}
Let $U \subset \mathbb{C}$ be open and let $E \subset \mathbb{C}$ be $H^{\infty}$-removable and compact. Then every bounded holomorphic function on $U \setminus E$ has an analytic extension to an element of $H^{\infty}(U)$.
\end{lemma}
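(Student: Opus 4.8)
The plan is to localize the extension problem with Vitushkin's operator and then reduce it, point by point, to the removability of a \emph{compact} subset of $E$, which is available by monotonicity. The key point to keep in mind is that here $U$ need not contain $E$, so Proposition \ref{propextend} does not apply directly; instead one must work locally near each point of the ``bad set'' $E \cap U$. First I would record that an $H^{\infty}$-removable set has zero area: otherwise $\mathcal{C}(\mathbf{1}_E\,dA)$ would be a bounded function on $\Omega$ with $\overline{\partial}\,\mathcal{C}(\mathbf{1}_E\,dA) = -\pi \mathbf{1}_E \neq 0$, hence a nonconstant element of $H^{\infty}(\Omega)$, contradicting removability. In particular, since $f$ is bounded on $U \setminus E$, it defines an element of $L^{\infty}(U) \subset L^1_{loc}(U)$, so all the distributional manipulations below make sense.

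Fix a point $p \in E \cap U$ and choose $\phi \in C_c^{\infty}(\mathbb{C})$ with $\operatorname{supp}\phi \subset U$ and $\phi \equiv 1$ on a neighborhood $V$ of $p$. Set $K := E \cap \operatorname{supp}\phi$, a compact subset of $E$, hence $H^{\infty}$-removable by monotonicity. I would then consider $V_\phi f = \phi f + \frac{1}{\pi}\mathcal{C}(f\overline{\partial}\phi)$: since $\operatorname{supp}\overline{\partial}\phi \subset \operatorname{supp}\phi \subset U$, both terms involve only $f|_{\operatorname{supp}\phi}$ and are bounded (the Cauchy transform of a bounded compactly supported density being bounded and continuous). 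By the computation underlying Lemma \ref{VitLocOp1}, one has $\overline{\partial}(V_\phi f) = \phi\,\overline{\partial}f$ in the sense of distributions, and because $f$ is genuinely holomorphic on $U \setminus E$ the distribution $\overline{\partial}f$ is supported on $E \cap U$; thus $\phi\,\overline{\partial}f$ is supported on $K$. Consequently $V_\phi f$ is bounded, holomorphic on $\mathbb{C}_\infty \setminus K$, and vanishes at $\infty$.

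Now removability of $K$ finishes the local step: by definition the bounded holomorphic function $V_\phi f$ on $\mathbb{C}_\infty \setminus K$ must be constant, and since it vanishes at $\infty$ it is identically zero. Hence $\phi f = -\frac{1}{\pi}\mathcal{C}(f\overline{\partial}\phi)$ almost everywhere, and on $V$, where $\phi \equiv 1$, we get $f = -\frac{1}{\pi}\mathcal{C}(f\overline{\partial}\phi)$. Since $f\overline{\partial}\phi$ vanishes on $V$, its Cauchy transform is holomorphic there, so the right-hand side provides a bounded holomorphic extension of $f$ across $E \cap V$, near $p$. Carrying this out at every $p \in E \cap U$ and patching with $f$ on $U \setminus E$ (on which all these local extensions agree, $U \setminus E$ being dense) yields a holomorphic extension of $f$ to all of $U$; it is bounded because it agrees with the bounded function $f$ off the area-zero set $E$. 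I expect the only delicate points to be the bookkeeping that keeps every object supported inside $U$ (so that $f$, defined only on $U \setminus E$, suffices) and the justification of the distributional identities, both of which hinge on $E$ having zero area.
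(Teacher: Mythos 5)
Your proof is correct, and it rests on the same engine as the paper's: Vitushkin's localization operator together with the observation that $V_\phi f$ is bounded, holomorphic off the compact removable set $E\cap\operatorname{supp}\phi$, and vanishes at infinity, hence is identically zero. The difference is in the decomposition. The paper covers the plane by a grid of squares of side $l$, writes $f=\sum_j V_{\phi_j}f$ over a partition of unity, kills every term whose doubled square misses $\partial U$, and then lets $l\to 0$ to conclude holomorphy on all of $U$; you instead take a single bump $\phi\equiv 1$ near a given point $p\in E\cap U$ with $\operatorname{supp}\phi\subset U$, deduce the explicit local formula $f=-\frac{1}{\pi}\mathcal{C}(f\,\overline{\partial}\phi)$ on the set where $\phi\equiv 1$, and patch the resulting local extensions (which agree on overlaps since $U\setminus E$ is dense). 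Your version avoids the grid-refinement limit and produces a concrete Cauchy-integral representation of the extension; the price is the preliminary observation that $H^{\infty}$-removable sets have zero area, which you need so that $f$ is an honest element of $L^1_{loc}(U)$ and the distributional identities make sense. That observation is correct as you prove it (via the Cauchy transform of $\mathbf{1}_E\,dA$), whereas the paper sidesteps the issue by simply declaring $f=0$ on $E$ and invoking only the empty interior of $E$ for boundedness at the very end. Both arguments are sound.
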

Note that here it is not assumed that $E$ is contained in $U$ (compare with Proposition \ref{propextend}).

\begin{proof}
Assume without loss of generality that $U$ is bounded. Consider a grid of squares $\{Q_j\}$ covering the plane and of side length $l$. Let $\{\phi_j\} \subset C_c^{\infty}(\mathbb{C})$ be a partition of unity subordinated to $\{2Q_j\}$, i.e. $0 \leq \phi_j \leq 1$, $\operatorname{supp}(\phi_j) \subset 2Q_j$ for each $j$ and
$$\sum_j \phi_j \equiv 1$$
on $\mathbb{C}$. Define $f$ to be zero on $\mathbb{C} \setminus (U \setminus E)$. Then $V_{\phi_j} f$ is identically zero except for finitely many $j$'s and
$$f=-\frac{1}{\pi} \mathcal{C}(\overline{\partial} f)=-\frac{1}{\pi} \sum_j \mathcal{C}(\phi_j \overline{\partial}f) = \sum_j V_{\phi_j}f,$$
where we used Lemma \ref{CauchyDist} and Lemma \ref{VitLocOp1}. Also, for each $j$, we have
$$\operatorname{supp} (\overline{\partial}(V_{\phi_j}f)) = \operatorname{supp}(\overline{\partial}(\mathcal{C}(\phi_j \overline{\partial}f))) \subset \operatorname{supp} \phi_j \cap \operatorname{supp}\overline{\partial}f \subset 2Q_j \cap (E \cup \partial U).$$
Hence $V_{\phi_j} f$ is holomorphic outside $2Q_j \cap E$ whenever $j$ is such that $2Q_j \cap \partial U = \emptyset$. Now, note that for every $j$, $2Q_j \cap E$ is $H^{\infty}$-removable. Moreover, a simple estimate shows that $V_{\phi_j}f$ is bounded. It follows that for each $j$ such that $2Q_j \cap \partial U = \emptyset$, the function $V_{\phi_j}f$ must be identically zero, since it vanishes at infinity. Therefore,
$$f=\sum_{j: 2Q_j \cap \partial U \neq \emptyset} V_{\phi_j}f,$$
so that $f$ is holomorphic on $U$ except maybe in a $4l$-neighborhood of $\partial U$. Since $l$ is arbitrary, we obtain that $f$ is holomorphic on the whole open set $U$. Finally, it must be bounded since $E$ has empty interior.
\end{proof}
We can now proceed with the proof of Proposition \ref{union}.

\begin{proof}
Suppose that $E$ and $F$ are $H^{\infty}$-removable compact sets. Let $f$ be any bounded holomorphic function on $\mathbb{C} \setminus (E \cup F)=(\mathbb{C} \setminus F)\setminus E$. By Lemma \ref{LemExtension}, the function $f$ has a bounded analytic extension to $\mathbb{C} \setminus F$. Hence $f$ must be constant, by $H^{\infty}$-removability of $F$. This shows that $E \cup F$ is $H^{\infty}$-removable.

\end{proof}

\begin{remark}
A simple argument using Lemma \ref{LemExtension} and the Baire category theorem shows that any compact countable union of $H^{\infty}$-removable compact sets is $H^{\infty}$-removable.
\end{remark}

\begin{remark}
One can prove Proposition \ref{union} by exploiting the fact that $H^{\infty}$-removable compact sets are totally disconnected, as in the proof of Proposition \ref{unionS}. However, the main advantage of the above proof using Vitushkin's localization operator is that it is easily generalized to handle $A$-removable sets, as we will see in Sect. \ref{sec2}.
\end{remark}

\subsection{Relationship with Hausdorff measure}

In this subsection, we study $H^{\infty}$-removability from the point of view of Hausdorff measure and dimension. More precisely, we shall see that compact sets of dimension less than one are removable, whereas those of dimension bigger than one are not.

First, let us recall the definitions of Hausdorff measure and Hausdorff dimension in the plane. Let $F$ be a subset of the plane. For $s \geq 0$ and $0<\delta \leq \infty$, we define
$$\mathcal{H}_{\delta}^{s}(F):= \inf \left\{ \sum_j \operatorname{diam}(F_j)^s : F \subset \bigcup_j F_j, F_j \subset \mathbb{C}, \operatorname{diam}(F_j) \leq \delta \right\}.$$
The $s$-\textit{dimensional Hausdorff measure} of $F$ is
$$\mathcal{H}^s(F) := \sup_{\delta>0} \mathcal{H}_{\delta}^s(F) = \lim_{\delta \to 0} \mathcal{H}_{\delta}^s(F).$$
The \textit{Hausdorff dimension} of $F$ is the unique positive number $\operatorname{dim}_\mathcal{H}(F)$ such that

\begin{displaymath}
\mathcal{H}^{s}(F) = \left\{ \begin{array}{ll}
\infty & \textrm{if $s < \dim_{\mathcal{H}}(F) $}\\
0 & \textrm{if $s>\dim_{\mathcal{H}}(F)$}.\\
\end{array} \right.
\end{displaymath}
The following result is generally attributed to Painlev\'e.

\begin{theorem}[Painlev\'e]
\label{ThmPainleve}
If $\mathcal{H}^{1}(E)=0$, then $E$ is $H^{\infty}$-removable.
\end{theorem}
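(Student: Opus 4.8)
The plan is to prove the statement directly, by representing an arbitrary $f \in H^{\infty}(\Omega)$ as a Cauchy integral over a contour of arbitrarily small length surrounding $E$, and then estimating. First I would record the covering input supplied by the hypothesis: since $\mathcal{H}^{1}(E)=0$ we have $\mathcal{H}^{1}_{\infty}(E)=0$, so for every $\epsilon>0$ there is a countable cover of $E$ by sets of arbitrarily small total diameter. Enlarging each covering set to an open disk centered at a point of $E$ of comparable diameter, and using that $E$ is compact, I would extract a finite subcover by open disks $D_1,\dots,D_n$ with $\sum_i \operatorname{diam}(D_i)<\epsilon$ and with every $D_i$ lying within distance $\epsilon$ of $E$.

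Set $M:=\sup_{\Omega}|f|$ and $a_0:=f(\infty)$, and fix a finite point $z_0\in\Omega$ with $d:=\operatorname{dist}(z_0,E)>0$. For $\epsilon<d/2$ the point $z_0$ lies outside $\overline{V}$, where $V:=\bigcup_i D_i$, and the boundary $\Gamma:=\partial V$ is a rectifiable cycle contained in $\bigcup_i \partial D_i$, of total length at most $\pi\sum_i\operatorname{diam}(D_i)<\pi\epsilon$, winding once around $E$. Applying Cauchy's integral formula on the region $\mathbb{C}_\infty\setminus\overline{V}$, which contains both $z_0$ and $\infty$ and on which $f$ is holomorphic (letting the radius of an outer circle tend to infinity contributes exactly $f(\infty)$), I obtain
$$f(z_0)-a_0 = -\frac{1}{2\pi i}\int_{\Gamma}\frac{f(\zeta)}{\zeta-z_0}\,d\zeta.$$
Since $|\zeta-z_0|\ge d/2$ for $\zeta\in\Gamma$, the standard length estimate gives $|f(z_0)-a_0|\le \frac{M}{2\pi}\cdot\frac{2}{d}\cdot\pi\epsilon=\frac{M\epsilon}{d}$. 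Letting $\epsilon\to0$ yields $f(z_0)=a_0$, and as $z_0$ was an arbitrary finite point of $\Omega$, the function $f$ is constant, which is exactly $H^{\infty}$-removability.

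The routine part is the final estimate; the step requiring the most care is the construction of the contour $\Gamma$ and the justification of Cauchy's formula over it. The boundary of a finite union of disks is only piecewise circular and may exhibit tangencies, so I would either perturb the radii slightly so that $\partial V$ becomes a disjoint finite union of rectifiable Jordan curves, or simply invoke the fact that Cauchy's theorem and integral formula remain valid over such a rectifiable cycle. Alternatively, one can replace the disks by the squares of a fine grid that meet $E$, exactly as in the proof of Lemma \ref{LemExtension}, which produces a polygonal cycle and sidesteps the tangency issue entirely while leaving the length bound intact. Everything else, namely the passage to $f(\infty)$ on the outer circle and the limit $\epsilon\to0$, is elementary.
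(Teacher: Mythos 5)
Your proposal is correct and follows essentially the same route as the paper: cover $E$ by disks of small total diameter and apply the $ML$-estimate to a Cauchy integral over the outer boundary of their union. The only (harmless) variation is that you estimate $f(z_0)-f(\infty)$ directly via the Cauchy integral formula for the exterior region, whereas the paper estimates $f'(\infty)$ and then deduces constancy by applying the vanishing of the derivative at infinity to the auxiliary function $z\mapsto (f(z)-f(z_0))/(z-z_0)$; both versions rest on exactly the same length bound for the contour.
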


\begin{proof}
Let $f$ be any bounded holomorphic function on $\Omega$, say $|f| \leq M$. Let $\epsilon>0$. Since $\mathcal{H}^{1}(E)=0$, we can cover $E$ by open disks $D_1,\dots,D_n$ of radius $r_1,\dots, r_n$ respectively such that
$$\sum_{j=1}^n r_j < \epsilon.$$
Let $\Gamma$ be the outer boundary of the union of these disks. Then we have
$$|f'(\infty)|= \left| \frac{1}{2\pi i} \int_{\Gamma}f(z) dz \right| \leq M \sum_{j=1}^n r_j < M\epsilon,$$
where $f'(\infty):=\lim_{z \to \infty}z(f(z)-f(\infty))$. As $\epsilon>0$ is arbitrary, this shows that $f'(\infty)=0$, for every bounded holomorphic function $f$ on $\Omega$. It follows that $E$ is $H^{\infty}$-removable. Indeed, if $f$ is a nonconstant bounded holomorphic function on $\Omega$, then there is a point $z_0 \in \Omega$ with $f(z_0) \neq f(\infty)$, but then the function
$$z \mapsto \frac{f(z)-f(z_0)}{z-z_0}$$
is a bounded holomorphic function on $\Omega$ whose derivative at infinity is nonzero.

\end{proof}

\begin{corollary}
\label{corPainleve}
If $\dim_{\mathcal{H}}(E)<1$, then $E$ is $H^{\infty}$-removable.
\end{corollary}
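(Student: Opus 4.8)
The plan is to deduce Corollary \ref{corPainleve} directly from Theorem \ref{ThmPainleve} using the definition of Hausdorff dimension. The essential observation is that the hypothesis $\dim_{\mathcal{H}}(E)<1$ is precisely designed to guarantee that the one-dimensional Hausdorff measure of $E$ vanishes, at which point Painlev\'e's theorem applies verbatim.

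First I would recall the defining property of Hausdorff dimension stated in the excerpt: the number $\dim_{\mathcal{H}}(E)$ is the unique value such that $\mathcal{H}^s(E)=\infty$ for $s<\dim_{\mathcal{H}}(E)$ and $\mathcal{H}^s(E)=0$ for $s>\dim_{\mathcal{H}}(E)$. Now suppose $\dim_{\mathcal{H}}(E)<1$. Then $s=1$ satisfies $s>\dim_{\mathcal{H}}(E)$, and so by the second clause of the definition we immediately obtain $\mathcal{H}^{1}(E)=0$. This is the entire content of the argument: the inequality on dimension places $s=1$ strictly above the critical threshold, forcing the measure to be zero.

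With $\mathcal{H}^{1}(E)=0$ established, I would simply invoke Theorem \ref{ThmPainleve}, which asserts that any compact set of vanishing one-dimensional Hausdorff measure is $H^{\infty}$-removable. This concludes the proof.

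There is essentially no obstacle here; the corollary is a formal consequence of the theorem and the bookkeeping built into the definition of dimension. The only point requiring the slightest care is that the definition as stated handles $s>\dim_{\mathcal{H}}(E)$ rather than $s=\dim_{\mathcal{H}}(E)$, so one must note that the strict inequality $\dim_{\mathcal{H}}(E)<1$ is exactly what lets us take $s=1$ in the regime where the measure is declared to be zero. Had the hypothesis been merely $\dim_{\mathcal{H}}(E)\leq 1$, the conclusion could fail, since at the critical dimension itself the measure need not vanish.
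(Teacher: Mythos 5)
Your proposal is correct and is exactly the intended argument: the paper states this as an immediate corollary of Theorem \ref{ThmPainleve} without proof, since $\dim_{\mathcal{H}}(E)<1$ forces $\mathcal{H}^1(E)=0$ by the defining property of Hausdorff dimension. Your remark that strictness of the inequality is essential (the critical dimension $s=1$ need not have vanishing measure) is also consistent with the paper's later discussion of Vitushkin's and Garnett's examples.
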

It is natural to ask whether the converse of Theorem \ref{ThmPainleve} holds, as this would yield an elegant solution to Painlev\'e's problem. Unfortunately, there are examples of $H^{\infty}$-removable compact sets with positive one-dimensional Hausdorff measure. The first one was given by Vitushkin \cite{VIT} in 1959. It was later observed by Garnett \cite{GAR2} that the planar Cantor quarter set is another example. See also \cite[Chapter IV]{GAR} for more information.

On the other hand, linear compact sets are $H^{\infty}$-removable precisely when they have zero length.

\begin{proposition}
Suppose that $E$ is contained in the real line. Then $E$ is $H^{\infty}$-removable if and only if its one-dimensional Lebesgue measure is zero.
\end{proposition}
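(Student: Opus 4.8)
The plan is to treat the two implications separately. For the forward direction, I would simply note that for a subset of the real line the one-dimensional Hausdorff measure $\mathcal{H}^1$ agrees with linear (one-dimensional Lebesgue) measure, which follows directly from the definition of $\mathcal{H}^1$ via diameters of covers. Hence if the Lebesgue measure of $E$ vanishes, then $\mathcal{H}^1(E)=0$, and Theorem \ref{ThmPainleve} immediately yields that $E$ is $H^\infty$-removable.

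For the converse, assume that $E$ has positive Lebesgue measure; the goal is to exhibit a nonconstant bounded holomorphic function on $\Omega$. I would take the Cauchy transform of linear measure on $E$: set $\mu:=\chi_E\,dt$, a finite positive measure supported on $E\subset\mathbb{R}$, and let $f:=\mathcal{C}\mu$, so that $f(z)=\int_E (t-z)^{-1}\,dt$ is holomorphic on $\mathbb{C}\setminus E$ with $f(\infty)=0$. The key step is to confine $f$ to the horizontal strip $\{\,w:|\operatorname{Im} w|<\pi\,\}$. Writing $z=x+iy$ with $y\neq 0$, one computes $\operatorname{Im} f(z)=y\int_E ((t-x)^2+y^2)^{-1}\,dt$; for $y>0$ this is positive and, since $E\neq\mathbb{R}$, strictly less than $y\int_{\mathbb{R}}((t-x)^2+y^2)^{-1}\,dt=\pi$. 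The case $y<0$ is symmetric via the reflection identity $f(\bar z)=\overline{f(z)}$, and $f$ is real on $\mathbb{R}\setminus E$, so indeed $f(\Omega)$ lies in the strip.

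Once $f$ is trapped in the strip, which is a simply connected proper subdomain of $\mathbb{C}$, I would postcompose with a conformal map $\phi$ of the strip onto the unit disk and set $g:=\phi\circ f$, an element of $H^\infty(\Omega)$. Since $(\mathcal{C}\mu)'(\infty)=-\mu(\mathbb{C})$ equals minus the Lebesgue measure of $E$ and is therefore nonzero, $f$ is nonconstant, and since $\phi$ is injective $g$ is nonconstant as well; this shows that $E$ is not $H^\infty$-removable.

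I expect the strip confinement to be the crucial idea and the main obstacle. The natural first attempt, using $f$ itself as a bounded witness, fails because the real part of $f$ is generally unbounded --- already for an interval it blows up logarithmically near the endpoints. The point to recognize is that only the imaginary part of the Cauchy transform of a positive measure on a line is controlled (it is essentially a Poisson integral), and that this control, combined with the reflection symmetry, is exactly enough to force $f$ into a hyperbolic domain, after which any conformal uniformization onto the disk produces the desired bounded function.
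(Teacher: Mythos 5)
Your argument is correct and follows essentially the same route as the paper: the easy direction via Painlev\'e's theorem, and for the converse the Cauchy transform $f(z)=\int_E (t-z)^{-1}\,dt$ together with the observation that $\operatorname{Im} f$ is bounded (the Poisson-kernel estimate). The only cosmetic difference is that the paper converts $f$ into a bounded function by taking $e^{if}$ rather than postcomposing with a conformal map of the strip onto the disk; the two devices are interchangeable.
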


\begin{proof}
If the measure of $E$ is zero, then $E$ must be $H^{\infty}$-removable by Theorem \ref{ThmPainleve}. Conversely, if the measure of $E$ is positive, consider the function
$$f(z):=\int_{E} \frac{dt}{t-z} \qquad (z \in \Omega),$$
which is just the Cauchy transform of the characteristic function of $E$ times the Lebesgue measure on $\mathbb{R}$.
Then $f$ is holomorphic on $\Omega$, $f(\infty)=0$ and $f'(\infty)<0$, hence $f$ is nonconstant. Moreover, a simple calculation shows that the imaginary part of $f$ is bounded on $\Omega$, thus $g:=e^{if}$ is a nonconstant bounded holomorphic function on $\Omega$. It follows that $E$ is not $H^{\infty}$-removable.
\end{proof}
Corollary \ref{corPainleve} shows that compact sets of sufficiently small dimension are removable. On the other hand, the following theorem states that compact sets of large enough dimension cannot be removable.
\begin{theorem}
\label{ThmFrostman}
If $\dim_{\mathcal{H}}(E)>1$, then $E$ is not $H^{\infty}$-removable.
\end{theorem}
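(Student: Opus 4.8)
The plan is to exploit Frostman's lemma to produce a nonzero positive measure on $E$ with controlled growth, and then to show that its Cauchy transform is a nonconstant bounded holomorphic function on $\Omega$. Since $\dim_{\mathcal{H}}(E)>1$, I would first fix some $s$ with $1<s<\dim_{\mathcal{H}}(E)$, so that $\mathcal{H}^s(E)=\infty$; in particular $\mathcal{H}^s(E)>0$. Frostman's lemma then provides a nonzero positive Borel measure $\mu$ supported on $E$ together with a constant $C>0$ such that $\mu(D(x,r)) \leq C r^s$ for every $x \in \mathbb{C}$ and every $r>0$.

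Next I would consider the Cauchy transform $f:=\mathcal{C}\mu$. Because $\mu$ is supported on $E$, the function $f$ is holomorphic on $\Omega=\mathbb{C}_\infty \setminus E$, with $f(\infty)=0$ and $f'(\infty)=-\mu(\mathbb{C})=-\mu(E) \neq 0$. In particular $f$ is nonconstant. It therefore remains only to verify that $f$ is bounded, since then $f$ is a nonconstant element of $H^{\infty}(\Omega)$ and $E$ fails to be $H^{\infty}$-removable.

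The boundedness is the technical heart of the argument, and it is precisely here that the growth condition and the hypothesis $s>1$ enter. To estimate $|f(z)| \leq \int |\zeta-z|^{-1}\,d\mu(\zeta)$ uniformly in $z$, I would decompose the domain of integration into the dyadic annuli $A_k:=\{\zeta : 2^{-k-1}<|\zeta-z| \leq 2^{-k}\}$ for $k \geq 0$, together with the region $\{|\zeta-z|>1\}$. On $A_k$ one has $|\zeta-z|^{-1} \leq 2^{k+1}$ and, by the Frostman estimate, $\mu(A_k) \leq \mu(D(z,2^{-k})) \leq C 2^{-ks}$, so that each annular contribution is at most $2C\,2^{k(1-s)}$. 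Since $s>1$, the series $\sum_{k \geq 0} 2^{k(1-s)}$ converges, and the contribution of $\{|\zeta-z|>1\}$ is trivially bounded by $\mu(E)$. Summing these estimates yields a bound on $|f(z)|$ independent of $z$, which completes the proof.

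The main obstacle is really the input of Frostman's lemma, which is what converts the dimension hypothesis into a usable measure-theoretic growth bound; once that is in hand, the dyadic estimate is routine, and the convergence of the geometric series is exactly the point at which the strict inequality $s>1$ is used. It is worth noting that this argument produces only a \emph{bounded} nonconstant function rather than a \emph{continuous} one, which is why the analogous statement for $A$-removability will require a separate treatment.
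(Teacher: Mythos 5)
Your proof is correct, and it shares the paper's skeleton -- Frostman's lemma converts the dimension hypothesis into a nonzero measure $\mu$ on $E$ with growth $\mu(\mathbb{D}(x,r))\leq Cr^s$ for some $s>1$, and the Cauchy transform $\mathcal{C}\mu$ is the candidate nonconstant function -- but the decisive estimate is carried out differently. You prove only that $\mathcal{C}\mu$ is \emph{bounded}, via the dyadic annuli $\{2^{-k-1}<|\zeta-z|\leq 2^{-k}\}$ and the convergence of $\sum_k 2^{k(1-s)}$; the paper instead estimates $|f(z)-f(w)|$ directly, splitting the integral over four regions adapted to the pair $z,w$, and concludes that $\mathcal{C}\mu$ is H\"older continuous on $\Omega$ and hence extends continuously to all of $\mathbb{C}_\infty$. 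Your route is shorter and is exactly the ``easier'' direct argument the paper's own remark alludes to (it is the proof in Tolsa's book), and it fully suffices for the theorem as stated. What the paper's extra work buys is the stronger conclusion that $E$ is not even $A$-removable, a fact it invokes verbatim in the section on $A$-removable sets (``the proof of Theorem \ref{ThmFrostman} shows that compact sets of Hausdorff dimension strictly bigger than one are never $A$-removable''); with your version of the proof that later claim would need a separate argument. You anticipate precisely this trade-off in your closing sentence, which is the right observation.
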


\begin{proof}
By Frostman's lemma (see e.g. \cite[Theorem 1.23]{TOL}), there exists a nontrivial positive Borel measure $\mu$ supported on $E$ with growth $\mu(\mathbb{D}(z_0,r)) \leq r^s$ for all $z_0 \in \mathbb{C}, r>0$, for some $1<s<2$. Consider the function
$$f(z):= \mathcal{C}\mu(z)= \int \frac{d\mu}{\zeta-z} \qquad (z \in \Omega).$$
Then $f$ is a nonconstant holomorphic function on $\Omega$. Let us prove that the growth property of $\mu$ implies that $f$ is H\"{o}lder continuous on $\Omega$. Fix $z,w \in \Omega$ and write $\delta:=|z-w|$. Then
$$|f(z)-f(w)| \leq \delta \int \frac{d\mu(\zeta)}{|\zeta-z||\zeta-w|}.$$
We separate the integral over the four disjoint sets :

\begin{eqnarray*}
A_1 &:=& \{\zeta \in E : |\zeta-z|<\delta/2\}\\
A_2 &:=& \{\zeta \in E : |\zeta-w|<\delta/2\}\\
A_3 &:=& \{\zeta \in E: |\zeta-z| \leq |\zeta-w|, |\zeta-z| \geq \delta/2\}\\
A_4 &:=& \{\zeta \in E: |\zeta-z| > |\zeta-w|, |\zeta-w| \geq \delta/2\}.\\
\end{eqnarray*}
On $A_1$, $|\zeta-w|> \delta/2$, so we have
\begin{eqnarray*}
\delta \int_{A_1} \frac{d\mu(\zeta)}{|\zeta-z||\zeta-w|} &\leq& 2 \int_{A_1} \int_{|\zeta-z|}^\infty t^{-2}dt \, d\mu(\zeta)\\
&=& 2\int_{A_1} \int_{|\zeta-z|}^{\delta/2} t^{-2}dt \, d\mu(\zeta) + 2\int_{A_1} \int_{\delta/2}^{\infty} t^{-2}dt \, d\mu(\zeta)\\
&\leq& 2 \int_{0}^{\delta/2} \mu(\mathbb{D}(z,t))t^{-2} dt + 4\delta^{-1} \mu(A_1)\\
&\leq& 2 \int_{0}^{\delta/2} t^{s-2} dt + 4\delta^{-1}\delta^{s}2^{-s}\\
&=& C\delta^{s-1},\\
\end{eqnarray*}
where $C$ is independent of $\delta$.
Similarly for the integral over $A_2$. For the integral over $A_3$, we have
\begin{eqnarray*}
\delta \int_{A_3} \frac{d\mu(\zeta)}{|\zeta-z||\zeta-w|} &\leq& \delta \int_{\{|\zeta-z|\geq \delta/2\}} \frac{d\mu(\zeta)}{|\zeta-z|^2}\\
&=& 2\delta\int_{\delta/2}^\infty \mu(\{\delta/2 \leq |\zeta-z|<t\}) t^{-3} dt\\
&\leq& 2\delta\int_{\delta/2}^\infty t^{s-3} dt\\
&=& C' \delta^{s-1}.
\end{eqnarray*}
Here we used the fact that $s<2$. Similarly for the integral over $A_4$.

Combining the integrals together shows that $f$ is H\"{o}lder continuous on $\Omega$ and therefore extends continuously to a bounded function on $\mathbb{C}_\infty$. In particular, $f$ is a nonconstant bounded holomorphic function on $\Omega$ and $E$ is not $H^{\infty}$-removable.
\end{proof}

\begin{remark}
It is easier to prove directly that $f$ is bounded, as in \cite[Theorem 1.25]{TOL}. However, the fact that $f$ extends continuously to the whole Riemann sphere shows that $E$ is not even removable for the class $A$.
\end{remark}

\subsection{The case of dimension one}

We proved in the preceding subsection that compact sets of dimension less than one are $H^{\infty}$-removable whereas sets of dimension bigger than one are not. Painlev\'e's problem is thus reduced to the case of dimension exactly equal to one. In this case, however, the situation is much more complicated and we shall therefore content ourselves with a brief overview of the main results, without giving any proof. For the complete story, we refer the reader to \cite{TOL}.

One of the major advances toward a geometric characterization of $H^{\infty}$-removable sets of dimension one was the proof of the so-called \textit{Vitushkin's conjecture}.

\begin{theorem}[Vitushkin's conjecture]
\label{VitConj}
Assume that $\mathcal{H}^1(E)<\infty$. Then $E$ is $H^{\infty}$-removable if and only if $\mathcal{H}^1(E \cap \Gamma)=0$ for all rectifiable curves $\Gamma$.
\end{theorem}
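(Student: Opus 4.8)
The plan is to prove the two implications separately, after translating removability into analytic capacity via the standard equivalence that $E$ is $H^{\infty}$-removable if and only if its analytic capacity $\gamma(E)$ vanishes. I would first dispatch the reverse implication, namely that $E$ is non-removable whenever $\mathcal{H}^1(E \cap \Gamma)>0$ for some rectifiable curve $\Gamma$. Setting $K := E \cap \Gamma$, which satisfies $0<\mathcal{H}^1(K)<\infty$ and is rectifiable, this reduces precisely to \emph{Denjoy's conjecture}: a compact subset of a rectifiable curve is removable exactly when it has zero length. The resolution I would invoke is the Coifman--McIntosh--Meyer theorem on the $L^2$-boundedness of the Cauchy integral on Lipschitz graphs; decomposing $K$ into pieces lying on such graphs, this boundedness produces a nontrivial positive measure $\mu$ on $K$ whose Cauchy transform $\mathcal{C}\mu$ is bounded. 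Since $\mathcal{C}\mu(\infty)=0$ while $(\mathcal{C}\mu)'(\infty)=-\mu(\mathbb{C})\neq 0$, this $\mathcal{C}\mu$ is a nonconstant element of $H^{\infty}(\Omega)$, so $E$ is not removable.

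The substantial direction is the converse: if $E$ is purely unrectifiable, meaning $\mathcal{H}^1(E \cap \Gamma)=0$ for every rectifiable $\Gamma$, then $\gamma(E)=0$. This is David's theorem, and the engine behind it is the curvature of measures. The crucial input is the Melnikov--Verdera symmetrization identity: for a positive measure $\mu$ of linear growth, the squared $L^2(\mu)$-norm of the (truncated) Cauchy transform coincides, modulo an error controlled by the growth constant, with a universal multiple of the Menger curvature energy
\[ c^2(\mu) = \iiint R(x,y,z)^{-2}\, d\mu(x)\, d\mu(y)\, d\mu(z), \]
where $R(x,y,z)$ is the radius of the circle through the three points. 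The algebraic heart is that, after averaging over permutations, the real part of the product of Cauchy kernels collapses into the nonnegative quantity $R^{-2}$, converting an oscillatory singular integral into geometry.

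Granting this, I would argue by contraposition. Assuming $\gamma(E)>0$, I would pass to the positive analytic capacity $\gamma_+$ --- using Tolsa's comparability $\gamma \approx \gamma_+$, or David's original construction --- to extract a nontrivial positive measure $\mu$ on $E$ with linear growth and $L^2(\mu)$-bounded Cauchy transform. The symmetrization identity then forces $c^2(\mu)<\infty$. Finally I would apply the David--L\'eger theorem, which asserts that a set of finite length carrying a measure of linear growth and finite curvature necessarily contains a rectifiable piece of positive length. This contradicts the pure unrectifiability of $E$ and completes the equivalence.

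The genuine obstacles live in this last paragraph. The David--L\'eger rectifiability theorem demands an intricate stopping-time and corona decomposition that builds a Lipschitz graph capturing a fixed proportion of $\mu$, controlling scale by scale the deviation of $\mu$ from a line by the local curvature; and the extraction of a good measure from the mere positivity of $\gamma$ rests on the $Tb$-type machinery underlying $\gamma \approx \gamma_+$. Both are far beyond a short argument and represent the true depth of Vitushkin's conjecture, so I would present the curvature-theoretic skeleton above and defer the technical heart to the works of David, L\'eger, Melnikov--Verdera and Tolsa.
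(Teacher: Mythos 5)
The paper does not actually prove this theorem: it is stated as a survey item, with the forward implication attributed to Denjoy's conjecture via Calder\'on's $L^2$-boundedness results and the reverse implication to David, and the author explicitly defers all proofs to the literature. Your outline is therefore not comparable to a proof in the paper, but it is a faithful roadmap of the standard argument that those references contain: Denjoy's conjecture through the boundedness of the Cauchy integral on Lipschitz graphs (Calder\'on suffices here, since a rectifiable set of positive length meets some Lipschitz graph of small constant in positive length, though Coifman--McIntosh--Meyer works too), and the hard direction through linear growth, the Melnikov--Verdera symmetrization identity turning the $L^2$ norm of the Cauchy transform into Menger curvature, and L\'eger's curvature theorem to produce a rectifiable piece of positive length, contradicting pure unrectifiability.

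Two caveats. First, what you call the ``reverse implication'' (positive intersection with some rectifiable curve implies non-removable) is in fact the contrapositive of the forward implication of the stated equivalence, i.e.\ Denjoy's conjecture, exactly as the paper says; the labeling should be fixed. Second, and more substantively, your write-up is a skeleton rather than a proof: the extraction of a measure with linear growth and $L^2$-bounded Cauchy transform from $\gamma(E)>0$ (David's $Tb$-type construction, or the later comparability $\gamma\approx\gamma_+$ of Tolsa) and L\'eger's rectifiability theorem are precisely where the entire difficulty of Vitushkin's conjecture lives, and you defer both. That is an honest and reasonable choice for a survey-level statement --- it mirrors what the paper itself does --- but it means no new mathematical content is being supplied beyond correctly naming the ingredients.
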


The forward implication was previously known as Denjoy's conjecture and follows from the results of Calder\'on \cite{CAL} on the $L^2$-boundedness of the Cauchy transform operator. The other implication was proved by David \cite{DAV} in 1998.

In 2000, Joyce and M\"{o}rters \cite{JOM} constructed a nonremovable compact set $E$ which intersects every rectifiable curve in a set of zero one-dimensional Hausdorff measure, therefore showing that Vitushkin's conjecture is false witout the assumption of finite length.

In general, for sets of infinite length, there is no nice characterization of removability such as Theorem \ref{VitConj}. However, in the remarkable paper \cite{TOL2}, Tolsa obtained a metric characterization of $H^{\infty}$-removability in terms of the notion of curvature of a measure. Before stating the result, we need some definitions.

\begin{definition}
A positive Borel measure $\mu$ on $\mathbb{C}$ has \textit{linear growth} if there exists some constant $C$ such that $\mu(\mathbb{D}(z,r)) \leq Cr$ for all $z \in \mathbb{C}$ and all $r >0$.
\end{definition}
\begin{definition}
For a positive Radon measure $\mu$ on $\mathbb{C}$, we define the \textit{curvature of} $\mu$ by
$$c^2(\mu):=\int \int \int \frac{1}{R(x,y,z)^2} d\mu(x) d\mu(y) d\mu(z),$$
where $R(x,y,z)$ is the radius of the circle passing through $x,y,z$.
\end{definition}
We can now state Tolsa's result.
\begin{theorem}[Tolsa \cite{TOL2}]
\label{ThmTolsa}
A compact set $E \subset \mathbb{C}$ is not $H^{\infty}$-removable if and only if it supports a nontrivial positive Radon measure with linear growth and finite curvature.
\end{theorem}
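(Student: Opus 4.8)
The plan is to reformulate removability in terms of \emph{analytic capacity} and its positive counterpart, and to reduce the theorem to Tolsa's comparability theorem together with Melnikov's curvature identity. For a compact set $E$ with $\Omega = \mathbb{C}_\infty \setminus E$, set
\[
\gamma(E) := \sup \{ |f'(\infty)| : f \in H^\infty(\Omega), \ \|f\|_\infty \le 1, \ f(\infty) = 0 \},
\]
and let $\gamma_+(E)$ denote the analogous supremum in which $f$ is restricted to be the Cauchy transform $\mathcal{C}\mu$ of a positive measure $\mu$ supported on $E$; since $(\mathcal{C}\mu)'(\infty) = -\mu(E)$, this is
\[
\gamma_+(E) := \sup \{ \mu(E) : \operatorname{supp}\mu \subset E, \ \mu \ge 0, \ \|\mathcal{C}\mu\|_\infty \le 1 \}.
\]
As in the proof of Theorem \ref{ThmPainleve}, one checks that $E$ is $H^\infty$-removable if and only if $\gamma(E) = 0$, and since every admissible $\mu$ yields an admissible $f = \mathcal{C}\mu$, we have the trivial inequality $\gamma_+(E) \le \gamma(E)$.

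First I would dispose of the implication that a measure as in the statement forces nonremovability, while clarifying why curvature is the right quantity. The link is \emph{Melnikov's identity}: for the truncated transforms $\mathcal{C}_\varepsilon\mu$ one has
\[
\int |\mathcal{C}_\varepsilon \mu|^2 \, d\mu = \frac{1}{6}\, c_\varepsilon^2(\mu) + O(\mu(\mathbb{C})),
\]
where $c_\varepsilon^2$ is the truncated curvature and the error is controlled by the linear growth constant. Given a nontrivial $\mu$ with linear growth and $c^2(\mu) < \infty$, a Chebyshev and stopping-time argument produces a subset $G$ with $\mu(G) > 0$ on which the local curvatures are uniformly bounded; by the curvature criterion for $L^2$-boundedness of the Cauchy transform (a consequence of Melnikov's identity and nonhomogeneous $T(1)$ theory), $\mathcal{C}$ is then bounded on $L^2(\mu|_G)$, and a duality argument yields $\gamma_+(E) \ge \gamma_+(G) > 0$, whence $\gamma(E) > 0$ and $E$ is not removable. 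Conversely, if $\gamma_+(E) > 0$ I would take a near-extremal $\mu$: the bound $\|\mathcal{C}\mu\|_\infty \le 1$ forces linear growth by a standard estimate on $\mathcal{C}(\mu|_{\mathbb{D}(z,r)})$, and running Melnikov's identity in the other direction then gives $c^2(\mu) \le C\,\mu(E) < \infty$. Thus the whole theorem is equivalent to the single implication $\gamma(E) > 0 \Rightarrow \gamma_+(E) > 0$.

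The hard part, and the entire content of Tolsa's theorem, is precisely this implication, which I would deduce from the quantitative \emph{comparability} $\gamma(E) \le C\,\gamma_+(E)$ for an absolute constant $C$. Starting from a near-extremal admissible $f$ with $|f'(\infty)|$ comparable to $\gamma(E)$, the goal is to manufacture a positive measure of comparable mass whose Cauchy transform is bounded. Tolsa's route is to use $f$, suitably regularized, as an accretive function $b$ and to invoke the nonhomogeneous $Tb$ theorem of Nazarov, Treil and Volberg, but applied not to the Cauchy kernel itself but to \emph{suppressed kernels}, that is $1/z$ truncated below a Lipschitz scale function, so that one controls the scales at which the candidate measure is allowed to concentrate. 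This is threaded through an induction on scales of corona type that redistributes mass while keeping the local curvatures, equivalently the local $L^2(\mu)$ norms of the Cauchy transform, under control, and it is here that curvature reappears as the quantity the $Tb$ machinery can actually estimate.

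The main obstacle is exactly this last step. Everything preceding it is classical Melnikov--Verdera curvature calculus together with soft capacity manipulations, but the comparability $\gamma \approx \gamma_+$ requires the full strength of nonhomogeneous Calder\'on--Zygmund theory: the construction of the accretive function, the suppressed-kernel $Tb$ theorem, and the delicate multiscale induction are what took the problem more than a century to settle, and none of them admits a genuine shortcut. For this reason, and in keeping with the expository aim of this section, I would state the comparability as a black box, present the short curvature arguments of the previous paragraph in full, and refer to \cite{TOL} for the proof of $\gamma \le C\gamma_+$.
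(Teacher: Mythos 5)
Your proposal is correct and takes essentially the same route as the paper, which states this theorem without proof and notes only that it "is a consequence of the comparability between $\gamma$ and another capacity $\gamma_+$" — exactly the black box you isolate. The surrounding reductions you supply (the equivalence of removability with $\gamma(E)=0$, the trivial bound $\gamma_+\le\gamma$, and the two applications of the Melnikov--Verdera identity showing that the theorem is equivalent to $\gamma(E)>0\Rightarrow\gamma_+(E)>0$) are accurate and consistent with the standard treatment in Tolsa's work.
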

A fundamental ingredient of the proof of Theorem \ref{ThmTolsa} is the notion of analytic capacity, which we now define.

\begin{definition}
The \textit{analytic capacity} of a compact set $E \subset \mathbb{C}$ is defined by
$$\gamma(E):=\sup\{|f'(\infty)| : f \in H^{\infty}(\Omega),|f| \leq 1\},$$
where $f'(\infty)=\lim_{z \to \infty} z(f(z)-f(\infty))$ and $\Omega=\mathbb{C}_\infty \setminus E$.
\end{definition}

Analytic capacity was first introduced by Ahlfors \cite{AHL} in 1947 for the study of Painlev\'e's problem, based on the observation that $E$ is $H^{\infty}$-removable if and only if $\gamma(E)=0$.

The characterization in Theorem \ref{ThmTolsa} is a consequence of the comparability between $\gamma$ and another capacity $\gamma_{+}$ which can be described in terms of measures with finite curvature and linear growth. Another important consequence of this comparability is the semiadditivity of analytic capacity, which was conjectured by Vitushkin \cite{VIT2} in 1967.

\begin{theorem}[Tolsa \cite{TOL2}]
\label{Semi}
We have
$$\gamma(E \cup F) \leq C(\gamma(E)+\gamma(F))$$
for all compact sets $E,F$, where $C$ is some universal constant.
\end{theorem}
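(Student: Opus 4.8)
The plan is to deduce semiadditivity of $\gamma$ from the corresponding property of the auxiliary capacity $\gamma_+$, defined by
$$\gamma_+(E) := \sup\{\mu(E) : \mu \geq 0, \ \operatorname{supp}\mu \subseteq E, \ \|\mathcal{C}\mu\|_{L^\infty(\mathbb{C})} \leq 1\}.$$
Since any admissible $\mu$ yields $f = \mathcal{C}\mu \in H^\infty(\Omega)$ with $|f| \leq 1$ and $|f'(\infty)| = \mu(\mathbb{C}) = \mu(E)$ (using $\mathcal{C}\mu(\infty)=0$ and $(\mathcal{C}\mu)'(\infty)=-\mu(\mathbb{C})$ from the discussion preceding Lemma \ref{CauchyDist}), one has $\gamma_+(E) \leq \gamma(E)$ immediately. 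Taking as granted the deep comparability $\gamma(E) \leq C_0\gamma_+(E)$ announced in the excerpt, the problem reduces to proving $\gamma_+(E \cup F) \leq C_1(\gamma_+(E) + \gamma_+(F))$, because chaining the three inequalities gives $\gamma(E\cup F) \leq C_0\gamma_+(E\cup F) \leq C_0 C_1(\gamma_+(E)+\gamma_+(F)) \leq C_0 C_1(\gamma(E)+\gamma(F))$.

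The reason to pass to $\gamma_+$ is that, unlike $\gamma$, it admits a characterization by a \emph{pointwise} and \emph{monotone} condition. Concretely, I would use that $\gamma_+(E)$ is comparable, up to universal constants, to a quantity of the form
$$\sup\{\mu(E) : \mu \geq 0, \ \operatorname{supp}\mu \subseteq E, \ U_\mu(x) \leq 1 \ \text{for all } x\},$$
where $U_\mu(x) := \theta_\mu(x) + c_\mu(x)$ combines the linear-growth density $\theta_\mu(x) = \sup_{r>0} \mu(\mathbb{D}(x,r))/r$ with the pointwise curvature $c_\mu(x)^2 = \int\int R(x,y,z)^{-2}\,d\mu(y)\,d\mu(z)$. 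The structural feature I want to exploit is that $U_\mu$ is monotone under restriction: if $\nu \leq \mu$ then $\theta_\nu \leq \theta_\mu$, and since the triple integrand defining the curvature is nonnegative, $c_\nu \leq c_\mu$ as well. Thus shrinking the measure can never violate the constraint $U_\mu \leq 1$.

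With this in hand the semiadditivity of $\gamma_+$ becomes almost formal. Let $\mu$ be a positive measure on $E \cup F$ with $U_\mu \leq 1$ and $\mu(E \cup F)$ within a fixed constant of $\gamma_+(E \cup F)$. Decompose $E\cup F$ into the disjoint pieces $E$ and $F' := (E\cup F)\setminus E \subseteq F$, and set $\mu_E = \mu|_E$, $\mu_F = \mu|_{F'}$. By monotonicity $U_{\mu_E} \leq U_\mu \leq 1$ and $U_{\mu_F} \leq 1$, so $\mu_E$ and $\mu_F$ are admissible for $E$ and $F$ respectively, giving $\mu_E(E) \leq C\gamma_+(E)$ and $\mu_F(F) \leq C\gamma_+(F)$. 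Adding,
$$\gamma_+(E\cup F) \leq C'\mu(E\cup F) = C'(\mu_E(E) + \mu_F(F)) \leq C'C(\gamma_+(E) + \gamma_+(F)),$$
which is the desired bound; the same argument produces countable semiadditivity with a single constant.

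The genuine difficulty is confined to the two comparability statements I invoked: the inequality $\gamma \leq C_0\gamma_+$ and the pointwise potential description of $\gamma_+$. Establishing these is where the analytic machinery lives — the identity relating the curvature $c^2(\mu)$ to the $L^2$ norm of the Cauchy transform, nonhomogeneous Calder\'on--Zygmund theory, and a $T(b)$-type construction of an accretive function adapted to a measure of controlled curvature. The hard part is therefore not the restriction combinatorics above but the verification that replacing the $L^\infty$ bound on $\mathcal{C}\mu$ by the pointwise bound $U_\mu \leq 1$ costs only a universal constant; once that is secured, passing from $\gamma$ to $\gamma_+$ converts the obstruction into a nonnegative, hence monotone, quantity, and semiadditivity follows by the one-line argument.
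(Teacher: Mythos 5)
The paper gives no proof of this theorem---it is quoted from Tolsa, with only the remark that semiadditivity is a consequence of the comparability between $\gamma$ and $\gamma_+$---and your proposal follows exactly that route, correctly isolating the deep inequality $\gamma \leq C_0\gamma_+$ and the pointwise-potential characterization of $\gamma_+$ as the black boxes where all the Calder\'on--Zygmund machinery lives. The one piece you do supply, the semiadditivity of $\gamma_+$ via monotonicity of $U_\mu=\theta_\mu+c_\mu$ under restriction of the measure to $E$ and to $(E\cup F)\setminus E$, is the standard argument and is correct.
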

Tolsa actually proved that analytic capacity is countably semiadditive.

We also remark that it is not known whether one can take $C=1$ in Theorem \ref{Semi}, i.e. whether analytic capacity is subadditive. See \cite{YOU} for more information on this problem.

Finally, we end this section by mentioning another important result of Tolsa regarding analytic capacity.

\begin{theorem}[Tolsa \cite{TOL3}]
\label{Bi}
Let $\phi:\mathbb{C} \to \mathbb{C}$ be a bilipschitz map. Then there exists a positive constant $C$ depending only on the bilipschitz constant of $\phi$ such that
$$C^{-1} \gamma(E) \leq \gamma(\phi(E)) \leq C \gamma(E)$$
for all compact sets $E \subset \mathbb{C}$.
In particular, the property of being $H^{\infty}$-removable is bilipschitz invariant.
\end{theorem}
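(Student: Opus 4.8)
The plan is to deduce the result from the deep characterization of analytic capacity already quoted above, reducing the bilipschitz invariance of $\gamma$ to a statement purely about curvature of measures. First I would observe that it suffices to prove a single inequality, say $\gamma(\phi(E))\le C\gamma(E)$: since $\phi^{-1}$ is bilipschitz with the same constant, applying this inequality to $\phi^{-1}$ and the compact set $\phi(E)$ yields $\gamma(E)\le C\gamma(\phi(E))$, which is the reverse bound. Next, rather than working with $\gamma$ directly, I would pass to the capacity $\gamma_{+}$ appearing in the discussion of Theorem \ref{ThmTolsa}. By Tolsa's comparability $\gamma\approx\gamma_{+}$, it is enough to prove $\gamma_{+}(\phi(E))\le C\gamma_{+}(E)$, and here the advantage is that $\gamma_{+}$ admits a description as a supremum of $\mu(E)$ over positive measures $\mu$ supported on $E$ with linear growth constant at most $1$ and curvature controlled by their mass, $c^2(\mu)\le\mu(E)$.

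With this in hand, I would fix a measure $\mu$ nearly attaining $\gamma_{+}(E)$ and consider its pushforward $\nu:=\phi_{*}\mu$, supported on $\phi(E)$, so that $\nu(\phi(E))=\mu(E)\approx\gamma_{+}(E)$. Two properties of $\nu$ must then be checked. That $\nu$ retains linear growth, with a constant depending only on the bilipschitz constant $L$ of $\phi$, is routine: a ball $\mathbb{D}(\phi(x),r)$ pulls back under $\phi$ to a set contained in $\mathbb{D}(x,Lr)$, so $\nu(\mathbb{D}(\phi(x),r))\le\mu(\mathbb{D}(x,Lr))\le Lr$, and after rescaling $\nu$ by a harmless constant we may assume its linear growth constant is again at most $1$.

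The genuine difficulty, and the heart of the matter, is to control the curvature of $\nu$ by that of $\mu$, i.e. to establish $c^2(\nu)\le C\,\bigl(c^2(\mu)+\mu(E)\bigr)$ with $C=C(L)$. One cannot do this termwise inside the triple integral defining $c^2$: the Menger curvature $1/R(x,y,z)=4\,\mathrm{Area}(x,y,z)/(|x-y||y-z||z-x|)$ is not comparable under $\phi$, since a bilipschitz map distorts areas and side lengths by unrelated factors and does not send circles to circles. The correct route is to recall that, for measures of linear growth, the total curvature $c^2(\mu)$ is comparable to the $L^2(\mu)$-energy of the (truncated) Cauchy transform via the Melnikov--Verdera identity; the curvature comparability then becomes the assertion that the operator with antisymmetric kernel $1/(\phi(x)-\phi(y))$ is bounded on $L^2(\mu)$ whenever the Cauchy transform is. This kernel has the right size, $|\phi(x)-\phi(y)|\approx|x-y|$, but because $\phi$ is merely Lipschitz it need not satisfy the smoothness estimates of a standard Calder\'on--Zygmund kernel, so classical $Tb$ or good-$\lambda$ machinery does not apply directly. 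I expect this to be the main obstacle, and it is exactly the deep content of Tolsa's paper \cite{TOL3}: the comparability of curvatures under bilipschitz maps is proved through a corona-type decomposition and the connection between curvature, the Jones $\beta$-numbers and uniform rectifiability, rather than by any pointwise or elementary estimate.

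Granting the curvature comparability, the argument closes quickly. Since $c^2(\nu)\le C(L)\,\nu(\phi(E))$ and $\nu$ has linear growth, multiplying $\nu$ by a suitable constant depending only on $L$ produces a measure admissible for the extremal problem defining $\gamma_{+}(\phi(E))$, whose mass remains comparable to $\mu(E)\approx\gamma_{+}(E)$. This gives $\gamma_{+}(\phi(E))\gtrsim\gamma_{+}(E)$, and combined with the reduction to a single inequality and the comparability $\gamma\approx\gamma_{+}$ it yields $C^{-1}\gamma(E)\le\gamma(\phi(E))\le C\gamma(E)$. The final assertion on $H^{\infty}$-removability is then immediate from Ahlfors' observation that $E$ is removable precisely when $\gamma(E)=0$.
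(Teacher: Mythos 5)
The paper itself gives no proof of this theorem; it is stated as a citation of Tolsa's paper \cite{TOL3}, so there is no internal argument to compare yours against. Your outline does faithfully reproduce the architecture of Tolsa's actual proof: the reduction to a single inequality via $\phi^{-1}$, the passage from $\gamma$ to $\gamma_{+}$ using the comparability theorem, the pushforward of a near-extremal measure, and the (correct and genuinely routine) verification that linear growth is preserved with constant depending only on $L$. You also correctly identify why a termwise estimate on the Menger curvature kernel fails and why the kernel $1/(\phi(x)-\phi(y))$, though of the right size, lacks the regularity needed for standard Calder\'on--Zygmund or $Tb$ arguments.

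The gap is that the step you ``grant'' --- the curvature comparability $c^2(\phi_{*}\mu)\le C(L)\bigl(c^2(\mu)+\|\mu\|\bigr)$ for measures of linear growth --- is not a lemma one can bracket off; it \emph{is} the theorem. Everything else in your write-up is a few lines of soft reductions that were already essentially known before \cite{TOL3}, whereas this single estimate occupies the bulk of Tolsa's hundred-page argument (the corona-type decomposition, the comparison of $c^2(\mu)$ restricted to the trees of the decomposition with the measure of the roots, and the invariance of that decomposition under bilipschitz maps). So while nothing you wrote is wrong, and your diagnosis of where the difficulty sits is accurate, the proposal is a reduction of the theorem to its own main lemma rather than a proof. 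If the intent was to give a self-contained argument, this step must be supplied; if the intent was to explain how the result follows from the machinery of \cite{TOL2} and \cite{TOL3}, you should state explicitly that the curvature comparability is being quoted as an external black box, on the same footing as the comparability $\gamma\approx\gamma_{+}$.
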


\section{$A$-removable sets}
\label{sec2}
This section is dedicated to the study of $A$-removable compact sets. Recall that a compact set $E$ is $A$-removable if every continuous function on the sphere that is holomorphic outside $E$ is constant. It seems that $A$-removable compact sets were first studied by Besicovitch \cite{BES}, who proved that compact sets of $\sigma$-finite length are $A$-removable. The interest in $A$-removability was reinvigorated by the work of Vitushkin \cite{VIT2} several years later, motivated by applications to the theory of uniform rational approximation of holomorphic functions.

\subsection{Main properties}

It follows from the inclusion $A(\Omega) \subset H^{\infty}(\Omega)$ that a compact set $E$ is $A$-removable whenever it is $H^{\infty}$-removable. However, the converse is easily seen to be false. For instance, any segment (or more generally, any analytic arc) is $A$-removable by Morera's theorem but is never $H^{\infty}$-removable. In particular, $A$-removable sets need not be totally disconnected.

On the other hand, we will see in this subsection that $A$-removable sets and $H^{\infty}$-removable sets share many interesting properties. A first example is the fact that removability for the class $A$ is also a local property.

\begin{proposition}
The following are equivalent :

\begin{enumerate}[\rm(i)]
\item For any open set $U$ with $E \subset U$, every continuous function on $U$ which is holomorphic on $U \setminus E$ is actually analytic on the whole open set $U$;
\item $E$ is $A$-removable.
\end{enumerate}
\end{proposition}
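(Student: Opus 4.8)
The plan is to mirror the proof of Proposition \ref{propextend}, replacing the boundedness argument there by a continuity argument, so that the residual summand lands in the class $A$ rather than merely in $H^\infty$.

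The implication $(i) \Rightarrow (ii)$ is immediate. Given $f \in A(\Omega)$, its restriction to $\mathbb{C}$ is continuous on $\mathbb{C}$ and holomorphic on $\mathbb{C} \setminus E$, so applying $(i)$ with the open set $U = \mathbb{C}$ (which contains $E$) shows that $f$ is entire. Being continuous on the compact sphere $\mathbb{C}_\infty$, it is bounded, hence constant by Liouville's theorem. Thus every element of $A(\Omega)$ is constant, which is precisely $A$-removability.

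For $(ii) \Rightarrow (i)$, I would fix an open set $U \supset E$ and a function $f$ continuous on $U$ and holomorphic on $U \setminus E$, and fix a point $z \in U \setminus E$. Exactly as in Proposition \ref{propextend}, choose cycles $\Gamma_1, \Gamma_2$ in $U \setminus (E \cup \{z\})$ with the appropriate winding numbers and split
$$f(z) = \frac{1}{2\pi i}\int_{\Gamma_1}\frac{f(\zeta)}{\zeta - z}\,d\zeta - \frac{1}{2\pi i}\int_{\Gamma_2}\frac{f(\zeta)}{\zeta - z}\,d\zeta =: f_1(z) + f_2(z),$$
where $f_1$ is holomorphic on $U$, $f_2$ is holomorphic on $\mathbb{C} \setminus E$ with $f_2(\infty) = 0$, and $f = f_1 + f_2$ on $U \setminus E$. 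The key step, and the place where the class $A$ genuinely enters, is to verify that $f_2 \in A(\Omega)$. Since $f_1$ is holomorphic, hence continuous, on $U$, and $f$ is continuous on $U$ by hypothesis, the identity $f_2 = f - f_1$ exhibits a continuous extension of $f_2$ from $U \setminus E$ across $E$. As $f_2$ is already continuous on $\mathbb{C} \setminus E$ and vanishes at $\infty$, it defines a continuous function on $\mathbb{C}_\infty$ that is holomorphic on $\Omega$, i.e.\ $f_2 \in A(\Omega)$. By $A$-removability of $E$, the function $f_2$ is constant, and $f_2(\infty) = 0$ forces $f_2 \equiv 0$. Hence $f = f_1$ on $U \setminus E$; since an $A$-removable set has empty interior (otherwise a nonconstant smooth bump supported in its interior would be a nonconstant element of $A(\Omega)$), the set $U \setminus E$ is dense in $U$, so the two continuous functions $f$ and $f_1$ agree on all of $U$. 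Therefore $f = f_1$ is analytic on the whole of $U$.

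The main obstacle is precisely the continuity upgrade in the middle step. In the $H^\infty$ setting one needed only that $f_2$ be bounded near $E$, which followed from boundedness of $f$ and $f_1$; here one must instead establish \emph{continuity} of $f_2$ up to and across $E$ so as to place it in $A(\Omega)$, and this is exactly what the continuity hypothesis on $f$, together with the analyticity of $f_1$ throughout $U$, delivers.
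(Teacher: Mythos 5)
Your proof is correct and follows exactly the route the paper intends: the paper's own proof consists of the single line ``the proof is exactly the same as in Proposition \ref{propextend},'' and your argument is precisely that adaptation, with the two points the paper leaves implicit (gluing $f-f_1$ on $U$ with $f_2$ on $\mathbb{C}_\infty\setminus E$ to place $f_2$ in $A(\Omega)$, and using the empty interior of $E$ to upgrade $f=f_1$ from $U\setminus E$ to all of $U$) correctly supplied. No gaps.
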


\begin{proof}
The proof is exactly the same as in Proposition \ref{propextend}.
\end{proof}
We also have the following analogue of Proposition \ref{union}.
\begin{proposition}
\label{unionA}
If $E$ and $F$ are $A$-removable compact sets, then $E \cup F$ is also $A$-removable.
\end{proposition}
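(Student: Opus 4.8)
The plan is to mirror the strategy used for Proposition \ref{union}, replacing the class $H^{\infty}$ by $A$ throughout and upgrading the relevant lemmas accordingly. Since the proof of Proposition \ref{union} factored through Lemma \ref{LemExtension}, the natural first step is to establish an analogue of that lemma for continuous functions: if $U \subset \mathbb{C}$ is open and $E$ is $A$-removable and compact, then every function that is continuous on $U$ and holomorphic on $U \setminus E$ is in fact holomorphic on all of $U$. Granting such a lemma, the proposition follows immediately by the same two-line argument: given $g$ continuous on $\mathbb{C}_\infty$ and holomorphic on $\mathbb{C} \setminus (E \cup F) = (\mathbb{C} \setminus F) \setminus E$, the extension lemma (applied with $U = \mathbb{C} \setminus F$) shows $g$ is holomorphic on $\mathbb{C} \setminus F$, and then $A$-removability of $F$ forces $g$ to be constant.

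The core of the work is therefore the continuous-extension lemma, and I would prove it by adapting the Vitushkin-localization argument of Lemma \ref{LemExtension}. The key observation is that Vitushkin's operator $V_{\phi_j}$ preserves continuity: if $f$ is continuous, then since $V_\phi f = \phi f + \frac{1}{\pi}\mathcal{C}(f\overline{\partial}\phi)$ and the Cauchy transform of a continuous compactly supported density is continuous, each $V_{\phi_j} f$ is continuous on $\mathbb{C}$. With the grid of squares $\{Q_j\}$ of side $l$ and a partition of unity $\{\phi_j\}$ subordinate to $\{2Q_j\}$ exactly as before, one writes $f = \sum_j V_{\phi_j} f$ and notes that each $V_{\phi_j} f$ is continuous on $\mathbb{C}$, holomorphic outside $2Q_j \cap E$ when $2Q_j \cap \partial U = \emptyset$, and vanishes at infinity. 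Because $2Q_j \cap E$ is $A$-removable (it is a compact subset of the $A$-removable set $E$, so removable by monotonicity), each such $V_{\phi_j} f$ is a continuous function on $\mathbb{C}_\infty$ holomorphic off a removable set, hence constant, hence identically zero since it vanishes at $\infty$. This leaves $f = \sum_{j:\, 2Q_j \cap \partial U \neq \emptyset} V_{\phi_j} f$, which is holomorphic on $U$ away from a $4l$-neighborhood of $\partial U$; letting $l \to 0$ gives holomorphy on all of $U$.

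The main obstacle is verifying that $V_\phi f$ is genuinely continuous (not merely bounded) when $f$ is continuous, since this is precisely the feature that distinguishes the $A$-case from the $H^{\infty}$-case and makes the argument go through. The delicate point is the continuity of $\mathcal{C}(f \overline{\partial}\phi)$: the density $f\,\overline{\partial}\phi$ is a continuous function with compact support, and the Cauchy transform of such a density is continuous on all of $\mathbb{C}$, including across $\operatorname{supp}(f\overline{\partial}\phi)$. I would record this as the one quantitative estimate needing care, referring to the standard mollification or direct modulus-of-continuity estimate for the Cauchy transform of an $L^\infty$ compactly supported kernel (as in \cite{TOL}); everything else is a verbatim transcription of the earlier proof with boundedness replaced by continuity. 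Since the author has already flagged in the remark following Proposition \ref{union} that the Vitushkin-operator proof was chosen precisely because it generalizes to the $A$-setting, I expect the intended proof to proceed exactly along these lines, likely by explicitly stating the continuous analogue of Lemma \ref{LemExtension} and then repeating the short deduction.
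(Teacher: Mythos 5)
Your proposal matches the paper's proof: the paper deduces Proposition~\ref{unionA} in exactly this way from Lemma~\ref{LemmeExtension} (the continuous analogue of Lemma~\ref{LemExtension}), whose proof is the same Vitushkin-localization argument with the single added observation that each $V_{\phi_j}f$ is continuous because the Cauchy transform of the compactly supported bounded density $f\,\overline{\partial}\phi_j$ depends continuously on $z$. The only cosmetic difference is that you require continuity of the density where the paper notes boundedness already suffices; otherwise the argument is the same.
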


The above is a direct consequence of the following analogue of Lemma \ref{LemExtension}.

\begin{lemma}
\label{LemmeExtension}
Let $U \subset \mathbb{C}$ be open and let $E \subset \mathbb{C}$ be compact and $A$-removable. Then every continuous function on $\mathbb{C}_\infty$ that is holomorphic on $U \setminus E$ is actually analytic the whole open set $U$.
\end{lemma}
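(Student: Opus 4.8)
The plan is to argue locally, in the same spirit as the proof of Lemma \ref{LemExtension}, but keeping track of \emph{continuity} rather than mere boundedness; it is in fact cleaner here to localize at a single point than to run the full grid decomposition. Since analyticity is a local property and $f$ is already holomorphic on $U \setminus E$, it suffices to show that $f$ is holomorphic in a neighborhood of each point $p \in U \cap E$. Fix such a $p$, choose $r>0$ with $\overline{\mathbb{D}}(p,2r) \subset U$, and pick $\phi \in C_c^{\infty}(\mathbb{C})$ with $\phi \equiv 1$ on $\mathbb{D}(p,r)$ and $\operatorname{supp}(\phi) \subset \mathbb{D}(p,2r) \subset U$. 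I would then study Vitushkin's localized function $V_\phi f$ and show that it vanishes identically.

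First I would record two structural facts about $V_\phi f$. On the one hand, by the computation in Lemma \ref{VitLocOp1} one has $\overline{\partial}(V_\phi f) = \phi\, \overline{\partial} f$. Since $f$ is holomorphic on $U \setminus E$, the distribution $\overline{\partial} f$ is supported in $(\mathbb{C} \setminus U) \cup E$, and as $\operatorname{supp}(\phi) \subset U$ this gives
$$\operatorname{supp}\big(\overline{\partial}(V_\phi f)\big) \subset \operatorname{supp}(\phi) \cap E.$$
Thus $V_\phi f$ is holomorphic off the compact set $K := \operatorname{supp}(\phi) \cap E$, which, being a compact subset of the $A$-removable set $E$, is itself $A$-removable by monotonicity. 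On the other hand, and this is the key new point compared with the $H^{\infty}$ setting, $V_\phi f = \phi f + \frac{1}{\pi}\mathcal{C}(f\,\overline{\partial}\phi)$ is continuous on all of $\mathbb{C}_\infty$: the term $\phi f$ is continuous and compactly supported, while $f\,\overline{\partial}\phi$ is a bounded, compactly supported function whose Cauchy transform is continuous (indeed H\"older continuous) and vanishes at infinity. Hence $V_\phi f$ extends continuously to $\mathbb{C}_\infty$ with $V_\phi f(\infty) = 0$.

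Combining these, $V_\phi f$ is a continuous function on $\mathbb{C}_\infty$ that is holomorphic on the complement of the $A$-removable set $K$; by $A$-removability it must be constant, and since it vanishes at infinity it is identically zero. Finally, on $\mathbb{D}(p,r)$ we have $\phi \equiv 1$, so $\overline{\partial}\phi = 0$ there and $\mathcal{C}(f\,\overline{\partial}\phi)$ is holomorphic on $\mathbb{D}(p,r)$; the identity $0 = V_\phi f = f + \frac{1}{\pi}\mathcal{C}(f\,\overline{\partial}\phi)$ on $\mathbb{D}(p,r)$ then exhibits $f = -\frac{1}{\pi}\mathcal{C}(f\,\overline{\partial}\phi)$ as holomorphic near $p$. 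As $p \in U \cap E$ was arbitrary, $f$ is holomorphic on all of $U$. Proposition \ref{unionA} would then follow from this lemma in exactly the same way that Proposition \ref{union} followed from Lemma \ref{LemExtension}.

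The step I expect to require the most care is the continuity claim for $V_\phi f$ up to and including the point at infinity: everything hinges on the fact that the Cauchy transform of the bounded compactly supported function $f\,\overline{\partial}\phi$ is a genuinely continuous (not merely bounded) function on the sphere. This is precisely the upgrade that allows $A$-removability, rather than $H^{\infty}$-removability, to be invoked, and it is what makes the localization operator the right tool for transplanting the argument of Lemma \ref{LemExtension} to the class $A$.
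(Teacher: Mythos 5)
Your argument is correct, and it rests on the same two pillars as the paper's proof --- the identity $\overline{\partial}(V_\phi f)=\phi\,\overline{\partial}f$ from Lemma \ref{VitLocOp1}, and the continuity on $\mathbb{C}_\infty$ of the Cauchy transform of the bounded, compactly supported density $f\,\overline{\partial}\phi$ --- but you deploy them differently. The paper transplants the grid argument of Lemma \ref{LemExtension} verbatim: a partition of unity $\{\phi_j\}$ subordinated to a grid of squares of side $l$, the decomposition $f=\sum_j V_{\phi_j}f$, the vanishing of every term with $2Q_j\cap\partial U=\emptyset$ by $A$-removability of $2Q_j\cap E$, and finally $l\to 0$ to absorb the squares meeting $\partial U$. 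You instead localize at a single point $p\in U\cap E$ with one bump function supported in $U$, show $V_\phi f\equiv 0$ outright, and recover $f=-\frac{1}{\pi}\mathcal{C}(f\,\overline{\partial}\phi)$ near $p$. This buys a shorter proof: no partition of unity, no sum over squares, no limiting argument in $l$, and no boundary squares to handle. It also quietly sidesteps a wrinkle in the ``same argument as before'' route: in Lemma \ref{LemExtension} the function is extended by zero across $\mathbb{C}\setminus(U\setminus E)$, which would destroy exactly the continuity one needs here, whereas your $f$ is globally continuous from the outset and is never modified. The step you flagged as delicate --- continuity of $\mathcal{C}(f\,\overline{\partial}\phi)$ up to and including $\infty$ --- is precisely the one new ingredient the paper isolates, and it holds for the reason you give. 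The grid formulation remains the natural one when running the full Vitushkin scheme, but for this particular lemma your pointwise localization is the cleaner argument.
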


\begin{proof}
The argument is the same as in Lemma \ref{LemExtension}, except we need the fact that the functions $V_{\phi_j}f$ are continuous on some open set containing $U \cup E$. But this follows directly from the definition of Vitushkin's localization operator
$$V_{\phi_j} f= \phi_j f + \frac{1}{\pi}\mathcal{C}(f \overline{\partial}\phi_j)$$
and from the fact that the integral
$$\int \frac{f(\zeta)\overline{\partial}\phi_j(\zeta)}{\zeta-z} dm(\zeta)$$
depends continuously on $z$, since $f\overline{\partial}\phi_j$ is bounded. Here $m$ is the two-dimensional Lebesgue measure.

\end{proof}

\begin{remark}
A simple argument using Lemma \ref{LemmeExtension} and the Baire category theorem shows that any compact countable union of $A$-removable compact sets is $A$-removable.
\end{remark}

\subsection{Relationship with Hausdorff measure}

As for $H^{\infty}$-removability, there is a close relationship between Hausdorff measure and dimension and $A$-removability. Indeed, first note that Theorem \ref{ThmPainleve} implies that compact sets of zero one-dimensional Hausdorff measure are $A$-removable. In particular, any compact set of Hausdorff dimension strictly less than one is removable. On the other hand, the proof of Theorem \ref{ThmFrostman} shows that compact sets of Hausdorff dimension strictly bigger than one are never $A$-removable. As for sets of dimension exactly equal to one, we have the following sufficient condition.

\begin{theorem}[Besicovitch \cite{BES}]
\label{RemA}
If $\mathcal{H}^1(E)<\infty$, then $E$ is $A$-removable.
\end{theorem}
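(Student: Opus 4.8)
The plan is to follow exactly the template of the proof of Painlev\'e's theorem (Theorem \ref{ThmPainleve}): first reduce $A$-removability to the vanishing of the ``derivative at infinity'' for all unit-bounded functions in the class, and then read off that vanishing from a contour integral taken along a thin tube surrounding $E$, using the continuity of $f$ in an essential way. Concretely, I would first argue that it suffices to prove $f'(\infty)=0$ for every $f\in A(\Omega)$. Indeed, if some $f\in A(\Omega)$ were nonconstant, there would be $z_0\in\Omega$ with $f(z_0)\neq f(\infty)$, and then
$$g(z):=\frac{f(z)-f(z_0)}{z-z_0}$$
is again an element of $A(\Omega)$ (the singularity at $z_0$ is removable, and $g$ is continuous on $\mathbb{C}_\infty$ with $g(\infty)=0$) satisfying $g'(\infty)=f(\infty)-f(z_0)\neq 0$. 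So the whole theorem collapses to showing $f'(\infty)=0$, and by rescaling we may assume $|f|\le 1$ and $f(\infty)=0$.

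Next I would set up the contour. Since $\mathcal{H}^1(E)<\infty$, for each $\epsilon>0$ I can cover $E$ by finitely many open disks $D_j=\mathbb{D}(z_j,r_j)$ of radius less than $\epsilon$ with $\sum_j r_j\le \mathcal{H}^1(E)+\epsilon$. Writing $W_\epsilon:=\bigcup_j\overline{D_j}$, the boundary $\partial W_\epsilon$ is a finite union of circular arcs of total length at most $2\pi\sum_j r_j$. As $f$ is holomorphic on $\mathbb{C}_\infty\setminus W_\epsilon$, I may deform the large circle in the definition of $f'(\infty)$ onto $\partial W_\epsilon$ (oriented so as to wind once around $E$) to obtain $f'(\infty)=\tfrac{1}{2\pi i}\int_{\partial W_\epsilon}f(z)\,dz$. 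The crude estimate then gives $|f'(\infty)|\le \sum_j r_j\le \mathcal{H}^1(E)+\epsilon$, which is bounded but \emph{not} small. This is exactly as far as the mere boundedness of $f$ can take us, and it is precisely the reason finite length does not suffice for $H^\infty$-removability; to reach the value $0$ one must genuinely exploit that $f$ is continuous \emph{across} $E$.

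The heart of the argument is therefore to upgrade the crude length bound into honest cancellation. The arcs making up $\partial W_\epsilon$ occur in nearby, oppositely oriented pairs forming the two ``sides'' of the thin tube around $E$, any two paired points lying within distance $O(\epsilon)$; across such a pair the values of $f$ differ by at most its modulus of continuity $\omega(\epsilon)$, since $f$ is uniformly continuous on the sphere. Subtracting the common value on each pair and summing, the contour integral should be controlled by a quantity of size $\lesssim \omega(\epsilon)\,\mathcal{H}^1(E)$, which tends to $0$ as $\epsilon\to 0$, yielding $f'(\infty)=0$ and hence $A$-removability. The main obstacle is making this pairing and cancellation rigorous for an \emph{arbitrary} compact set of finite length, which may be purely unrectifiable and whose associated tube boundary admits no simple description. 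For the model situation in which $E$ lies on a line or on a rectifiable curve the two sides of the tube are manifest and the estimate is transparent (this is essentially the classical Morera argument for a segment); the general case is where the geometric structure theory of sets of finite length must be invoked, and I expect organizing the covering so that the oscillation of $f$ over each component of the tube is controlled on average by the length that component carries to be the decisive technical step.
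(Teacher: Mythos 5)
Your reduction to showing $f'(\infty)=0$ for every $f\in A(\Omega)$ is fine, and you have correctly located where continuity must enter: the crude length bound only gives $|f'(\infty)|\lesssim\mathcal{H}^1(E)$. But the cancellation mechanism you propose --- pairing nearby, oppositely oriented arcs on the two ``sides'' of a tube around $E$ --- is the wrong one, and the obstacle you flag at the end is not a technicality you could push through: for a purely unrectifiable set of finite length the boundary of a union of small disks has no two ``sides,'' and no pairing of the required kind exists. If this route worked, it would in effect be exploiting rectifiable structure that a general set of finite length does not possess.

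The cancellation that actually works (and is what Besicovitch's argument, as sketched in the paper, uses) is local subtraction of constants rather than pairing: cover $E\cap S$, for a test square $S$, by small squares $S_j$ of side $\delta_j$ with disjoint interiors taken from a grid, and use that $\int_{\partial S_j}c\,d\zeta=0$ for any constant $c$, so
$$\int_{\partial S_j}f(\zeta)\,d\zeta=\int_{\partial S_j}\bigl(f(\zeta)-f(a_j)\bigr)\,d\zeta,$$
where $a_j$ is the center of $S_j$. Each term is then bounded by $4\delta_j\,\omega_f(\delta_j)=4h(\delta_j)$, and the total is at most $\omega_f(\delta)\sum_j\delta_j\lesssim\omega_f(\delta)\,\mathcal{H}^1(E)\to0$ as the mesh $\delta\to0$: finite length keeps $\sum_j\delta_j$ bounded while continuity kills $\omega_f(\delta)$. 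No pairing, no rectifiability, and no description of the tube boundary is needed; each small closed contour produces its own cancellation. The paper runs this through Morera's theorem on an arbitrary square $S$ (writing $\int_{\partial S}f=\sum_j\int_{\partial S_j}f$ by Cauchy's theorem on the complementary grid squares, where $f$ is holomorphic) and concludes that $f$ is entire, hence constant by Liouville; your $f'(\infty)$ formulation would work equally well once you replace the tube by a grid decomposition and subtract a different constant on each piece.
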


\begin{proof}
We only give a sketch of the proof. The interested reader may consult \cite[Chapter III, Section II]{GAR} for all the details.

Let $f:\mathbb{C}_\infty \to \mathbb{C}$ be continuous and holomorphic outside $E$, let $\omega_f:(0,\infty) \to [0,\infty)$ denote the modulus of continuity of $f$
$$\omega_f(\delta):=\sup \{|f(z)-f(w)|: |z-w|<\delta\}$$
and let $h(\delta):=\delta\omega_f(\delta)$. Fix $\epsilon>0$ and let $S$ be any square in $\mathbb{C}$. Since $h(\delta)/\delta \to 0$ as $\delta \to 0$ and $\mathcal{H}^1(E)<\infty$, we can find a cover $(S_j)_{j=1}^{\infty}$ of $S \cap E$ by squares of side length $\delta_j$ centered at $a_j$ with disjoint interiors and contained in $S$ such that
$$\sum_j h(\delta_j) < \epsilon.$$
Then by Cauchy's theorem,
$$\left|\int_{\partial S} f(\zeta)d\zeta\right|=\left|\sum_{j} \int_{\partial S_j} f(\zeta)d\zeta\right|=\left|\sum_{j} \int_{\partial S_j}(f(\zeta)-f(a_j))d\zeta\right| \leq \sum_{j} 4h(\delta_j) < 4\epsilon.$$
Since $\epsilon>0$ and $S$ are arbitrary, it follows from Morera's theorem that $f$ is holomorphic everywhere and hence must be constant by Liouville's theorem.

This shows that $E$ is $A$-removable.

\end{proof}
By the remark following the proof of Lemma \ref{LemmeExtension}, we get
\begin{corollary}
\label{sigmaf}
Suppose that $E$ is a countable union of compact sets of finite one-dimensional Hausdorff measure. Then $E$ is $A$-removable.
\end{corollary}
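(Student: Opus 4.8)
The plan is to deduce Corollary \ref{sigmaf} from Theorem \ref{RemA} together with the remark that follows Lemma \ref{LemmeExtension}, treating it as a countable-union statement. Suppose $E = \bigcup_{n=1}^{\infty} E_n$, where each $E_n$ is compact with $\mathcal{H}^1(E_n) < \infty$. By Theorem \ref{RemA}, each $E_n$ is $A$-removable. The goal is then exactly to show that a compact countable union of $A$-removable compact sets is $A$-removable, which is precisely the content of the remark following Lemma \ref{LemmeExtension}. So the substance of the argument lies in justifying that remark via a Baire category argument, mirroring the corresponding statement for $H^{\infty}$.

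First I would set up the Baire argument. Let $f : \mathbb{C}_\infty \to \mathbb{C}$ be continuous and holomorphic on $\Omega = \mathbb{C}_\infty \setminus E$; the aim is to show $f$ is entire (hence constant by Liouville). Since removability is local, it suffices to prove that $f$ is holomorphic in a neighborhood of each point of $E$. The key step is to identify, for each $n$, the open set $U_n := \mathbb{C}_\infty \setminus \bigcup_{m \neq n} \overline{E_m}$ on which $f$ is holomorphic off $E_n$; more carefully, one works with the relatively open pieces of $E$ and applies Baire's theorem to the complete metric space $E$ to locate, inside any relatively open subset of $E$, a point possessing a neighborhood meeting only one of the sets $E_n$ up to the removable remainder. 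The precise mechanism is: let $G$ be the (open) set of points near which $f$ is already known to be holomorphic, and let $K := E \setminus G$, a compact set; if $K \neq \emptyset$, apply Baire category to $K = \bigcup_n (E_n \cap K)$ to find an $n$ and a relatively open portion of $K$ contained in $E_n$.

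Next I would invoke Lemma \ref{LemmeExtension} as the local extension engine. Once a relatively open portion $V \cap K \subset E_n$ is isolated (with $V$ open in $\mathbb{C}$ and $V \cap E_m \subset G$ for $m \neq n$ after shrinking), the restriction of $f$ to $V$ is continuous on $\mathbb{C}_\infty$ and holomorphic on $V \setminus E_n$, since the only non-removability inside $V$ comes from $E_n$. Lemma \ref{LemmeExtension}, applied with the open set $V$ and the $A$-removable compact set $E_n$, then yields that $f$ is holomorphic on all of $V$. This contradicts the assumption that $V \cap K$ meets $K = E \setminus G$, forcing $K = \emptyset$ and hence $f$ holomorphic across all of $E$.

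The main obstacle I anticipate is the bookkeeping in the Baire step: one must arrange that the portion of $E$ isolated by Baire category genuinely reduces the problem to a \emph{single} $A$-removable set $E_n$, with the contributions of the other $E_m$ either absorbed into the already-removable region $G$ or excluded by shrinking the neighborhood $V$. The subtlety is that the $E_m$ may be densely intertwined, so the argument must proceed by transfinite/inductive exhaustion of $K$ rather than a naive finite decomposition; the completeness of $K$ and the fact that each $E_n \cap K$ is closed in $K$ are what make Baire's theorem applicable, guaranteeing that some $E_n \cap K$ has nonempty relative interior in $K$. Everything else is a routine combination of locality, Lemma \ref{LemmeExtension}, and Liouville's theorem.
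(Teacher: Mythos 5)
Your proposal follows the paper's own route exactly: Theorem \ref{RemA} makes each piece $A$-removable, and the corollary is then the remark following Lemma \ref{LemmeExtension} (a compact countable union of $A$-removable compacta is $A$-removable), whose Baire-category proof you correctly fill in by applying Baire to the compact set $K$ of non-holomorphy points and using Lemma \ref{LemmeExtension} locally. The only superfluous element is your worry about a ``transfinite/inductive exhaustion'': a single application of Baire's theorem to $K$ already yields the contradiction, as your own second and third paragraphs in fact show.
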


We end this subsection by mentioning a characterization of removability for product sets. If $E_1$ and $E_2$ are two compact subsets of $\mathbb{R}$ and $E_1$ is countable, then $E:=E_1 \times E_2$ is $A$-removable by Corollary \ref{sigmaf}. The converse is true provided that $m(E_2)>0$.

\begin{theorem}[Carleson \cite{CAR}]
\label{thmCarleson}
Let $E_1,E_2 \subset \mathbb{R}$ be compact and suppose that $m(E_2)>0$. Then $E=E_1 \times E_2$ is $A$-removable if and only if $E_1$ is countable.
\end{theorem}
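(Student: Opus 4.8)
The plan is to prove the nontrivial implication, since the easy direction is already at hand: if $E_1$ is countable, then $E=E_1\times E_2$ is a countable union of vertical slices $\{x\}\times E_2$, each of finite length $m(E_2)$, so $E$ is $A$-removable by Corollary \ref{sigmaf}. It remains to show that if $E_1$ is \emph{uncountable} and $m(E_2)>0$, then $E$ is \emph{not} $A$-removable. Since a compact subset of an $A$-removable set is $A$-removable, by monotonicity it suffices to exhibit a non-$A$-removable \emph{subset} of $E$; as $E_1$ is uncountable and compact, the Cantor--Bendixson theorem provides a nonempty perfect subset, so I may and do assume from now on that $E_1$ is perfect, and in particular carries a nonatomic probability measure.

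The construction I would use produces a nonconstant $f\in A(\Omega)$ as the Cauchy transform
\[
f=\mathcal{C}\nu,\qquad \nu=\mu_1\otimes(\mathbf{1}_{E_2}\,dy),
\]
of a product measure, where $\mu_1$ is a nonatomic probability measure on $E_1$ (to be chosen), $y$ is the vertical coordinate and $dy$ is linear Lebesgue measure. By the elementary properties of the Cauchy transform, $f$ is holomorphic on $\Omega$, $f(\infty)=0$ and $f'(\infty)=-\nu(\mathbb{C})=-\mu_1(E_1)\,m(E_2)\neq 0$, so $f$ is automatically nonconstant; the whole point is to arrange that $f$ extends \emph{continuously} across $E$. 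Writing $z=s+it$ and integrating first in the vertical variable, each slice contributes a one--dimensional Cauchy transform of $\mathbf{1}_{E_2}$ evaluated at a point whose height above the real axis is $x-s$, whose imaginary (Poisson) part is uniformly bounded by $\pi$ and whose jump, as $\Re z$ crosses $x$, is governed by the Plemelj formula. Integrating against the \emph{nonatomic} $\mu_1$ annihilates these jumps, and a routine splitting of $E_1$ into the part within distance $\rho$ of $\Re z$ (whose contribution is bounded by $\pi\,\mu_1([\,\Re z-\rho,\Re z+\rho])$, hence small) and its complement (where the integrand is smooth) shows that $\Re f$ is continuous on all of $\mathbb{C}$ for \emph{any} nonatomic $\mu_1$.

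The remaining part, $\Im f$, is where the difficulty concentrates. After the same splitting it is controlled by the logarithmic-potential term $\int_{E_1}\log\frac{1}{|x-t|}\,d\mu_1(x)$, arising from the real part of the one--dimensional Cauchy transform; equivalently, continuity of $f$ follows once $\nu$ is given a Dini-type growth bound $\nu(\mathbb{D}(z,r))\le\varepsilon(r)\,r$ with $\int_0\varepsilon(r)\,r^{-1}\,dr<\infty$, which for the product measure amounts to choosing $\mu_1$ with modulus of continuity $\mu_1([a-r,a+r])\le\varepsilon(r)$. When $E_1$ has positive logarithmic capacity --- which holds whenever $\dim_{\mathcal{H}}(E_1)>0$, in which case moreover $\dim_{\mathcal{H}}(E)>1$ and nonremovability is already supplied by Theorem \ref{ThmFrostman} --- one may take $\mu_1$ to be an equilibrium (or Frostman) measure, its logarithmic potential is bounded, and the argument closes.

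The genuine obstacle, and the technical heart of the theorem, is the case $\dim_{\mathcal{H}}(E_1)=0$, where $E_1$ has zero logarithmic capacity: then \emph{no} probability measure on $E_1$ has bounded logarithmic potential, nor the required modulus of continuity, so the explicit single--Cauchy--transform construction above cannot succeed, and the positive length of $E_2$ must be exploited more cleverly. Here I would follow Carleson's original, more delicate scheme, building the nonconstant element of $A(\Omega)$ not as the Cauchy transform of one fixed measure but through a localized, iterative procedure adapted to a dyadic grid, using the positive vertical length on each scale to manufacture uniformly bounded, continuous building blocks whose sum still has nonzero derivative at infinity. Establishing the uniform continuity of the resulting series, in the face of the clustering of slices permitted by a zero--dimensional $E_1$, is the step I expect to be the main obstacle.
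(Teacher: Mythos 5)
The paper itself does not prove the hard direction of this theorem: it only records that the countable case follows from Corollary \ref{sigmaf} (exactly your first paragraph) and cites Carleson for the converse. Your reduction to a perfect $E_1$ via Cantor--Bendixson, your disposal of the case $\dim_{\mathcal{H}}(E_1)>0$ through Theorem \ref{ThmFrostman}, and your diagnosis of why the naive transform $\mathcal{C}(\mu_1\otimes\mathbf{1}_{E_2}\,dy)$ forces a Dini/logarithmic-potential condition on $\mu_1$ are all correct. But the case you isolate as ``the genuine obstacle'' --- $E_1$ uncountable of zero logarithmic capacity --- is precisely the content of Carleson's theorem; every other case already follows from results stated earlier in the paper. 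Deferring it to an unspecified ``localized, iterative procedure'' leaves a genuine gap: the one thing that needed proving is the one thing not proved.

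The missing idea is not an elaborate iteration but the same exponential device the paper uses to show that a linear set of positive measure is not $H^{\infty}$-removable. Set $h(w):=\int_{E_2}\frac{dy}{iy-w}$, the Cauchy transform of the rotated slice $iE_2$. Then $\Re h$ is the bounded Poisson-type part, $|\Re h|\le\pi$, while the troublesome logarithmic part sits entirely in $\Im h$. Hence for $c>0$ the functions $g_x(z):=e^{c\,h(z-x)}$ are holomorphic off $\{x\}\times E_2$, uniformly bounded by $e^{c\pi}$, and satisfy $g_x(\infty)=1$, $g_x'(\infty)=-c\,m(E_2)\neq0$. For \emph{any} nonatomic probability measure $\mu_1$ on your perfect subset of $E_1$, the average $F(z):=\int g_x(z)\,d\mu_1(x)$ has $F'(\infty)=-c\,m(E_2)\neq0$, and the very two-scale splitting you already carry out for $\Re f$ --- the slices with $|x-\Re z|<\rho$ contribute at most $2e^{c\pi}\sup_a\mu_1([a-\rho,a+\rho])$, small by nonatomicity, while for $|x-\Re z|\ge\rho$ one has $|h'|\le\pi/\rho$, so the $g_x$ are uniformly Lipschitz there --- shows $F$ extends continuously to all of $\mathbb{C}_\infty$. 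Thus $F$ is a nonconstant element of $A(\Omega)$ with no capacity or dimension hypothesis on $E_1$ whatsoever, and your zero-dimensional case closes along with the others. In short: exponentiate the slice transforms before averaging, rather than averaging the Cauchy kernels themselves.
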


\subsection{Continuous analytic capacity}
\label{subsecCA}

This subsection is a brief introduction to an extremal problem whose solution measures the size of a compact set from the point of view of $A$-removability. This extremal problem is usually referred to as \textit{continuous analytic capacity} and is the analogue of analytic capacity for the class $A$.

\begin{definition}
The \textit{continuous analytic capacity} of a compact set $E \subset \mathbb{C}$ is defined by
$$\alpha(E):=\sup \{|f'(\infty)| : f \in A(\Omega), |f| \leq 1\}.$$
\end{definition}
Continuous analytic capacity was first introduced by Erokhin and Vitushkin \cite{VIT2} in order to study problems of uniform rational approximation of holomorphic functions. See also \cite{ZAL},\cite{DAVIE} and \cite[Chapter VIII]{GAM} for the applications of continuous analytic capacity to this type of problem.

It follows easily from the definition that $\alpha(E) \leq \gamma(E)$ and that $E$ is $A$-removable if and only if $\alpha(E)=0$. Unfortunately, unlike analytic capacity, there is no known geometric characterization of compact sets $E$ such that $\alpha(E)=0$. The main recent advances are again due to Tolsa, who proved that continuous analytic capacity is (countably) semiadditive and that it is bilipschitz invariant in the sense of Theorem \ref{Bi}. See \cite{TOL4} and \cite{TOL3}.

\section{$S$-removable sets}
\label{sec3}
This section is dedicated to the study of $S$-removable compact sets. Recall that a compact set $E \subset \mathbb{C}$ is $S$-removable if every conformal map on $\Omega=\mathbb{C}_\infty \setminus E$ is a M\"{o}bius transformation.

The notion of $S$-removability was first considered by Sario \cite{SAR} for the problem of classifying Riemann surfaces. A couple of years later, in 1950, Alhfors and Beurling published their seminal paper \cite{AHB} containing among other things a characterization of $S$-removable compact sets, which we present below. First, we need some preliminaries on quasiconformal mappings.

\subsection{Preliminaries on quasiconformal mappings}

This subsection consists of a very brief introduction to quasiconformal mappings. For more information, the reader may consult \cite{AHL2}, \cite{AST} or \cite{LEH}.

There are several equivalent definitions of quasiconformal mappings. The following is the analytic one.
\begin{definition}
Let $K \geq 1$, let $U,V$ be domains in the Riemann sphere and let $f:U \to V$ be an orientation-preserving homeomorphism. We say that $f$ is $K$-\textit{quasiconformal} on $U$ if it belongs to the Sobolev space $W^{1,2}_{loc}(U)$ and satisfies the Beltrami equation
$$\overline{\partial}f=\mu \, \partial f$$
almost everywhere on $U$ for some measurable function $\mu$ with $\|\mu\|_{\infty} \leq \frac{K-1}{K+1}$. In this case, the function $\mu$ is called the \textit{Beltrami coefficient} of $f$ and is denoted by $\mu_f$.
\end{definition}
A mapping is conformal if and only if it is $1$-quasiconformal. This is usually referred to as Weyl's lemma. We will also need the fact that quasiconformal mappings preserve sets of area zero, in the sense that if $E \subset U$ is measurable, then $m(E)=0$ if and only if $m(f(E))=0$, where $m$ is the two-dimensional Lebesgue measure.

The following fundamental theorem was first proved by Morrey \cite{MOR} in 1938.

\begin{theorem}[Measurable Riemann mapping theorem]
\label{MRMT}
Let $U$ be a domain in the sphere and let $\mu : U \to \mathbb{C}$ be a measurable function with $\|\mu\|_{\infty}<1$. Then there exists a quasiconformal mapping $f$ on $U$ such that $\mu=\mu_f$, i.e.
$$\overline{\partial}f = \mu \, \partial f$$
almost everywhere on $U$. Moreover, a quasiconformal mapping $g$ on $U$ satisfies $\mu_g=\mu=\mu_f$ if and only if $f \circ g^{-1}: g(U) \to f(U)$ is conformal.
\end{theorem}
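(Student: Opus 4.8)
The plan is to establish existence through a singular-integral ansatz and to deduce uniqueness from the composition law for Beltrami coefficients. I would first reduce to the case $U = \mathbb{C}$: extending $\mu$ by zero outside $U$ and solving the global problem on the plane yields a quasiconformal homeomorphism whose Beltrami coefficient agrees with $\mu$ on $U$, so its restriction to $U$ is the desired $f$; the case where $\infty \in U$ follows by conjugating with a Möbius transformation. By a further normalization I may assume $\mu$ has compact support, handling the general bounded case by an exhaustion and limiting argument at the end.

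For existence on $\mathbb{C}$ I look for a solution of the normalized form $f(z) = z + P\omega(z)$, where $P := -\frac{1}{\pi}\mathcal{C}(\,\cdot\, dm)$ is the solid Cauchy transform, so that $\overline{\partial} f = \omega$ by Lemma \ref{CauchyDist}. Writing $S := \partial \circ P$ for the Beurling transform, which formally sends $\overline{\partial} g \mapsto \partial g$ and is given by the principal-value integral $Sf(z) = -\frac{1}{\pi}\,\mathrm{p.v.}\!\int \frac{f(\zeta)}{(\zeta - z)^2}\,dm(\zeta)$, one has $\partial f = 1 + S\omega$. The Beltrami equation $\overline{\partial} f = \mu\,\partial f$ then becomes the fixed-point equation $(I - \mu S)\omega = \mu$, which I would solve by the Neumann series $\omega = \sum_{n \geq 0}(\mu S)^n \mu$. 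Since the outermost operator in each term is multiplication by $\mu$, the function $\omega$ is supported in $\operatorname{supp}\mu$, which is why the reduction to compact support is useful.

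The convergence of this series is the crux and the main obstacle. The Beurling transform has Fourier multiplier of modulus one, so it is an isometry of $L^2(\mathbb{C})$; being a Calderón--Zygmund operator it is bounded on every $L^p$ for $1 < p < \infty$, and its operator norm $\|S\|_p$ depends continuously on $p$ with $\|S\|_2 = 1$. Since $\|\mu\|_\infty = k < 1$, for $p$ sufficiently close to $2$ we obtain $\|\mu S\|_{L^p \to L^p} \le k\,\|S\|_p < 1$, and the series converges in $L^p$ to some $\omega \in L^p$ with $p > 2$. Establishing this $L^p$-boundedness with norm tending to $1$ as $p \to 2$ requires the full Calderón--Zygmund machinery together with interpolation, and is what makes the theorem genuinely hard.

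Granting $\omega \in L^p$ for some $p > 2$, I would verify that $f = z + P\omega$ is a quasiconformal homeomorphism: Sobolev embedding gives $f \in W^{1,p}_{\mathrm{loc}} \subset C^{0,1-2/p}_{\mathrm{loc}}$, the construction gives the dilatation bound, and one checks that $f$ is a homeomorphism, for instance by approximating $\mu$ by smooth coefficients (for which the solution is a diffeomorphism by classical theory) and passing to the limit, or via a degree argument. Finally, for the uniqueness statement: if $h := f \circ g^{-1}$ is conformal then the chain rule for Beltrami coefficients immediately gives $\mu_g = \mu_f$. Conversely, the composition formula expresses $\mu_h \circ g$ as a ratio whose numerator is proportional to $\mu_f - \mu_g$; hence $\mu_f = \mu_g$ forces $\mu_h = 0$ almost everywhere, using that $g$ preserves sets of area zero, so $h$ is $1$-quasiconformal and therefore conformal by Weyl's lemma.
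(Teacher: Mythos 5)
The paper does not prove this theorem: it is stated as a classical result, attributed to Morrey \cite{MOR}, and used as a black box throughout (e.g.\ in Propositions \ref{RemArea}, \ref{propQC} and the quasiconformal invariance of $CH$-removability). Your sketch is therefore not competing with an argument in the text; it is an outline of the standard singular-integral proof (Bojarski, Ahlfors--Bers; see \cite{AHL2} or \cite{AST}), and as an outline it is correct. The reduction to $\mu$ supported on $\mathbb{C}$, the ansatz $f=z+P\omega$ with $\overline{\partial}f=\omega$ and $\partial f=1+S\omega$, the Neumann series for $(I-\mu S)\omega=\mu$, and the identification of the key analytic input --- that $\|S\|_{L^p\to L^p}\to 1$ as $p\to 2$, via Calder\'on--Zygmund theory and Riesz--Thorin interpolation, so that $\|\mu S\|_{L^p\to L^p}\le k\|S\|_p<1$ for some $p>2$ --- are all exactly right, and you correctly locate the difficulty there. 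Your uniqueness argument (composition formula for Beltrami coefficients, with numerator proportional to $\mu_f-\mu_g$, plus preservation of null sets and Weyl's lemma) is also the standard one and is complete modulo the chain rule for $W^{1,2}_{loc}$ compositions. Two points are compressed to the point of being gaps if this were meant as a full proof: (a) showing that $f=z+P\omega$ is actually a \emph{homeomorphism} is a substantial step (the usual route is to prove it first for smooth compactly supported $\mu$, where $f$ is a diffeomorphism onto $\mathbb{C}$, and then pass to the limit using uniform H\"older bounds and normal families, together with an argument that the limit is injective); and (b) the passage from compactly supported $\mu$ to general $\mu\in L^\infty(\mathbb{C})$ is normally done by a factorization $f=f_2\circ f_1$ splitting $\mu$ inside and outside a disk (or by the inversion trick $z\mapsto 1/\bar z$), rather than a bare exhaustion, since the Neumann series genuinely needs the support restriction. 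With those two steps filled in, your proposal is a faithful account of the standard proof.
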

We can now describe the various properties of $S$-removable sets.

\subsection{Main properties}

As in Proposition \ref{TotDisc}, any $S$-removable set must be totally disconnected. Indeed, if $F$ is a nontrivial connected component of an $S$-removable compact set $E$, then by the Riemann mapping theorem there is a conformal map $f$ on $\mathbb{C}_\infty \setminus F \supset \mathbb{C}_\infty \setminus E$ and the image of this map can always be chosen so that $f$ is not conformal everywhere.

Although $S$-removable sets are totally disconnected, it is interesting to mention the result of Thurston \cite{THU} saying that there exist many connected sets $E$ (even \textit{quasi-intervals}) that are not far from being removable in the sense that there is an $\epsilon>0$ such that every conformal map on $\Omega$ with Schwarzian derivative less than $\epsilon$ is a M\"{o}bius transformation. Such compact sets are said to have \textit{conformally rigid} complement and they are known to have zero area \cite{OVE}.

\begin{proposition}
\label{RemS}
If $E$ is $H^{\infty}$-removable, then $E$ is $S$-removable.
\end{proposition}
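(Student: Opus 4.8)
The plan is to show that, after a harmless Möbius normalization, the map $f$ extends to a holomorphic self-map of the sphere with a simple pole at infinity, which forces it to be affine. Note first that the naive approach of applying $H^{\infty}$-removability directly to the holomorphic function $f(z)-z$ fails, because this function need not be bounded near $E$; the whole point will be to produce boundedness before invoking removability.

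First I would post-compose $f$ with a Möbius transformation so that $f(\infty)=\infty$. Since $m\circ f$ is a Möbius transformation if and only if $f$ is, there is no loss of generality. Because $f$ is injective and $\infty$ is then the unique preimage of $\infty$, the restriction $f:\mathbb{C}\setminus E\to\mathbb{C}$ is finite-valued and holomorphic, and conformality of $f$ at infinity shows that $f$ has a simple pole there, i.e. $f(z)\sim az$ with $a\neq 0$ as $z\to\infty$.

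The key step, and the main obstacle, is to prove that $f$ is bounded on a neighborhood of $E$. Here I would exploit that $f$ is a homeomorphism of $\Omega$ onto the open set $f(\Omega)\ni\infty$, so that $f^{-1}$ is continuous at $\infty$ with $f^{-1}(\infty)=\infty$. If some sequence $z_n\to p\in E$ satisfied $f(z_n)\to\infty$ in $\mathbb{C}_\infty$, then applying $f^{-1}$ would give $z_n=f^{-1}(f(z_n))\to f^{-1}(\infty)=\infty$, contradicting $z_n\to p\neq\infty$. Hence $f$ stays bounded near each point of $E$, and by compactness of $E$ there is a bounded open neighborhood $V$ of $E$ on which $f$ is bounded, say $|f|\leq M$ on $V\setminus E$. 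This is exactly the place where injectivity is indispensable: it is the continuity of $f^{-1}$ that prevents $f$ from blowing up along $E$.

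Finally, since $E$ is $H^{\infty}$-removable, I would apply Lemma \ref{LemExtension} to the bounded holomorphic function $f$ on $V\setminus E$ to obtain an analytic extension of $f$ across $E$. Gluing this local extension with $f$ on $\Omega$ (they agree on the overlap $V\cap\Omega=V\setminus E$) yields a function $F:\mathbb{C}_\infty\to\mathbb{C}_\infty$ that is holomorphic everywhere and agrees with $f$ on $\Omega$. Since $F$ is finite-valued on $\mathbb{C}$ and has a simple pole at infinity, it is a polynomial of degree one, that is, an affine map. Undoing the initial normalization, $f$ is the restriction of a Möbius transformation, so $E$ is $S$-removable.
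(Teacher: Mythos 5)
Your proof is correct, but it takes a different route from the paper's. Both arguments begin by normalizing $f(\infty)=\infty$ and observing that $f$ is then bounded near $E$ (a point the paper asserts without justification and which you prove carefully via the continuity of $f^{-1}$ at $\infty$), but they diverge in how removability is invoked. The paper never leaves $\Omega$: it applies the definition of $H^{\infty}$-removability directly to the difference quotient $z \mapsto (f(z)-f(z_0))/(z-z_0)$, which is bounded and holomorphic on all of $\Omega$ because the division by $z-z_0$ simultaneously tames the simple pole at infinity and costs nothing near $E$; its constancy immediately forces $f$ to be linear. You instead extend $f$ across $E$ using Lemma \ref{LemExtension} (Proposition \ref{propextend} would already suffice, since your neighborhood $V$ contains $E$) and then classify the resulting entire function with a simple pole at infinity as affine. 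Your route leans on heavier machinery --- Lemma \ref{LemExtension} is proved via Vitushkin's localization operator, whereas Proposition \ref{propextend} needs only Cauchy's formula --- but it is perhaps more transparent about where the extension happens. One small correction to your opening remark: the obstruction to applying removability to $f(z)-z$ is not boundedness near $E$ (once $f$ is bounded near the compact set $E$, so is $f(z)-z$), but rather unboundedness near $\infty$, where $f(z)-z\sim (a-1)z$; the paper's difference quotient is designed precisely to repair this.
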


\begin{proof}
Let $f$ be any conformal map on $\Omega$. Composing $f$ with a M\"{o}bius transformation if necessary, we can assume that $f(\infty)=\infty$. Then $f$ is bounded near $E$. Fix some point $z_0 \in \Omega$ and consider the function
$$z \mapsto \frac{f(z)-f(z_0)}{z-z_0}.$$
Clearly, this defines a bounded holomorphic function on $\Omega$, which must be constant by $H^{\infty}$-removability of $E$. It follows that $f$ is linear. This shows that $E$ is $S$-removable.

\end{proof}

In particular, by Theorem \ref{ThmPainleve}, any compact set of zero one-dimensional Hausdorff measure is $S$-removable. On the other hand, the following proposition implies that $S$-removable sets must be rather small.

\begin{proposition}
\label{RemArea}
If $E$ is $S$-removable, then the area of $E$ is zero.
\end{proposition}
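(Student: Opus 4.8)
If $E$ is $S$-removable, then the area of $E$ is zero.

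We need to show that $S$-removability forces $m(E)=0$. The natural strategy is contrapositive: assuming $m(E)>0$, construct a conformal map on $\Omega=\mathbb{C}_\infty\setminus E$ that is not a Möbius transformation, thereby witnessing non-removability. The key tool is the Measurable Riemann Mapping Theorem (Theorem~\ref{MRMT}), which lets us manufacture quasiconformal maps with prescribed Beltrami coefficient, combined with the fact that conformal maps are exactly the $1$-quasiconformal ones (Weyl's lemma).

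Let me think about how to build the example. The plan is to let $\Omega$ carry only the data of being conformal, while placing a nontrivial Beltrami coefficient supported on $E$...

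Wait. The issue is that $\Omega$ is the complement of $E$, so conformal maps live on $\Omega$, away from $E$. I want a map that is conformal on $\Omega$ but nonlinear, and the obstruction to linearity must come from the behavior across $E$. So the right idea is to build a global quasiconformal map $F$ of the sphere whose Beltrami coefficient is supported on $E$ (hence $F$ is conformal on $\Omega$), but which fails to be Möbius precisely because $m(E)>0$ lets the dilatation on $E$ have genuine effect.

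Here is the plan in order. First, since $m(E)>0$, choose a measurable Beltrami coefficient $\mu$ supported on $E$ with $\|\mu\|_\infty<1$ and $\mu$ not almost-everywhere zero — for instance a nonzero constant times the characteristic function of $E$. Extend $\mu$ by zero off $E$ to get a global Beltrami coefficient on $\mathbb{C}_\infty$. Second, apply the Measurable Riemann Mapping Theorem to obtain a quasiconformal $F:\mathbb{C}_\infty\to\mathbb{C}_\infty$ with $\mu_F=\mu$; since $\mu\equiv 0$ on $\Omega$, the restriction $f:=F|_\Omega$ is $1$-quasiconformal, hence conformal on $\Omega$. Third, observe that $f$ cannot be a Möbius transformation: if it were, then $F$ would agree on $\Omega$ with a conformal map of the sphere, and the two quasiconformal maps $F$ and that Möbius map would have the same Beltrami coefficient ($0$) on $\Omega$; but $F$ has $\mu_F=\mu\not\equiv 0$ on $E$, whereas a Möbius transformation is conformal everywhere with Beltrami coefficient $0$. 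Since $m(E)>0$, the set where $\mu\neq 0$ has positive measure, so $F$ is genuinely non-conformal on $\mathbb{C}_\infty$ and thus differs from any Möbius map. Therefore $f$ is a nonlinear conformal map on $\Omega$, and $E$ is not $S$-removable.

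The step requiring the most care is the third one, where I must argue that $f$ really fails to be Möbius rather than merely that $F$ does. The point to nail down is that a Möbius transformation of the sphere is conformal everywhere, so if $f=F|_\Omega$ extended to a Möbius map $m$, then $F$ and $m$ would be two homeomorphisms of $\mathbb{C}_\infty$ agreeing on the dense open set $\Omega$ (here one uses that $E$ has empty interior, which follows since $m(E)<\infty$ does not directly give this — but in fact an $S$-removable set is totally disconnected, and more simply the argument only needs that $F$ and a putative Möbius extension agree on $\Omega$ and are both continuous, forcing $F=m$ on all of $\mathbb{C}_\infty$). This then contradicts $\mu_F\neq 0$ on a set of positive measure. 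The essential mechanism is that positive area is exactly what is needed for the Beltrami coefficient on $E$ to affect the map: quasiconformal maps preserve null sets, so a coefficient supported on a null set would produce no genuine distortion, which is precisely why the hypothesis $m(E)>0$ is indispensable.
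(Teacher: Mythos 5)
Your proof is correct and follows essentially the same route as the paper: assume $m(E)>0$, use the measurable Riemann mapping theorem to produce a global quasiconformal map with Beltrami coefficient a nonzero constant times $\chi_E$, and note that its restriction to $\Omega$ is conformal but cannot be M\"obius since $\overline{\partial}F\neq 0$ on a set of positive measure. Your extra care in ruling out that $F|_\Omega$ merely coincides with a M\"obius restriction is a reasonable addition, but the argument is the same as the paper's.
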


\begin{proof}
If $E$ has positive area, then by the measurable Riemann mapping theorem (Theorem \ref{MRMT}) there exists a quasiconformal mapping $f$ on $\mathbb{C}_\infty$ such that $\mu_f=\frac{1}{2} \chi_E$ almost everywhere. In particular, the map $f$ is conformal outside $E$ and is not a M\"{o}bius transformation, since $\overline{\partial} f \neq 0$ on a set of positive measure. Hence $E$ cannot be $S$-removable.
\end{proof}

\begin{remark}

The above proof actually shows that sets of positive area are never $CH$-removable.
\end{remark}

\begin{remark}
An alternative argument is the following. By a result of \cite{UY}, a compact set has zero area if and only if it is removable for Lipschitz functions on the sphere which are holomorphic outside the set. Therefore, if $E$ has positive area, then there exists such a function, say $f$, which is not holomorphic everywhere. It follows that if $\epsilon>0$ is small enough, then $z \mapsto z+\epsilon f(z)$ is a non-M\"{o}bius homeomorphism of the sphere conformal outside $E$.
\end{remark}

\begin{remark}
An interesting question is the following : if $E$ has positive area, does there necessarily exist a homeomorphism of the sphere onto itself which is conformal outside $E$ but is not quasiconformal everywhere? The answer is yes, by a result of Kaufman and Wu \cite{KAU} stating that there always exists a function $f \in CH(\Omega)$ which maps a subset $F$ of $E$ of positive area onto a set of zero area. This map $f$ cannot be quasiconformal everywhere since it doesn't preserve sets of measure zero. It is still open whether one can take $F=E$ in Kaufman and Wu's result, see \cite{BIS2}.
\end{remark}

The proof of Proposition \ref{RemArea} illustrates the usefulness of quasiconformal mappings in the study of $S$-removable sets. This motivates the following definition.

\begin{definition}
We say that a compact set $E \subset \mathbb{C}$ is $QC$-\textit{removable} if every quasiconformal mapping on $\mathbb{C}_\infty \setminus E$ has a quasiconformal extension to the whole Riemann sphere.
\end{definition}

The following result essentially says that the property of being $QC$-removable is local.

\begin{proposition}
\label{PropExtendQC}
The following are equivalent :
\begin{enumerate}[\rm(i)]
\item For any open set $U$ with $E \subset U$, every quasiconformal mapping on $U \setminus E$ extends quasiconformally to the whole open set $U$;
\item $E$ is $QC$-removable.
\end{enumerate}
\end{proposition}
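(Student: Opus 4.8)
The plan is to prove the two implications separately, with essentially all the substance lying in $(ii)\Rightarrow(i)$. The implication $(i)\Rightarrow(ii)$ is immediate: given a quasiconformal map $f$ on $\mathbb{C}_\infty\setminus E$, I would restrict it to $\mathbb{C}\setminus E$ and apply $(i)$ with $U=\mathbb{C}$ to obtain a quasiconformal extension $\tilde f$ to all of $\mathbb{C}$; since $f$ is already quasiconformal on a neighborhood of $\infty$ and agrees with $\tilde f$ on $\mathbb{C}\setminus E$, the two maps glue to a quasiconformal self-map of $\mathbb{C}_\infty$, which is the desired extension. (Equivalently, one may simply take $U=\mathbb{C}_\infty$ in $(i)$.)

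For $(ii)\Rightarrow(i)$, the idea is to mimic the localization argument of Proposition \ref{propextend}, replacing the \emph{linear} splitting coming from Cauchy's integral formula by the \emph{nonlinear} straightening provided by the measurable Riemann mapping theorem (Theorem \ref{MRMT}). So assume $E$ is $QC$-removable, let $U\supset E$ be open, and let $f$ be quasiconformal on $U\setminus E$. Fix a bounded open set $V$ with $E\subset V$ and $\overline V\subset U$. I would transplant the complex dilatation of $f$ near $E$ to the whole sphere by setting $\mu:=\mu_f$ on $V\setminus E$ and $\mu:=0$ elsewhere; then $\|\mu\|_\infty<1$, and by Theorem \ref{MRMT} there is a quasiconformal self-map $h$ of $\mathbb{C}_\infty$, normalized by $h(\infty)=\infty$, with $\mu_h=\mu$ almost everywhere. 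Since $f$ and $h$ have the same Beltrami coefficient on $V\setminus E$, the uniqueness part of Theorem \ref{MRMT} shows that $g:=f\circ h^{-1}$ is conformal on $h(V)\setminus h(E)$. Because $h$ fixes $\infty$, the set $K:=h(E)$ is a compact subset of $\mathbb{C}$, and extending $f$ quasiconformally across $E$ is equivalent to extending $g$ quasiconformally across $K$: indeed $f=g\circ h$ on $V\setminus E$, so any quasiconformal extension $\tilde g$ of $g$ over $K$ yields the quasiconformal extension $\tilde g\circ h$ of $f$ over $E$, which may then be glued to the original $f$ on $U\setminus E$.

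It remains to extend $g$ over $K$, and here I would invoke removability. First, $QC$-removability is invariant under global quasiconformal maps: if $\psi$ is quasiconformal on $\mathbb{C}_\infty\setminus K$, then $\psi\circ h$ is quasiconformal on $\mathbb{C}_\infty\setminus E$, hence extends to a quasiconformal self-map $\Psi$ of $\mathbb{C}_\infty$ by $(ii)$, and $\Psi\circ h^{-1}$ is then a quasiconformal extension of $\psi$; thus $K$ is $QC$-removable. To exploit this I must first \emph{globalize} $g$, which is only defined on the punctured neighborhood $h(V)\setminus K$ of $K$. Choosing a smaller neighborhood $V'$ of $E$ with $\overline{V'}\subset V$, I would interpolate on the annular collar $h(V)\setminus h(\overline{V'})$ between $g$ and the identity, keeping $g$ on $h(V')\setminus K$ and the identity outside $h(V)$, so as to obtain a quasiconformal self-homeomorphism $G$ of $\mathbb{C}_\infty\setminus K$ that coincides with $g$ near $K$. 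Applying $QC$-removability of $K$ to $G$ then produces a quasiconformal extension over $K$ which agrees with $g$ near $K$; this is the sought extension $\tilde g$, and pulling back by $h$ and gluing completes the proof.

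The main obstacle is the globalization step: producing a genuine quasiconformal homeomorphism $G$ of $\mathbb{C}_\infty\setminus K$ that agrees with $g$ in a neighborhood of $K$. Merely patching $g$ to the identity across the collar yields a map that is quasiconformal \emph{locally} and of bounded dilatation (the conformality of $g$ makes the interpolation smooth on a compact collar, so the dilatation cannot blow up), but one must also ensure that the resulting map is \emph{globally injective}. This is where the argument requires the standard quasiconformal interpolation and extension machinery — for instance working on a thin collar on which $g$ is close to an affine map, or appealing to a Beurling--Ahlfors-type extension across the two boundary curves of the collar. Everything else — the solvability of the Beltrami equation, the conformality of $g=f\circ h^{-1}$, and the final gluing of two quasiconformal maps that agree on an open set — is routine.
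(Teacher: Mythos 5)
Your proof is correct in outline, and at its core it rests on the same ingredient as the paper's: the paper disposes of (ii)$\Rightarrow$(i) in one line by citing classical quasiconformal extension theorems (Lehto--Virtanen, Ch.~II, Thm.~8.3), and the step you yourself single out as the crux --- producing a globally injective quasiconformal self-map $G$ of $\mathbb{C}_\infty\setminus K$ that agrees with the given map near $K$ by interpolating with the identity across a collar --- is precisely what those cited extension theorems provide. Two remarks on where your route differs. First, the detour through the measurable Riemann mapping theorem is superfluous: the interpolation is not made easier by first straightening $f$ into a conformal map $g$ (a quasiconformal $f$ already has uniformly bounded dilatation on a compact collar), so you could interpolate $f$ directly with the identity on a collar inside $U\setminus E$ to get a quasiconformal self-map of $\mathbb{C}_\infty\setminus E$ agreeing with $f$ near $E$, and then apply $QC$-removability of $E$ itself; this avoids the auxiliary set $K=h(E)$ and the (correct but unnecessary) verification that $K$ is $QC$-removable. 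Second, besides the injectivity of the interpolated map $G$, which you do flag, the final gluing also requires a word: one must rule out that a point of $E$ and a point of $U$ far from the collar are sent to the same image by the glued map. This follows from a short openness argument using the fact that a $QC$-removable set has empty interior, but it is not automatic and is worth recording, since it is the same issue that the classical extension theorems are quietly handling.
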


\begin{proof}
The direct implication is trivial, while the converse follows from classical quasiconformal extension theorems, see e.g. \cite[Chapter II, Theorem 8.3]{LEH}.
\end{proof}
A remarkable consequence of the measurable Riemann mapping theorem is that the notions of $QC$-removability and $S$-removability actually coincide.

\begin{proposition}
\label{propQC}
A compact set $E$ is $QC$-removable if and only if it is $S$-removable.
\end{proposition}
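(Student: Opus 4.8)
The plan is to prove both implications with the measurable Riemann mapping theorem (Theorem~\ref{MRMT}) as the main tool, using in addition that quasiconformal mappings preserve sets of zero area and that a $1$-quasiconformal map is conformal (Weyl's lemma).

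For the implication $QC\Rightarrow S$, I would start from a conformal map $f$ on $\Omega$. Being $1$-quasiconformal, $f$ is in particular quasiconformal on $\Omega$, so $QC$-removability provides a quasiconformal extension $\hat f$ of $f$ to $\mathbb{C}_\infty$. The point is then to upgrade $\hat f$ to a M\"obius transformation. Since $\hat f$ is conformal on $\Omega$, its Beltrami coefficient $\mu_{\hat f}$ vanishes on $\Omega$; provided $E$ has zero area, $\mu_{\hat f}=0$ almost everywhere on $\mathbb{C}_\infty$, and Weyl's lemma forces $\hat f$ to be conformal on the whole sphere, hence a M\"obius transformation restricting to $f$. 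Thus the only thing left to check is that a $QC$-removable set has zero area. This is the analogue of Proposition~\ref{RemArea} and is obtained along the same lines: if $|E|>0$ one produces, as in the remarks following Proposition~\ref{RemArea}, a homeomorphism of the sphere that is conformal off $E$ but fails to preserve area, hence fails to be quasiconformal globally; its restriction to $\Omega$ is quasiconformal on $\Omega$ yet admits no quasiconformal extension, so $E$ is not $QC$-removable.

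For the reverse implication $S\Rightarrow QC$, Proposition~\ref{RemArea} first gives $|E|=0$. Given a quasiconformal map $f$ on $\Omega$, I would set $\mu:=\mu_f$ on $\Omega$ and $\mu:=0$ on $E$, so that $\|\mu\|_\infty<1$, and apply Theorem~\ref{MRMT} on $\mathbb{C}_\infty$ to obtain a global quasiconformal map $g$ with $\mu_g=\mu$. By the uniqueness statement in Theorem~\ref{MRMT}, the map $\phi:=f\circ g^{-1}$ is conformal on $g(\Omega)=\mathbb{C}_\infty\setminus g(E)$, and $f=\phi\circ g$. Since $g$ is quasiconformal and $|E|=0$, the set $g(E)$ again has zero area. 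If one knows that $g(E)$ is $S$-removable, then $\phi$ is a M\"obius transformation and $f=\phi\circ g$ is the restriction of a global quasiconformal map, as desired.

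The crux of the argument, and the step I expect to be the main obstacle, is precisely this last point: that the straightened set $g(E)$ inherits $S$-removability, i.e.\ that $S$-removability is a quasiconformal invariant. Note that zero area alone does not suffice, since there exist non-$S$-removable sets of zero area. One clean way to secure the invariance is to observe that $QC$-removability is transparently quasiconformally invariant: if $w$ is a global quasiconformal map and $\psi$ is quasiconformal on $\mathbb{C}_\infty\setminus w(E)=w(\Omega)$, then $\psi\circ w$ is quasiconformal on $\Omega$, extends by $QC$-removability of $E$ to a global quasiconformal $\Psi$, and $\Psi\circ w^{-1}$ is then a global quasiconformal extension of $\psi$; hence $w(E)$ is $QC$-removable whenever $E$ is. Transferring this invariance to the class $S$ through the equivalence being proved, or alternatively invoking the characterization of $S$-removable sets as those of absolute area zero (a manifestly quasiconformally invariant notion), then closes the gap and completes the proof.
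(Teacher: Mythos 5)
Your forward implication is essentially the paper's: extend the conformal map quasiconformally, note that the Beltrami coefficient of the extension vanishes off $E$, and invoke Weyl's lemma once $E$ is known to have zero area. The only divergence is how you obtain $m(E)=0$. The paper maps $\Omega$ conformally onto the complement of a null set (Koebe), extends that map quasiconformally, and uses that quasiconformal maps preserve null sets; you instead invoke the Kaufman--Wu map from the remarks after Proposition~\ref{RemArea}. That route can be made to work, but you skip a step: the Kaufman--Wu map $f$ is a global homeomorphism, and to conclude that $f|_\Omega$ admits \emph{no} quasiconformal extension you must know that any homeomorphic extension of $f|_\Omega$ agrees with $f$ on the positive-area set being collapsed, which requires $E$ to have empty interior (so that $\Omega$ is dense). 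The nonempty-interior case must be handled separately; the Koebe argument needs no such case distinction.

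The genuine gap is in the reverse implication, and you have correctly located it without closing it. By straightening $\mu_f$ on the domain side you write $f=\phi\circ g$ with $\phi$ conformal off $g(E)$, so you need $g(E)$ --- not $E$ --- to be $S$-removable, i.e.\ you need quasiconformal invariance of $S$-removability. Your first proposed fix (transferring the evident quasiconformal invariance of $QC$-removability to $S$-removability ``through the equivalence being proved'') is circular. Your second fix overclaims: absolute area zero is \emph{not} manifestly quasiconformally invariant, since a conformal map on $\mathbb{C}_\infty\setminus g(E)$ pulls back only to a \emph{quasiconformal} map on $\Omega$, and straightening that map reproduces exactly the problem you are trying to solve. (The quasiconformal invariance of this null class is a true but nontrivial classical theorem, and Theorem~\ref{AhlforsB} is in any case proved later in the paper.) The paper sidesteps all of this by correcting on the image side: choose the global quasiconformal map $g$ with $\mu_g=\mu_{f^{-1}}$ on $f(\Omega)$ and $0$ elsewhere. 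The uniqueness part of Theorem~\ref{MRMT}, applied on $f(\Omega)$ to the maps $g$ and $f^{-1}$, shows that $g\circ f$ is conformal on $\Omega$ \emph{itself}; $S$-removability of $E$ then forces $g\circ f$ to be a M\"obius transformation $T$, and $f=g^{-1}\circ T$ is the desired global quasiconformal extension. Rewriting your second half with this post-composition closes the gap.
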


\begin{proof}
Assume that $E$ is $QC$-removable. First note that the area of $E$ must be zero, by a classical result of Koebe saying that every domain can be mapped conformally onto the complement of a set of zero area. Now, if $f$ is any conformal map on $\Omega$, then in particular $f$ is quasiconformal outside $E$ so that it extends quasiconformally to the whole sphere, by $QC$-removability of $E$. But then by Weyl's lemma, the map $f$ must be a M\"{o}bius transformation. Thus $E$ is $S$-removable.

Conversely, if $E$ is $S$-removable, let $g$ be any quasiconformal mapping on $\Omega$. By Theorem \ref{MRMT}, there exists a quasiconformal mapping $f:\mathbb{C}_\infty \to \mathbb{C}_\infty$ such that $f \circ g$ is conformal on $\Omega$. Since $E$ is $S$-removable, the map $f \circ g$ is a M\"{o}bius transformation and thus $g= f^{-1} \circ (f \circ g)$ extends quasiconformally to the whole sphere. Since $g$ was arbitrary, we get that $E$ must be $QC$-removable.

\end{proof}

An interesting consequence of the above result is that Proposition \ref{PropExtendQC} remains true without the assumption that $E \subset U$ in $(i)$. Before proving this, we need the following topological lemma.

\begin{lemma}
\label{lemtotdisc}
Let $X$ be a totally disconnected compact Hausdorff space. Suppose that $F_1$ and $F_2$ are two disjoint closed subsets of $X$. Then there exist disjoint closed subsets $X_1$ and $X_2$ of $X$ such that $X = X_1 \cup X_2$, $F_1 \subset X_1$ and $F_2 \subset X_2$.
\end{lemma}

\begin{proof}
The result easily follows from the fact that such spaces $X$ are zero-dimensional.

\end{proof}

We can now prove the following generalization of Proposition \ref{PropExtendQC}.

\begin{proposition}
\label{PropExtendQC2}
The following are equivalent :
\begin{enumerate}[\rm(i)]
\item For any open set $U \subset \mathbb{C}$, every quasiconformal mapping on $U \setminus E$ extends quasiconformally to the whole open set $U$;
\item $E$ is $S$-removable.
\end{enumerate}
\end{proposition}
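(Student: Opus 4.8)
The plan is to reduce everything to the local characterization already in hand, using that $S$-removability coincides with $QC$-removability (Proposition \ref{propQC}) and that $S$-removable sets are totally disconnected. The implication $(i) \Rightarrow (ii)$ is immediate: specializing $(i)$ to open sets $U$ containing $E$ recovers exactly condition $(i)$ of Proposition \ref{PropExtendQC}, so $E$ is $QC$-removable, hence $S$-removable by Proposition \ref{propQC}.

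For $(ii) \Rightarrow (i)$ the difficulty is that $U$ need not contain all of $E$, so $U \cap E$ is generally not compact and Proposition \ref{PropExtendQC} does not apply directly. I would argue locally: given $f$ quasiconformal on $U \setminus E$, it suffices to extend $f$ quasiconformally to a neighborhood of each $p \in U \cap E$, since on $U \setminus E$ the map is already quasiconformal. Fix such a $p$ and choose $r>0$ with $\overline{B(p,r)} \subset U$. As $E$ is $S$-removable it is totally disconnected and compact, so applying Lemma \ref{lemtotdisc} to the disjoint closed sets $F_1 := E \cap \overline{B(p,r/2)}$ and $F_2 := E \setminus B(p,r)$ yields a decomposition $E = X_1 \cup X_2$ into disjoint compact clopen pieces with $p \in F_1 \subset X_1$ and $F_2 \subset X_2$. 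The gain is that $X_1 \subset E \setminus F_2 = E \cap B(p,r) \subset U$, so $X_1$ is a \emph{compact} subset of $U$; moreover $X_1$, being a subset of the $S$-removable set $E$, is itself $S$-removable and hence $QC$-removable.

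I would then pick $\rho>0$ smaller than both $\operatorname{dist}(X_1,X_2)$ and $\operatorname{dist}(X_1,\mathbb{C}\setminus U)$ (both positive, since $X_1$ is compact, disjoint from the compact set $X_2$, and contained in the open set $U$) and set $W := \{z : \operatorname{dist}(z,X_1) < \rho\}$. Then $X_1 \subset W \subset U$ and $W \cap X_2 = \emptyset$, so $W \cap E \subset X_1$ and therefore $W \setminus X_1 \subset U \setminus E$. Consequently $f$ restricts to a quasiconformal map on $W \setminus X_1$, and since $X_1$ is compact, $QC$-removable, and contained in $W$, Proposition \ref{PropExtendQC} furnishes a quasiconformal extension of $f$ to $W$, in particular to a neighborhood of $p$.

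Finally I would glue these local extensions. They agree on overlaps, being continuous and coinciding with $f$ on the dense set $U \setminus E$ (recall $E$ has empty interior), so they define a continuous map $\tilde f$ on $U$. Quasiconformality being a local property, $\tilde f \in W^{1,2}_{loc}(U)$ satisfies the Beltrami equation almost everywhere, and the dilatation bound is preserved because each $X_1 \subset E$ has zero area (Proposition \ref{RemArea}), whence $\mu_{\tilde f} = \mu_f$ a.e. The one point deserving care is global injectivity: $\tilde f$ is a local homeomorphism that is injective off the null set $E$, and since quasiconformal maps carry sets of zero area to sets of zero area, one checks that two distinct points of $U$ cannot share an image, so $\tilde f$ is in fact a homeomorphism of $U$ onto an open set. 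I expect the reduction to the compact case via total disconnectedness to be the crux of the argument, the gluing step being routine apart from this injectivity verification.
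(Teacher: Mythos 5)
Your proof is correct, and the core mechanism --- using total disconnectedness via Lemma \ref{lemtotdisc} to split off a compact, $S$-removable piece of $E$ sitting inside an open set where Propositions \ref{propQC} and \ref{PropExtendQC} apply --- is exactly the one the paper uses; what differs is how the splitting is organized. You decompose locally around each point $p \in U \cap E$, taking $F_1 = E \cap \overline{B(p,r/2)}$ and $F_2 = E \setminus B(p,r)$, and then glue the resulting local extensions, which forces you to verify global injectivity of the glued map by hand; your sketch of that verification (a local homeomorphism whose failure of injectivity would, since quasiconformal maps preserve null sets and $E$ has zero area, produce two distinct preimages in $U \setminus E$ of a common point, contradicting injectivity of $f$ there) is sound. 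The paper instead makes a single global split for each $\epsilon > 0$, taking $F_1 = \{z \in E : \operatorname{dist}(z,\mathbb{C}\setminus U) \geq \epsilon\}$ and $F_2 = E \setminus U$, which yields one quasiconformal --- hence already injective --- extension to $U \setminus E_2$, an open set containing $\{z \in U : \operatorname{dist}(z,\mathbb{C}\setminus U) > \epsilon\}$; letting $\epsilon \to 0$ exhausts $U$ by nested sets on each of which injectivity is automatic, so no separate check is needed. In both arguments the consistency of the various extensions (across different $\epsilon$, or on overlaps of your neighborhoods $W$) comes from continuity together with density of $U \setminus E$. In short, your local-to-global version is equivalent but pays for the extra injectivity verification, which the paper's exhaustion buys for free.
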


\begin{proof}
The direct implication follows trivially from Proposition \ref{propQC}. Conversely, assume that $E$ is $S$-removable. Let $U$ be any open set, let $f$ be a quasiconformal mapping on $U \setminus E$ and let $\epsilon>0$. Define $F_1:=\{z \in E : \operatorname{dist}(z,\mathbb{C} \setminus U) \geq \epsilon\}$ and $F_2:=E \setminus U$. Then $F_1$ and $F_2$ are two disjoint closed subsets of $E$, so by Lemma \ref{lemtotdisc} there exist two disjoint closed subsets of $E$, say $E_1$ and $E_2$, such that $E = E_1 \cup E_2$, $F_1 \subset E_1$ and $F_2 \subset E_2$. Here we used the fact that $S$-removable sets are totally disconnected. Clearly, $E_1$ is $S$-removable. Since $f$ is quasiconformal on $(U \setminus E_2) \setminus E_1$ and $E_1$ is a compact subset of the open set $U \setminus E_2$, it follows from Proposition \ref{propQC} and Proposition \ref{PropExtendQC} that $f$ extends quasiconformally to $U \setminus E_2$. This extension is well-defined for every $z \in U$ such that $\operatorname{dist}(z,\mathbb{C} \setminus U) > \epsilon$, independently of the precise value of $\epsilon$ since $E$ has empty interior. Since $\epsilon>0$ is arbitrary, it follows that $f$ extends quasiconformally to the whole open set $U$.

\end{proof}
As in Proposition \ref{union} and Proposition \ref{unionA}, we can deduce from this that unions of removable sets are removable.

\begin{proposition}
\label{unionS}
If $E$ and $F$ are $S$-removable compact sets, then $E \cup F$ is also $S$-removable.
\end{proposition}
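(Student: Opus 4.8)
The plan is to imitate the proof of Proposition \ref{union}, using the quasiconformal locality statement of Proposition \ref{PropExtendQC2} in place of Lemma \ref{LemExtension}. The crucial identity is $\mathbb{C}_\infty \setminus (E \cup F) = (\mathbb{C}_\infty \setminus F) \setminus E$, which lets me regard a conformal map on $\Omega$ as a quasiconformal map defined on the open set $U := \mathbb{C}_\infty \setminus F$ with the $S$-removable set $E$ deleted. First I would fix an arbitrary conformal map $f$ on $\Omega$ and, composing with Möbius transformations if necessary, normalize so that $f$ takes values in $\mathbb{C}$ (to prevent a pole from interfering with the $W^{1,2}_{loc}$ condition) and so that the point at infinity is accommodated by Proposition \ref{PropExtendQC2}, whose statement is phrased for plane open sets $U \subset \mathbb{C}$.

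Next, since $E$ is $S$-removable, Proposition \ref{PropExtendQC2} applies to the quasiconformal mapping $f$ on $U \setminus E = \Omega$ and produces a quasiconformal extension $\tilde f$ to the whole open set $U = \mathbb{C}_\infty \setminus F$. I would then argue that this extension is in fact conformal: on $\Omega$ it agrees with $f$, so $\mu_{\tilde f} = 0$ almost everywhere there, while $E$ has zero area by Proposition \ref{RemArea}; hence $\mu_{\tilde f} = 0$ almost everywhere on $U$, and Weyl's lemma shows that $\tilde f$ is $1$-quasiconformal, i.e. conformal. Being a homeomorphism onto its image, $\tilde f$ is thus a conformal map on $\mathbb{C}_\infty \setminus F$. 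Finally, $S$-removability of $F$ forces $\tilde f$ to be a Möbius transformation, so its restriction $f$ is Möbius as well. As $f$ was arbitrary, $E \cup F$ is $S$-removable.

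I expect the only genuine difficulty to be bookkeeping rather than conceptual: one must carry out the Möbius normalizations carefully so that the point at infinity and any pole of $f$ do not obstruct the application of Proposition \ref{PropExtendQC2} (stated for $U \subset \mathbb{C}$ and for honest quasiconformal maps in $W^{1,2}_{loc}$), and one must check that the passage from a vanishing Beltrami coefficient to conformality via Weyl's lemma is legitimate on the enlarged domain. Note that total disconnectedness of $E$ is still used, but only implicitly, inside Proposition \ref{PropExtendQC2} through Lemma \ref{lemtotdisc}; this is precisely what the remark following Proposition \ref{union} alludes to. Should the normalization prove delicate, an alternative is to split $E$ itself via Lemma \ref{lemtotdisc} and localize directly, exactly as in the proof of Proposition \ref{PropExtendQC2}.
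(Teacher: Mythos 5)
Your proposal is correct and follows essentially the same route as the paper: both arguments hinge on Proposition \ref{PropExtendQC2} to extend the conformal map quasiconformally across one of the two removable sets, and on the zero-area property plus Weyl's lemma to recover conformality. The only cosmetic difference is that you extend across $E$ into $\mathbb{C}_\infty \setminus F$ and then invoke $S$-removability of $F$ directly, whereas the paper extends quasiconformally across $F$ and then across $E$ to the whole sphere before applying Weyl's lemma once at the end.
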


\begin{proof}
Any conformal map on $\mathbb{C}_\infty \setminus (E \cup F) = (\mathbb{C}_\infty \setminus E) \setminus F$ extends quasiconformally to $\mathbb{C}_\infty \setminus E$ and then to the whole sphere, by Proposition \ref{PropExtendQC2}. Since $E \cup F$ has zero area, the extension must be conformal everywhere.

\end{proof}

\begin{remark}
Again, a simple argument using Proposition \ref{PropExtendQC2} and the Baire category theorem shows that any compact countable union of $S$-removable compact sets is $S$-removable.
\end{remark}

\subsection{A characterization of removability}

In this subsection, we present a characterization of $S$-removable sets due to Ahlfors and Beurling \cite{AHB} and based on ideas of Grunsky. First, we need a definition.

\begin{definition}
We say that a compact set $E$ has \textit{absolute area zero} if for every conformal map $f \in S(\Omega)$, the complement of $f(\Omega)$ has measure zero.
\end{definition}
Note that sets of absolute area zero must be totally disconnected. We also mention that a sufficient condition for a set $E$ to have absolute area zero in terms of the moduli of nested annuli surrounding each point of $E$ can be found in McMullen's book \cite[Theorem 2.16]{MCM}.

\begin{theorem}[Ahlfors--Beurling \cite{AHB}]
\label{AhlforsB}
A compact set $E \subset \mathbb{C}$ is $S$-removable if and only if it has absolute area zero.
\end{theorem}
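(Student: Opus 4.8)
The plan is to prove the two implications separately, using the measurable Riemann mapping theorem (Theorem \ref{MRMT}) as the engine in both directions, exactly as in the proof of Proposition \ref{RemArea}. For the contrapositive of the forward implication, suppose $E$ does \emph{not} have absolute area zero: there is a conformal map $f \in S(\Omega)$ such that $\mathbb{C}_\infty \setminus f(\Omega)$ has positive area. Let $K$ denote this complement, a compact set of positive measure. The idea is to manufacture a new conformal map on $\Omega$ that is \emph{not} a M\"obius transformation, witnessing non-removability. To do this, I would apply the measurable Riemann mapping theorem to produce a quasiconformal map $g$ on $\mathbb{C}_\infty$ whose Beltrami coefficient is supported on $K$ (say $\mu_g = \tfrac{1}{2}\chi_K$), so that $g$ is conformal on $f(\Omega)$. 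Then $g \circ f$ is conformal on $\Omega$, and it fails to be M\"obius because $g$ is genuinely quasiconformal (its dilatation is nonzero on the positive-area set $K$, and conjugating by the conformal $f$ cannot make $g\circ f$ M\"obius). Hence $E$ is not $S$-removable.

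For the reverse implication, I would again argue by contrapositive: assume $E$ is not $S$-removable, so there exists a non-M\"obius conformal map $h \in S(\Omega)$, and I want to show $E$ fails to have absolute area zero, i.e.\ produce a conformal map on $\Omega$ whose image omits a set of positive area. The natural candidate is $h$ itself, and the content is to show that $\Omega' := h(\Omega)$ has complement of positive measure. The key observation is that since $h$ is conformal but not M\"obius, the two domains $\Omega$ and $\Omega'$ are genuinely different complements; the strategy is to relate the area of $\mathbb{C}_\infty \setminus \Omega'$ to the failure of $h$ to extend. I expect the cleanest route is the Grunsky-inequality / area-theorem approach alluded to in the subsection heading: expand $h$ near each omitted boundary piece and use a Grunsky-type area inequality to show that if the omitted set had zero area, then $h$ would have to be a M\"obius transformation, contradicting our assumption.

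The main obstacle, and the genuinely nontrivial part, is this reverse direction. The forward direction is essentially a repackaging of the quasiconformal surgery already used in Proposition \ref{RemArea}, so it should go through smoothly. The reverse direction requires the classical area-theorem machinery: one must carefully set up the Grunsky coefficients of the conformal map $h$ on the multiply- (indeed infinitely-) connected domain $\Omega$ and establish the sharp area inequality that forces $h$ to be M\"obius whenever the complement of the image has zero area. The delicate points will be handling the infinitely-connected / totally disconnected structure of $E$ (recall $S$-removable, and more generally absolute-area-zero, sets are totally disconnected), ensuring the relevant series converge, and extracting the equality case of the area inequality. I would organize the argument so that the quantity $|h'(\infty)|^2$ minus the area of the image complement appears as a manifestly nonnegative Grunsky sum, with equality precisely when all Grunsky coefficients vanish, i.e.\ when $h$ is linear; this equality analysis is where the real care is needed.
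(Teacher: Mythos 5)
Your reverse direction rests on a false statement. You propose to take the given non-M\"obius conformal map $h$ on $\Omega$ and show that $\mathbb{C}_\infty \setminus h(\Omega)$ must have positive area, i.e.\ that ``$h$ conformal and non-M\"obius with zero-area image complement'' is impossible. It is not: take $E=[-2,2]$ and let $h$ map $\Omega=\mathbb{C}_\infty\setminus E$ conformally onto the complement of an analytic arc $\gamma$ that is not a circular arc (both domains are conformally equivalent to $\mathbb{C}_\infty\setminus\overline{\mathbb{D}}$, hence to each other). Then $E$ and $\gamma$ both have zero area, and $h$ is not M\"obius since a M\"obius map would carry the segment onto a circular arc. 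So no Grunsky/area-theorem argument can force $h$ to be M\"obius from the vanishing of the area of \emph{its own} image complement; the hypothesis of absolute area zero is a statement about \emph{all} conformal images, and the witness of positive area is in general a different map from the given $h$ (for $E=[-2,2]$ it is the inverse Joukowski map, whose image complement is $\overline{\mathbb{D}}$). The paper's proof uses the universal quantifier in an essential way: it introduces the extremal quantities $\eta(E)$ (over holomorphic functions with Dirichlet integral at most $\pi$) and $\beta(E)$ (over conformal maps omitting area at least $\pi$), proves $\eta(E)=\beta(E)$ by the slit-mapping/Grunsky machinery, and then argues that absolute area zero gives $\beta(E)=0$, hence $\eta(E)=0$, hence every conformal map with finite Dirichlet integral near $E$ extends analytically across $E$ by the Cauchy-integral decomposition of Proposition \ref{propextend}. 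Your sketch contains none of this passage through the Dirichlet class, which is where the actual content lies.

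The forward direction is essentially fine but both roundabout and incompletely justified: your claim that $g\circ f$ ``fails to be M\"obius because $g$ is genuinely quasiconformal'' needs an argument, since $g$ restricted to $f(\Omega)$ is conformal and nothing is contradicted a priori. One can patch it: if $g\circ f=T$ were M\"obius then $T(E)=g(K)$ would have positive area (quasiconformal maps preserve null sets), forcing $E$ to have positive area, and then $E$ is not $S$-removable by Proposition \ref{RemArea} anyway. The paper's forward implication is a one-liner by comparison: if $E$ is $S$-removable then $E$ has zero area and every conformal $f$ on $\Omega$ is M\"obius, so $\mathbb{C}_\infty\setminus f(\Omega)=f(E)$ has zero area.
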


Note that the direct implication follows from Proposition \ref{RemArea}. Indeed, if $E$ is $S$-removable, then it must have zero area and every conformal map $f$ on $\Omega$ is a M\"{o}bius transformation, so that $\mathbb{C}_\infty \setminus f(\Omega) = f(E)$ has zero area.

For the converse, we follow \cite{AHB} and introduce two extremal problems.

Let $E \subset \mathbb{C}$ be compact and as usual denote by $\Omega$ the complement of $E$ in the Riemann sphere. We assume that $\Omega$ is connected. Define
$$\eta(E):=\sup |f'(\infty)|,$$
where the supremum is taken over all holomorphic functions $f$ on $\Omega$ with $f(\infty)=0$ and whose Dirichlet integral $D(f)$ satisfies
$$D(f):=\int_{\Omega} |f'|^2 dm \leq \pi.$$
Also, let
$$\beta(E):=\sup |f'(\infty)|$$
where the supremum is taken over all conformal maps $f \in S(\Omega)$ with $f(\infty)=0$ and having the property that the complement of $(1/f)(\Omega)$ has area greater or equal to $\pi$. If there is no such function, we set $\beta(E)=0$.

Ahlfors and Beurling's remarkable result states that $\eta(E)$ and $\beta(E)$ are actually equal, for any compact set $E$. This implies the reverse implication in Theorem \ref{AhlforsB}. Indeed, suppose that $E$ has absolute area zero. Then $\Omega$ is connected and $0=\beta(E)=\eta(E)$. Let $f$ be conformal on $\Omega$. Without loss of generality, assume that $f(\infty)=\infty$. Then $f$ is bounded near $E$, so its Dirichlet integral there is finite. Then, since $\eta(E)=0$, a simple modification of the proof of Proposition \ref{propextend} shows that $f$ has an analytic extension to the whole complex plane. This extension must be conformal, so that $f$ is linear. Therefore $E$ is $S$-removable.

\begin{theorem}[Ahlfors--Beurling \cite{AHB}]
For any compact set $E$, we have
$$\eta(E)=\beta(E).$$

\end{theorem}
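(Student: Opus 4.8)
The plan is to compute both extremal quantities and show they agree, using throughout that the Dirichlet integral $D(f)=\int|f'|^2\,dm$ is conformally invariant. First I would recast $\eta$ as a Hilbert space problem. On the space $\mathcal D$ of holomorphic functions on $\Omega$ vanishing at $\infty$ with finite Dirichlet integral, with inner product $\langle f,g\rangle=\int_\Omega f'\overline{g'}\,dm$, the functional $L(f)=f'(\infty)=\frac1{2\pi i}\oint f\,dz$ is bounded (this is precisely the finiteness of $\eta$), so by Riesz representation $L(f)=\langle f,k\rangle$ for a unique $k$, and a normal-families argument yields a unique extremal $f_0=\sqrt\pi\,k/\|k\|$ with $D(f_0)=\pi$ and $f_0'(\infty)=\eta(E)=\sqrt\pi\,\|k\|$. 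On the other side, after the substitution $\psi=1/f$ and a rescaling of $\psi$ to unit leading coefficient (the constraint is saturated at the optimum, forcing the omitted area to equal $\pi$), the definition of $\beta$ becomes $\beta(E)^2=\frac1\pi\sup_\psi\operatorname{area}(\mathbb C\setminus\psi(\Omega))$, the supremum over all conformal $\psi(z)=z+c_0+c_1/z+\cdots$ on $\Omega$.

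For $\beta\le\eta$ I would prove the isoperimetric inequality $\operatorname{area}(E')\le\pi\,\eta(E')^2$ for every compact $E'$. Take $f=\mathcal C(\chi_{E'}\,dm)$; by Lemma \ref{CauchyDist}, $\overline\partial f=-\pi\chi_{E'}$, and $f'(\infty)=-\operatorname{area}(E')$. Since $f(z)=O(1/z)$ at infinity, $\partial f\in L^2(\mathbb C)$ and the Jacobian integrates to zero, $\int_{\mathbb C}(|\partial f|^2-|\overline\partial f|^2)\,dm=0$; hence $D(f)=\int_{\mathbb C\setminus E'}|\partial f|^2\,dm\le\|\partial f\|^2_{L^2(\mathbb C)}=\|\overline\partial f\|^2_{L^2(\mathbb C)}=\pi^2\operatorname{area}(E')$. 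Feeding this into the definition of $\eta$ gives $\eta(E')^2\ge\pi\,|f'(\infty)|^2/D(f)\ge\operatorname{area}(E')/\pi$, as claimed. Now, given a competitor $\psi=z+\cdots$ and setting $E':=\mathbb C\setminus\psi(\Omega)$, the map $f\mapsto f\circ\psi^{-1}$ is a bijection of competitors that preserves both $D$ (conformal invariance) and the value at infinity (because $\psi$ has unit leading coefficient), so $\eta(E')=\eta(E)$; combined with the isoperimetric inequality this gives $\operatorname{area}(\mathbb C\setminus\psi(\Omega))\le\pi\,\eta(E)^2$, and taking the supremum yields $\beta\le\eta$.

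For the reverse inequality I would show that the analytic extremal $f_0$ is itself an extremal $\beta$-competitor. The key lemma is that $f_0$ is univalent: the variational equation characterizing the reproducing-kernel extremal should force $f_0'$ to be zero-free and $f_0$ to map $\Omega$ conformally onto the complement of a compact set $E'$ that realizes equality in the isoperimetric inequality above. Granting univalence, $\psi_0:=\eta/f_0$ is a conformal map of the form $z+\cdots$, and verifying the two equality conditions of the previous paragraph—that $\mathcal C\chi_{E'}$ is proportional to the reproducing kernel of $E'$ and that $\partial(\mathcal C\chi_{E'})$ vanishes on $E'$—shows $\operatorname{area}(\mathbb C\setminus\psi_0(\Omega))=\pi\,\eta(E)^2$. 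Hence $\beta^2\ge\eta^2$, completing the proof.

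The main obstacle is the clash between the linear problem defining $\eta$ and the univalence-constrained problem defining $\beta$: the whole theorem hinges on showing that the solution of the linear problem is automatically schlicht and saturates the isoperimetric bound, and on carrying out the area bookkeeping for a compact set $E$ carrying no boundary regularity. I would handle the analytic subtleties by exhausting $\Omega$ by finitely connected domains with analytic boundary, applying Green's theorem there, and passing to the limit, relying on the uniqueness of the extremal functions (from strict convexity of the Dirichlet norm) to guarantee that the approximations converge to the correct objects.
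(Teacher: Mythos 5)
Your first half --- the inequality $\beta(E)\le\eta(E)$ via the isoperimetric bound $\operatorname{area}(E')\le\pi\,\eta(E')^2$ (proved with $f=\mathcal{C}(\chi_{E'}\,dm)$ and the $L^2$-isometry $\|\partial f\|_2=\|\overline{\partial}f\|_2$) together with the conformal invariance $\eta(\mathbb{C}\setminus\psi(\Omega))=\eta(E)$ for normalized univalent $\psi$ --- is correct and is a genuinely different, and arguably cleaner, route than the paper's, which obtains the same inequality by computing $I(\phi)\ge-\tfrac{\pi}{2}|c|^2(a-b)$ against the slit maps $g$ and $h$.

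The second half, however, rests on a false key lemma. The extremal function for the Dirichlet problem defining $\eta$ is \emph{not} univalent in general, so it cannot serve as a $\beta$-competitor. Concretely, when $E$ consists of finitely many analytic Jordan domains, the unique $\eta$-extremal is $f_0=(g-h)/\sqrt{2(a-b)}$, where $g(z)=z+a/z+\cdots$ and $h(z)=z+b/z+\cdots$ are the horizontal and vertical slit maps. Take $E$ to be two disks of small radius $r$ centered at $\pm R$. To first order in $r^2$ one has $g(z)-h(z)\approx 2r^2\bigl(\tfrac{1}{z-R}+\tfrac{1}{z+R}\bigr)=\tfrac{4r^2z}{z^2-R^2}$, and the map $z\mapsto z/(z^2-R^2)$ identifies any pair $z_1,z_2$ with $z_1z_2=-R^2$ (e.g.\ $z_1=2iR$, $z_2=iR/2$, both well inside $\Omega$); by Hurwitz's theorem $g-h$ itself fails to be injective for $r$ small. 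By contrast, the function that attains $\beta(E)$ is $\sqrt{2(a-b)}/(g+h)$ --- the reciprocal of the \emph{sum} of the slit maps, whose univalence follows from the argument principle --- and it is a different function from $f_0$ even though it produces the same value $\sqrt{(a-b)/2}$. So the equality $\eta=\beta$ is not a statement that one extremal solves both problems; the direction $\eta\le\beta$ genuinely requires exhibiting a separate schlicht competitor and proving a matching upper bound for the linear problem (the paper does both via the pair $g,h$ and the identity $D(g-h)=2\pi(a-b)$). Your "verification of the two equality conditions" cannot be carried out because the object it is to be carried out on does not exist.
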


\begin{proof}
We only give a sketch of the proof.

By a simple normal family argument, it suffices to prove the result for compact sets $E$ that are bounded by finitely many disjoint analytic Jordan curves. In this case, there exist conformal maps $g$ and $h$ of $\Omega$ onto domains bounded by horizontal slits and vertical slits respectively with normalization
$$g(z)=z+\frac{a}{z}+\frac{a_2}{z^2}+\dots$$
and
$$h(z)=z+\frac{b}{z}+\frac{b_2}{z^2}+\dots$$
near infinity. In this case, the maps $g$ and $h$ are unique, see e.g. \cite[Chapter 5, Section 2]{GOL}.

Let $\Gamma$ denote the boundary of $\Omega$ oriented positively and let $f$ be any function regular on $\overline{\Omega}$ and holomorphic on $\Omega$ with $f(\infty)=0$ and $D(f) \leq \pi$. A simple calculation using Green's theorem and the Cauchy-Riemann equations shows that
$$\iint_{\Omega}f'(z)(\overline{g'(z)-h'(z)})dx \, dy = \frac{i}{2} \int_{\Gamma}f(d\overline{g}-d\overline{h}).$$
Since $g(\Omega)$ is a horizontal slit domain and $h(\Omega)$ is a vertical slit domain, we have $d\overline{g}=dg$ and $d\overline{h}=-dh$ on $\Gamma$, thus we obtain
\begin{equation}
\label{eqDir}
\iint_{\Omega}f'(z)(\overline{g'(z)-h'(z)})dx \, dy = \frac{i}{2} \int_{\Gamma}f(dg+dh)=2\pi f'(\infty),
\end{equation}
where we used the fact that
$$\int_{\Gamma}f(z)(g'(z)+h'(z))dz = -2 \pi i (f(g'+h'))'(\infty)=-4\pi i f'(\infty).$$
Replacing $f$ by $g-h$ in (\ref{eqDir}) yields
$$D(g-h) = 2\pi(a-b).$$
Now, by (\ref{eqDir}) and the Cauchy-Schwarz inequality, we get
$$4\pi^2|f'(\infty)|^2 \leq 2\pi(a-b)D(f) \leq 2 \pi^2(a-b)$$
and thus
$$|f'(\infty)| \leq \sqrt{\frac{a-b}{2} },$$
with equality for the function
$$f=\frac{g-h}{\sqrt{2(a-b)}}.$$
One can prove using some approximation process that the above inequality holds even for functions $f$ that are only assumed to be holomorphic on $\Omega$. It follows that
$$\eta(E)=\sqrt{\frac{a-b}{2}}.$$
It only remains to show that the same equality holds with $\eta(E)$ replaced by $\beta(E)$. First, we need to introduce two integral quantities. For functions $\phi, \psi$ regular on $\overline{\Omega}$ and holomorphic on $\Omega$  except a simple pole at infinity, define
$$I(\phi,\psi):=\frac{i}{2}\int_{\Gamma} \phi\, d\overline{\psi}$$
and
$$I(\phi):=\frac{i}{2}\int_{\Gamma} \phi\, d\overline{\phi} = I(\phi,\phi).$$
Note that $I(\phi,\psi) = \overline{I(\psi,\phi)}$.

Now, let $f$ be regular on $\overline{\Omega}$ and holomorphic on $\Omega$ with a simple zero at infinity. Set $\phi:=1/f$. As in (\ref{eqDir}), we have
$$I(\phi,g+h) = \frac{i}{2} \int_{\Gamma} \phi(dg-dh) = -\pi c (a-b),$$
where $c$ is the residue of $\phi$ at infinity. In particular, this holds for $\phi=g+h$ and thus
$$I(g+h)=-2\pi(a-b).$$

Now, it is easy to see that if $\phi_0$ has a removable singularity at infinity, then $I(\phi_0)$ is just the Dirichlet integral of $\phi_0$. It follows that
$$I\left(\phi - \frac{c}{2}(g+h)\right) \geq 0.$$
Therefore,
\begin{eqnarray*}
I(\phi) &=& I\left(\phi-\frac{c}{2}(g+h)\right) + I\left(\phi,\frac{c}{2}(g+h)\right)+I\left(\frac{c}{2}(g+h),\phi\right)-I\left(\frac{c}{2}(g+h)\right)\\
&\geq& 0 - \frac{\pi}{2}|c|^2(a-b)+\frac{c}{2}\overline{I(\phi,g+h)} +\frac{\pi}{2}|c|^2(a-b)\\
&=& -\frac{\pi}{2} |c|^2(a-b).
\end{eqnarray*}

Suppose in addition that $f$ is conformal on $\Omega$ and that the area of the complement of $\phi(\Omega)$ is greater or equal to $\pi$. In this case, Green's theorem shows that $-I(\phi)$ is precisely the area enclosed by $\phi(\Gamma)$ and hence $I(\phi) \leq -\pi$. Combining this with the preceding inequality, we obtain

$$\frac{1}{|c|} \leq \sqrt{\frac{a-b}{2}}.$$
But a simple calculation yields $|c|=1/|f'(\infty)|$, so that
\begin{equation}
\label{eqAHB}
|f'(\infty)| \leq \sqrt{\frac{a-b}{2}}.
\end{equation}
It follows that,
$$\beta(E) \leq \sqrt{\frac{a-b}{2}}.$$
Finally, observe that equality in (\ref{eqAHB}) is attained by the function
$$f=\frac{\sqrt{2(a-b)}}{g+h}.$$
Using the argument principle, one can prove that this function is univalent on $\Omega$. Therefore, we obtain
$$\beta(E)=\sqrt{\frac{a-b}{2}} = \eta(E).$$

\end{proof}

\section{$CH$-removable sets}
\label{sec4}
The last section of this article deals with $CH$-removable sets. Recall that a compact set $E \subset \mathbb{C}$ is said to be $CH$-removable if every homeomorphism of the sphere onto itself that is conformal outside $E$ is a M\"{o}bius transformation.

Besides earlier results on removable product sets by Gehring \cite{GEH}, the notion of $CH$-removability (also sometimes referred to as \textit{conformal removability} or \textit{holomorphic removability}) seems to have first been seriously investigated by Kaufman \cite{KAU2}, who constructed several examples of nonremovable and removable sets. Bishop \cite{BIS2} later gave another construction of similar sets. In recent years, there has been a strong renewal of interest in $CH$-removability, mainly due to its applications in the theory of holomorphic dynamics. In that respect, we mention the work of Jones \cite{JON} whose results were later generalized by Jones and Smirnov \cite{JOS}. Furthermore, Jeremy Kahn recently introduced in his Ph.D. thesis new dynamical methods to prove that some Julia sets of complex quadratic polynomials are $CH$-removable, from which one can deduce that the boundary of the Mandelbrot set is locally connected at the corresponding parameters. This is quite reminiscent of Adrien Douady's philosophy that one ``first plows in the dynamical plane and then harvest in the parameter plane''.

\subsection{Main properties}

Clearly, a compact set $E$ is $CH$-removable whenever it is $S$-removable. In particular, compact sets of absolute area zero are always $CH$-removable in view of Theorem \ref{AhlforsB}. Moreover, it follows from Proposition \ref{RemS} that $H^{\infty}$-removable sets are $CH$-removable. In fact, we have the following stronger statement.

\begin{proposition}
\label{RemCH}
If $E$ is $A$-removable, then $E$ is $CH$-removable.
\end{proposition}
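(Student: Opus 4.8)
The plan is to reduce everything to a single application of $A$-removability by subtracting off the correct affine part of $f$. Let $f \in CH(\Omega)$, so $f$ is a homeomorphism of $\mathbb{C}_\infty$ onto itself that is conformal on $\Omega$. Since $E \subset \mathbb{C}$ is compact we have $\infty \in \Omega$, so $f(\infty)$ is a well-defined point of the sphere; postcomposing $f$ with a suitable M\"{o}bius transformation (which changes neither membership in $CH(\Omega)$ nor the property of being M\"{o}bius) I may assume $f(\infty) = \infty$. Because $f$ is a bijection, $\infty$ is then its only preimage of $\infty$, so the restriction of $f$ to $\mathbb{C}$ is a continuous, finite-valued map of $\mathbb{C}$ into $\mathbb{C}$ that is holomorphic on $\mathbb{C} \setminus E = \Omega \cap \mathbb{C}$.

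Next I would analyze $f$ near infinity. Choosing $R$ with $E \subset \mathbb{D}(0,R)$, the map $f$ is conformal on the neighborhood $\{|z| > R\} \cup \{\infty\}$ of $\infty$ and fixes $\infty$, hence has a simple pole there; the standard computation with $g(w) := 1/f(1/w)$, which is holomorphic near $w=0$ with $g(0)=0$ and $g'(0) \neq 0$, yields an expansion $f(z) = az + b + O(1/|z|)$ as $z \to \infty$, with $a \neq 0$.

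The heart of the argument is then to define $\phi(z) := f(z) - (az + b)$ for $z \in \mathbb{C}$ and $\phi(\infty) := 0$, and to verify that $\phi \in A(\Omega)$. Indeed, $\phi$ is continuous and finite-valued on $\mathbb{C}$, being a difference of two such functions, and the expansion at infinity shows $\phi(z) = O(1/|z|) \to 0$, so $\phi$ extends continuously to $\mathbb{C}_\infty$ with $\phi(\infty) = 0$. Moreover $\phi$ is holomorphic on $\Omega$: on $\Omega \cap \mathbb{C}$ it is a difference of holomorphic functions, and at $\infty$ the expansion $\phi(z) = c_1/z + c_2/z^2 + \cdots$ exhibits $\phi$ as holomorphic there with value $0$. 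Thus $\phi$ is continuous on $\mathbb{C}_\infty$ and holomorphic on $\Omega$, i.e. $\phi \in A(\Omega)$. By $A$-removability $\phi$ is constant, and since $\phi(\infty) = 0$ it is identically zero. Hence $f(z) = az + b$ is affine and, together with $f(\infty) = \infty$, a M\"{o}bius transformation.

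The main obstacle, and the reason the homeomorphism hypothesis is essential, is the verification that $\phi$ genuinely lands in $A(\Omega)$ rather than merely in $H^{\infty}(\Omega)$: one needs $f$ to be finite-valued and continuous across all of $E$, not just bounded near it. This is exactly what being a homeomorphism of the sphere provides once $f(\infty)=\infty$ is arranged, and it is what allows the weaker hypothesis of $A$-removability (weaker than $H^{\infty}$-removability) to already force $f$ to be M\"{o}bius, which is why this statement is stronger than the implication in Proposition \ref{RemS}.
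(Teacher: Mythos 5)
Your proof is correct and follows essentially the same route as the paper, which (via the proof of Proposition \ref{RemS}) normalizes $f(\infty)=\infty$ and applies removability to an auxiliary function built from $f$ — there the difference quotient $z \mapsto (f(z)-f(z_0))/(z-z_0)$, here the difference $f(z)-(az+b)$; both lie in $A(\Omega)$ precisely because $f$, being a homeomorphism of the sphere fixing $\infty$, is continuous and finite across $E$. The only difference is cosmetic: you subtract the affine part extracted from the expansion at $\infty$ rather than dividing by $z-z_0$, and both yield that $f$ is affine, hence M\"{o}bius.
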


\begin{proof}
The proof is the same as in Proposition \ref{RemS}.
\end{proof}
Combining this with Corollary \ref{sigmaf}, we obtain
\begin{corollary}
\label{CorFiniteLengthCH}
If $E$ is a countable union of compact sets of finite one-dimensional Hausdorff measure, then $E$ is $CH$-removable.
\end{corollary}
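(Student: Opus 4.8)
The plan is simply to chain together the two immediately preceding results, since the corollary is a purely formal consequence of them. First I would observe that the hypothesis on $E$—that it is a countable union of compact sets of finite one-dimensional Hausdorff measure—is precisely the hypothesis of Corollary \ref{sigmaf}. Applying that corollary gives that $E$ is $A$-removable. Second, I would invoke Proposition \ref{RemCH}, which asserts that every $A$-removable compact set is $CH$-removable. Composing these two implications yields the claim at once: countable union of finite-length compacta $\Rightarrow$ $A$-removable $\Rightarrow$ $CH$-removable.

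There is no genuine obstacle here, as all the substantive work has already been carried out upstream. The content of Corollary \ref{sigmaf} rests on Besicovitch's theorem (Theorem \ref{RemA}) that sets of finite length are $A$-removable, together with the Baire category argument for passing to countable unions; the content of Proposition \ref{RemCH} rests on the inclusion $A(\Omega) \subset H^{\infty}(\Omega)$ and the same normalization argument used in Proposition \ref{RemS}. Thus the only thing the proof of the corollary must do is record the logical composition. I would also note that this chain fits the broader pattern visible throughout the paper, namely that a smaller class of admissible functions forces a correspondingly larger family of removable sets, so that finiteness (or $\sigma$-finiteness) of length, which suffices for $A$-removability, a fortiori suffices for $CH$-removability.
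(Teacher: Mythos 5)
Your proposal is correct and is exactly the paper's argument: the corollary is obtained by combining Corollary \ref{sigmaf} ($A$-removability of countable unions of finite-length compacta) with Proposition \ref{RemCH} ($A$-removable implies $CH$-removable). Nothing further is needed.
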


A remarkable consequence of the measurable Riemann mapping theorem is that the property of being $CH$-removable is quasiconformally invariant.

\begin{proposition}
Let $h:\mathbb{C}_\infty \to \mathbb{C}_\infty$ be a quasiconformal mapping with $h(\infty)=\infty$. Then $E$ is $CH$-removable if and only if $h(E)$ is.
\end{proposition}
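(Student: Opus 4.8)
The plan is to prove the two implications separately; since $h^{-1}$ is also quasiconformal, fixes $\infty$, and satisfies $h^{-1}(h(E))=E$, it suffices to show that if $E$ is $CH$-removable then so is $h(E)$. So assume $E$ is $CH$-removable and let $g:\mathbb{C}_\infty\to\mathbb{C}_\infty$ be a homeomorphism that is conformal on $\mathbb{C}_\infty\setminus h(E)=h(\Omega)$; the goal is to show that $g$ is a M\"obius transformation. Note first that $E$ must have zero area, since sets of positive area are never $CH$-removable by the remark following Proposition \ref{RemArea}; hence $h(E)$ also has zero area, because quasiconformal maps preserve null sets.

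The central idea is to manufacture, out of $g$, a homeomorphism of the sphere that is conformal off $E$ rather than off $h(E)$, so that the $CH$-removability of $E$ can be brought to bear. To this end set $\phi:=g\circ h$. Then $\phi$ is a homeomorphism of $\mathbb{C}_\infty$ which is quasiconformal on $\Omega$, with $\mu_\phi=\mu_h$ there, because postcomposing the quasiconformal map $h$ with the conformal map $g$ does not change the Beltrami coefficient. I would then use the measurable Riemann mapping theorem (Theorem \ref{MRMT}) to straighten $\phi$: let $w:\mathbb{C}_\infty\to\mathbb{C}_\infty$ be a quasiconformal map whose Beltrami coefficient equals $\mu_{\phi^{-1}}$ on the open set $\phi(\Omega)$, and (say) $0$ on $\phi(E)$. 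This prescription is legitimate, since $\phi^{-1}$ is quasiconformal on $\phi(\Omega)$ so that $\|\mu_{\phi^{-1}}\|_\infty<1$ there. By the uniqueness part of Theorem \ref{MRMT}, applied on the domain $\phi(\Omega)$ with $f=w$ and $g=\phi^{-1}$, the map $w\circ(\phi^{-1})^{-1}=w\circ\phi$ is conformal on $\phi^{-1}(\phi(\Omega))=\Omega$. Thus $w\circ\phi$ is a homeomorphism of the sphere that is conformal off $E$.

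By $CH$-removability of $E$, the map $w\circ\phi$ is therefore a M\"obius transformation $M$. Consequently $\phi=w^{-1}\circ M$ is quasiconformal, and hence so is $g=\phi\circ h^{-1}$. But $g$ is conformal on $h(\Omega)$, whose complement $h(E)$ has zero area, so $\mu_g=0$ almost everywhere; by Weyl's lemma $g$ is conformal on all of $\mathbb{C}_\infty$, that is, a M\"obius transformation, as desired.

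I expect the main obstacle to be precisely the construction in the second paragraph. The naive attempt of comparing $g\circ h$ directly with $h$ is tautological: their Beltrami coefficients agree on the full-measure set $\Omega$, so the comparison map is just $h$ itself and one lands back at $g$ conformal off $h(E)$, never off $E$. What makes the argument work is to straighten $\phi$ by a map $w$ obtained from the measurable Riemann mapping theorem and prescribed on $\phi(\Omega)$, which relocates the conformality from $h(\Omega)$ to $\Omega$. A point requiring care is that $g$ need not preserve sets of measure zero (wild homeomorphisms conformal off a null set exist, as in the Kaufman--Wu phenomenon discussed above), so $\phi(E)=g(h(E))$ may well have positive area; this is harmless, since $w$ is only constrained on the open set $\phi(\Omega)$, but it is exactly the reason one cannot simply assert that $\phi$ is globally quasiconformal from the outset and must instead produce the extension through removability of $E$.
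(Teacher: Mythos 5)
Your argument is correct and is essentially identical to the paper's proof: your map $w$ (with $\mu_w=\mu_{(g\circ h)^{-1}}$ on $g(h(\Omega))$) is exactly the paper's auxiliary map $f$ from Theorem \ref{MRMT}, and both proofs conclude by applying $CH$-removability of $E$ to $w\circ g\circ h$ and then Weyl's lemma to the now-quasiconformal $g$. The only difference is expository, namely your explicit discussion of why $\phi(E)$ may have positive area and why this is harmless.
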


\begin{proof}
Since the inverse of a quasiconformal mapping is also quasiconformal, it suffices to prove one of the two implications, say the first one. Assume that $E$ is $CH$-removable. Note that $E$ and $h(E)$ must have zero area, by the remark following Proposition \ref{RemArea} and the fact that quasiconformal mappings preserve sets of measure zero.

Let $g:\mathbb{C}_\infty \to \mathbb{C}_\infty$ be any homeomorphism conformal outside $h(E)$. By Theorem \ref{MRMT}, there exists a quasiconformal mapping $f:\mathbb{C}_\infty \to \mathbb{C}_\infty$ such that $\mu_{h^{-1} \circ g^{-1}} = \mu_f$ outside $g(h(E))$. By the uniqueness part, $f\circ g \circ h$ is a homeomorphism of $\mathbb{C}_\infty$ onto itself which is conformal outside $E$. Since $E$ is $CH$-removable, $f \circ g \circ h$ must be conformal everywhere, so that $g$ is quasiconformal on $\mathbb{C}_\infty$. But $g$ is conformal outside $h(E)$, a set of zero area, hence it must be conformal everywhere by Weyl's lemma. This shows that $h(E)$ is $CH$-removable.

\end{proof}

\begin{corollary}
Quasicircles (images of the unit circle under quasiconformal mappings of the sphere) are $CH$-removable.
\end{corollary}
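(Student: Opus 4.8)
The plan is to recognize this as an immediate consequence of the quasiconformal invariance of $CH$-removability established in the preceding Proposition, combined with the fact that the unit circle is itself $CH$-removable.

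First I would dispose of the base case. The unit circle $\partial\mathbb{D}$ is a rectifiable Jordan curve, so $\mathcal{H}^1(\partial\mathbb{D})=2\pi<\infty$. By Corollary \ref{CorFiniteLengthCH}, every compact set of finite one-dimensional Hausdorff measure is $CH$-removable, and hence $\partial\mathbb{D}$ is $CH$-removable.

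Next, let $Q$ be a quasicircle, so that $Q=\phi(\partial\mathbb{D})$ for some quasiconformal self-map $\phi$ of the sphere. To invoke the preceding Proposition I must normalize at infinity, since that result is stated for quasiconformal maps fixing $\infty$. I would choose a M\"obius transformation $M$ with $M(\phi(\infty))=\infty$ and set $h:=M\circ\phi$. Because M\"obius transformations are conformal, hence $1$-quasiconformal, the composition $h$ is quasiconformal and satisfies $h(\infty)=\infty$. Applying the Proposition with $E=\partial\mathbb{D}$ and this $h$, the $CH$-removability of $\partial\mathbb{D}$ yields that $h(\partial\mathbb{D})=M(Q)$ is $CH$-removable.

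It then remains to discard the auxiliary M\"obius transformation, that is, to pass from $M(Q)$ back to $Q$. For this I would use the elementary M\"obius invariance of $CH$-removability: if $g$ is any homeomorphism of the sphere conformal outside $Q$, then $M\circ g\circ M^{-1}$ is a homeomorphism of the sphere conformal outside $M(Q)$, hence a M\"obius transformation by the $CH$-removability of $M(Q)$, and therefore $g=M^{-1}\circ(M\circ g\circ M^{-1})\circ M$ is itself M\"obius. This shows $Q$ is $CH$-removable. There is no genuine obstacle here—the corollary is essentially immediate once the unit circle is known to be removable—and the only points meriting a little care are the normalization at infinity needed to apply the Proposition and the short conjugation argument giving M\"obius invariance.
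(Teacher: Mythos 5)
Your proof is correct and is exactly the argument the paper intends: the unit circle is $CH$-removable because it has finite length (Corollary \ref{CorFiniteLengthCH}), and the quasiconformal invariance proposition transfers this to any quasicircle. Your extra care in normalizing the quasiconformal map at infinity and then conjugating by the auxiliary M\"obius transformation is a valid way to handle the hypothesis $h(\infty)=\infty$, which the paper leaves implicit.
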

We also mention that \textit{David circles} are also $CH$-removable, see \cite{ZAK}. David circles are images of the unit circle under so-called \textit{David maps}, which are generalizations of quasiconformal mappings where the Beltrami coefficient is allowed to tend to one in a controlled way.

Now, recall that by Proposition \ref{propQC}, it suffices to assume in the definition of $S$-removability that the maps are quasiconformal outside the set. In that regard, it is natural to introduce the following definition.
\begin{definition}
We say that a compact set $E \subset \mathbb{C}$ is $QCH$-\textit{removable} if every homeomorphism of $\mathbb{C}_\infty$ onto itself that is quasiconformal outside $E$ is actually quasiconformal everywhere.
\end{definition}

As in Proposition \ref{PropExtendQC}, the property of being $QCH$-removable is local.

\begin{proposition}
\label{PropExtendQCH}
The following are equivalent :
\begin{enumerate}[\rm(i)]
\item For any open set $U$ with $E \subset U$, every homeomorphism $f:U \to f(U)$ that is quasiconformal on $U \setminus E$ is actually quasiconformal on the whole open set $U$;
\item $E$ is $QCH$-removable.
\end{enumerate}
\end{proposition}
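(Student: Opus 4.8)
The plan is to establish the two implications separately. The implication $(i) \Rightarrow (ii)$ is immediate: taking $U = \mathbb{C}_\infty$ in $(i)$, any homeomorphism of the sphere that is quasiconformal on $\mathbb{C}_\infty \setminus E$ is quasiconformal on all of $\mathbb{C}_\infty$, which is precisely $QCH$-removability. The whole content therefore lies in the converse $(ii) \Rightarrow (i)$, and here I would argue exactly as in Proposition \ref{PropExtendQC}. Since quasiconformality is a local property and $f$ is already quasiconformal on $U \setminus E$, it suffices to produce a single homeomorphism of the sphere that coincides with $f$ on a neighborhood of $E$ and is quasiconformal off $E$; $QCH$-removability will then upgrade it to a globally quasiconformal map, forcing $f$ itself to be quasiconformal across $E$.

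Concretely, I would first choose a smoothly bounded open set $V$ with $E \subset V \subset \overline{V} \subset U$ and $\overline{V}$ compact, so that the finitely many Jordan curves comprising $\partial V$ are analytic, hence quasicircles, and lie in $U \setminus E$. On a collar neighbourhood of $\partial V$ the map $f$ is quasiconformal, so $f(\partial V)$ is again a union of quasicircles bounding the region $f(V)$. The key step is to discard $f$ outside $\overline{V}$ and build a fresh quasiconformal homeomorphism $g$ of $\mathbb{C}_\infty \setminus V$ onto $\mathbb{C}_\infty \setminus f(V)$ whose boundary values agree with $f|_{\partial V}$; such a $g$ exists by the classical quasiconformal extension theorems \cite[Chapter II, Theorem 8.3]{LEH}, since $f|_{\partial V}$ is the boundary correspondence of a quasiconformal map between complementary quasidisks. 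Setting
\[
G(z) = \begin{cases} f(z), & z \in \overline{V}, \\ g(z), & z \in \mathbb{C}_\infty \setminus V, \end{cases}
\]
produces a homeomorphism of the sphere that is quasiconformal on each side of the quasicircle $\partial V$.

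Since $G$ is quasiconformal on each complementary component of the quasicircle $\partial V$ and continuous across it, and since its image boundary $f(\partial V)$ is again a quasicircle, a standard gluing principle for quasicircles (a homeomorphism that is quasiconformal on each side of a quasicircle and maps it to a quasicircle is quasiconformal across it) shows that $G$ is quasiconformal across $\partial V$; combined with the quasiconformality of $f$ on $V \setminus E$ and of $g$ on the exterior, this gives that $G$ is a homeomorphism of $\mathbb{C}_\infty$ that is quasiconformal on $\mathbb{C}_\infty \setminus E$. Now $QCH$-removability applies and shows that $G$ is quasiconformal on the whole sphere, so in particular $G = f$ is quasiconformal on $V \supset E$; together with the hypothesis that $f$ is quasiconformal on $U \setminus E$, this yields that $f$ is quasiconformal on all of $U$, as desired. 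I expect the main obstacle to be precisely the extension and gluing step: one must guarantee that $g$ can be chosen so that $G$ is a genuine homeomorphism of the sphere, with the inside of $\partial V$ mapping to the inside of $f(\partial V)$, and that the two pieces match to a quasiconformal map across $\partial V$. This is exactly where the regularity of $\partial V$ (that it is a quasicircle) and the classical extension and reflection results are used.
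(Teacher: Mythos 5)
Your proof is correct and takes exactly the route the paper intends: the paper states this proposition without proof, deferring to Proposition \ref{PropExtendQC}, whose own proof is simply ``the direct implication is trivial, while the converse follows from classical quasiconformal extension theorems'' \cite[Chapter II, Theorem 8.3]{LEH} --- i.e., precisely the extend-across-a-quasicircle-and-glue argument you carry out. Your write-up just supplies the details (choice of $V$, extension of $f|_{\partial V}$ to the exterior, removability of the quasicircle for the gluing) that the paper leaves implicit.
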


We do not know however if Proposition \ref{PropExtendQCH} holds without the assumption that $E$ is contained in $U$ in (i), see the discussion following Question \ref{Q1}.

We also have the following analogue of Proposition \ref{propQC}.

\begin{proposition}
\label{propQCH}
A compact set $E$ is $QCH$-removable if and only if it is $CH$-removable.
\end{proposition}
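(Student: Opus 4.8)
The plan is to follow the template of Proposition \ref{propQC}, replacing conformal and quasiconformal maps of $\Omega$ by homeomorphisms of the whole sphere that are conformal (resp.\ quasiconformal) on $\Omega$. The two implications behave quite differently: the direction ``$CH$-removable $\Rightarrow QCH$-removable'' is a clean application of the measurable Riemann mapping theorem, while the reverse direction requires one to first establish that $E$ has zero area, and that is where the real difficulty lies.

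For ``$CH$-removable $\Rightarrow QCH$-removable'', I would argue exactly as in the converse part of Proposition \ref{propQC}. Let $g$ be any homeomorphism of $\mathbb{C}_\infty$ that is quasiconformal on $\Omega$. Applying Theorem \ref{MRMT} on the open set $g(\Omega)$ to the Beltrami coefficient $\mu_{g^{-1}}$, extended measurably by zero across $g(E)$, I obtain a quasiconformal mapping $f:\mathbb{C}_\infty \to \mathbb{C}_\infty$ with $\mu_f = \mu_{g^{-1}}$ on $g(\Omega)$. The uniqueness part of Theorem \ref{MRMT}, applied with $F=f$ and $G=g^{-1}$, then shows that $f \circ g = F \circ G^{-1}$ is conformal on $\Omega$. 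Since $f$ and $g$ are homeomorphisms of the sphere, so is $f \circ g$, whence $f \circ g \in CH(\Omega)$; by $CH$-removability it is a M\"{o}bius transformation. Therefore $g = f^{-1} \circ (f \circ g)$ is quasiconformal on all of $\mathbb{C}_\infty$, being a composition of quasiconformal maps, and as $g$ was arbitrary, $E$ is $QCH$-removable.

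For the reverse implication ``$QCH$-removable $\Rightarrow CH$-removable'', I would start from $f \in CH(\Omega)$, i.e.\ a homeomorphism of $\mathbb{C}_\infty$ conformal on $\Omega$. Being conformal on $\Omega$, $f$ is in particular quasiconformal there, so $QCH$-removability forces $f$ to be quasiconformal on the whole sphere. To then conclude via Weyl's lemma that $f$ is $1$-quasiconformal, hence conformal everywhere and thus a M\"{o}bius transformation, I need $\mu_f = 0$ almost everywhere; since $\mu_f = 0$ on $\Omega$, this reduces exactly to knowing that $E$ has zero area.

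Establishing $m(E)=0$ is the crux, and I expect it to be the main obstacle. The Koebe-type argument used in Proposition \ref{propQC} is not available here, since Koebe's conformal map on $\Omega$ need not extend to a homeomorphism of the sphere. Instead I would argue by contraposition using the result of Kaufman and Wu quoted in the remark after Proposition \ref{RemArea}: if $m(E)>0$, there exists $g \in CH(\Omega)$ carrying a positive-area subset of $E$ onto a set of zero area. Such a $g$ is a homeomorphism of the sphere, conformal (hence quasiconformal) on $\Omega$, yet it does not preserve sets of measure zero and so cannot be quasiconformal everywhere, contradicting $QCH$-removability; therefore $m(E)=0$. I note that one should not expect a more elementary substitute here, since the Lipschitz perturbation $z \mapsto z + \epsilon f(z)$ from the alternative remark after Proposition \ref{RemArea} is itself quasiconformal for small $\epsilon$ and so does not violate $QCH$-removability, which is precisely why a genuinely non-quasiconformal map is required.
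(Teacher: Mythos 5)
Your proof is correct and follows essentially the same route as the paper: the ``$CH \Rightarrow QCH$'' direction mirrors the measurable-Riemann-mapping-theorem argument of Proposition \ref{propQC}, and the ``$QCH \Rightarrow CH$'' direction reduces to showing $m(E)=0$, which you obtain from the Kaufman--Wu result exactly as the paper does (it cites the third remark after Proposition \ref{RemArea} for this). Your observations that the Koebe argument is unavailable here and that the Lipschitz perturbation is itself quasiconformal correctly identify why this step is the genuine difficulty, in agreement with the paper's remark.
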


\begin{proof}
The proof is exactly the same as in Proposition \ref{propQC}, except for the fact that $QCH$-removable sets have zero area, which follows from the third remark following Proposition \ref{RemArea}.
\end{proof}

\begin{remark}
Some authors claimed without proof that the direct implication (i.e. $QCH$-removability implies $CH$-removability) follows trivially from the definition. However, the difficult part is proving that $QCH$-removable sets have zero area. We do not know any elementary proof of this fact. Note that instead of resorting to Kaufman and Wu's theorem, one can also use known results on David maps.
\end{remark}

An important consequence of Proposition \ref{propQCH} is that the property of being $CH$-removable is invariant under disjoint unions.

\begin{corollary}
\label{QCHunion}
If $E$ and $F$ are disjoint $CH$-removable compact sets, then $E \cup F$ is $CH$-removable.
\end{corollary}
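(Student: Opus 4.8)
The plan is to reduce to $QCH$-removability and then exploit disjointness to localize. First I would observe that since $E$ and $F$ are $CH$-removable, both have zero area by the remark following Proposition \ref{RemArea}, and hence so does $E \cup F$. By Proposition \ref{propQCH} it therefore suffices to prove that $E \cup F$ is $QCH$-removable. So let $g:\mathbb{C}_\infty \to \mathbb{C}_\infty$ be any homeomorphism that is quasiconformal outside $E \cup F$; the goal is to show that $g$ is quasiconformal everywhere.

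The key geometric input is that $E$ and $F$ are disjoint compact sets and can thus be separated by disjoint open sets: choose open $U \supset E$ and $V \supset F$ with $U \cap V = \emptyset$. Because $U$ misses $F$, the restriction of $g$ to $U \setminus E$ is quasiconformal, as $U \setminus E \subset \mathbb{C}_\infty \setminus (E \cup F)$. Now $E$ is $QCH$-removable by Proposition \ref{propQCH}, and $E \subset U$, so Proposition \ref{PropExtendQCH} applies and yields that $g$ is quasiconformal on all of $U$, in particular across $E$. Symmetrically, $g$ is quasiconformal on $V$, hence across $F$.

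The final step is a gluing argument. The three open sets $\mathbb{C}_\infty \setminus (E \cup F)$, $U$ and $V$ cover the sphere, and on each of them $g$ is $K_i$-quasiconformal for some $K_i \geq 1$. Since quasiconformality is a local property --- membership in $W^{1,2}_{loc}$ is local and the Beltrami coefficient is defined pointwise almost everywhere --- the global Beltrami coefficient of the homeomorphism $g$ satisfies $\|\mu_g\|_{\infty} \leq \max_i \frac{K_i - 1}{K_i + 1} < 1$. Thus $g$ is quasiconformal on the whole sphere, showing that $E \cup F$ is $QCH$-removable and therefore $CH$-removable by Proposition \ref{propQCH}.

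I expect the only delicate point to be this gluing step: one must check that local quasiconformality on a finite cover upgrades to global quasiconformality with a uniform dilatation bound. This is routine precisely because the cover is finite, so the essential supremum of the dilatation is controlled by the maximum of finitely many bounds; the homeomorphism hypothesis is global and supplied from the outset, so no separate matching of the local pieces is required.
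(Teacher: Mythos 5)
Your proof is correct, and it rests on the same two pillars as the paper's: the equivalence of $CH$- and $QCH$-removability (Proposition \ref{propQCH}) and the locality of $QCH$-removability (Proposition \ref{PropExtendQCH}). The difference is organizational. You separate $E$ and $F$ by disjoint open neighborhoods $U$ and $V$, apply Proposition \ref{PropExtendQCH} once for each set, and then glue the three local statements into a global one; this forces you to invoke the (standard, but extra) fact that a homeomorphism that is locally quasiconformal with respect to a finite open cover is globally quasiconformal. The paper instead argues sequentially: it takes $U=\mathbb{C}_\infty\setminus E$ itself as the open neighborhood of $F$ --- this is precisely where disjointness enters --- so that a single application of Proposition \ref{PropExtendQCH} upgrades $g$ to a quasiconformal map on all of $\mathbb{C}_\infty\setminus E$, after which the global definition of $QCH$-removability of $E$ finishes the proof with no gluing at all. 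The step you flag as the ``only delicate point'' is therefore avoidable, though your local-to-global formulation is the one that would generalize to finitely many pairwise disjoint removable sets. Note also that your preliminary observation that $E\cup F$ has zero area is not needed: Proposition \ref{propQCH} already converts $QCH$-removability of $E\cup F$ directly into $CH$-removability (the zero-area issue is hidden inside the proof of that proposition, not in its application).
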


\begin{proof}
By Proposition \ref{propQCH}, it suffices to prove the result for $QCH$-removability. If $f:\mathbb{C}_\infty \to \mathbb{C}_\infty$ is a homeomorphism of the sphere onto itself that is quasiconformal on $\mathbb{C}_\infty \setminus (E \cup F) = (\mathbb{C}_\infty \setminus E) \setminus F$, then by $QCH$-removability of $F$ and Proposition \ref{PropExtendQCH}, the map $f$ is in fact quasiconformal on $\mathbb{C}_\infty \setminus E$. Since $E$ is also $QCH$-removable, we get that $f$ is actually quasiconformal everywhere.
\end{proof}

We conclude this subsection by a brief presentation of the removability theorems of Jones and Smirnov \cite{JOS}, which give elegant and geometric sufficient conditions for $CH$-removability.

First, we need some definitions. For the rest of this subsection, we suppose that $K$ is the boundary of a domain $\Omega$. We shall also assume for simplicity that $\Omega$ is simply connected, although the following also works for arbitrary domains, with suitable modifications.

\begin{definition}
A \textit{Whitney decomposition} of $\Omega$ consists of a countable collection of dyadic squares $\{Q_j\}$ contained in $\Omega$ such that

\begin{enumerate}[\rm(i)]
\item the interiors of the squares are pairwise disjoint;
\item the union of their closure is the whole domain $\Omega$;
\item for each $Q_j$, we have $\operatorname{diam}(Q_j) \simeq \operatorname{dist}(Q_j,\partial \Omega)$.
\end{enumerate}

\end{definition}

The existence of such a decomposition is well-known and is usually referred to as the \textit{Whitney covering lemma}.

Fix some point $z_0 \in \Omega$ and let $\Gamma:=\{\gamma_z : z \in K\}$ be the family of all hyperbolic geodesics $\gamma_z$ connecting $z_0$ to some point $z \in K$.

\begin{definition}
For each Whitney square $Q_j \subset \Omega$, the \textit{shadow} of $Q_j$ is defined by
$$S(Q_j):=\{z \in K : \gamma_z \cap Q_j \neq \emptyset\}.$$
\end{definition}

We can now state the main result of \cite{JOS}.

\begin{theorem}[Jones--Smirnov \cite{JOS}]
\label{jos}
Suppose that
$$\sum_{j} \operatorname{diam}(S(Q_j))^2 < \infty,$$
where the sum is over all Whitney squares $Q_j$ in $\Omega$. Then $K$ is $CH$-removable.
\end{theorem}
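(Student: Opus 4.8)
The plan is to prove $CH$-removability by verifying the local extension criterion for $QCH$-removability (Proposition \ref{propQCH} reduces $CH$-removability to $QCH$-removability). Let $f:\mathbb{C}_\infty \to \mathbb{C}_\infty$ be a homeomorphism that is conformal on $\Omega = \mathbb{C}_\infty \setminus K$; after normalizing by M\"obius transformations we may assume $f$ fixes $\infty$ and is bounded near $K$. The goal is to show that $f$ lies in the Sobolev space $W^{1,2}_{loc}$ across $K$, since a homeomorphism that is conformal off $K$ and belongs to $W^{1,2}_{loc}$ globally is automatically $1$-quasiconformal, hence conformal everywhere by Weyl's lemma, and therefore M\"obius. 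The whole difficulty is thus concentrated in establishing the global $W^{1,2}$ membership, i.e. in controlling the distributional derivative of $f$ across the set $K$ of possible singularities.

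The key geometric idea is to exploit the Whitney decomposition $\{Q_j\}$ of $\Omega$ together with the hypothesis $\sum_j \operatorname{diam}(S(Q_j))^2 < \infty$. On each Whitney square $Q_j$, the conformal map $f$ distorts distances by approximately a constant factor, so by the Koebe distortion theorem $\operatorname{diam}(f(Q_j)) \simeq |f'(z_j)|\operatorname{diam}(Q_j)$ for any $z_j \in Q_j$, and the local Dirichlet-type energy of $f$ on $Q_j$ is comparable to $\operatorname{diam}(f(Q_j))^2$. The plan is to show that the total energy $\sum_j \operatorname{diam}(f(Q_j))^2$ is finite by comparing, for each $j$, the image size $\operatorname{diam}(f(Q_j))$ to the shadow size $\operatorname{diam}(S(Q_j))$. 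The link between them comes from the fact that $f$, being a homeomorphism of the sphere, sends the shadow $S(Q_j) \subset K$ and the geodesics passing through $Q_j$ to a configuration in the image plane whose geometry is controlled by $\operatorname{diam}(f(Q_j))$; running the estimate in the correct direction one obtains a summable bound in terms of $\sum_j \operatorname{diam}(S(Q_j))^2$.

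The main obstacle will be to make rigorous the passage from the summability of the shadows to the finiteness of the total image energy, and in particular to rule out that $f$ concentrates derivative mass on $K$ itself (a set which, being the boundary, could a priori carry a singular part of $\overline{\partial}f$). This is exactly the step where one needs a quantitative, length-area argument: one should estimate $\operatorname{diam}(f(Q_j))$ from above using the image under $f$ of the hyperbolic geodesics landing in $S(Q_j)$, and then sum, using Cauchy--Schwarz together with the fact that the shadows have bounded overlap (each point of $K$ lies in boundedly many shadows because of the Whitney condition $\operatorname{diam}(Q_j) \simeq \operatorname{dist}(Q_j,\partial\Omega)$). Once the global finiteness of the Dirichlet integral is in hand, a standard removability-of-singularities argument for $W^{1,2}$ functions shows that the distributional $\overline{\partial}f$ has no mass on $K$ (a set of zero area, since $CH$-removable candidates have zero area by the remark following Proposition \ref{RemArea}), so $f \in W^{1,2}_{loc}(\mathbb{C}_\infty)$ and the conclusion follows as above.
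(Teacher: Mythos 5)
The paper itself does not prove Theorem \ref{jos} (it only cites \cite{JOS}), so your proposal has to stand on its own, and as written its central step fails. The quantity you propose to control, $\sum_j \operatorname{diam}(f(Q_j))^2$, is \emph{automatically} finite and carries no information: by the Koebe distortion theorem $\operatorname{diam}(f(Q_j))^2 \simeq m(f(Q_j))$, and since the $f(Q_j)$ have pairwise disjoint interiors the sum is bounded by $m(f(\Omega))$, which is finite once $f$ is normalized to be bounded near $K$. So the shadow hypothesis plays no role in bounding the Dirichlet energy of $f$ on $\Omega$, and that energy bound cannot be the engine of the proof. Worse, the concluding step --- ``finite Dirichlet integral off $K$, continuity, and $m(K)=0$ imply $f\in W^{1,2}_{loc}$ globally by a standard removability-of-singularities argument'' --- is false as a general principle: if it were true, every Jordan curve of zero area would be $CH$-removable, contradicting Theorem \ref{thmBishop}. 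The entire difficulty of the Jones--Smirnov theorem is precisely to exclude a singular part of the distributional derivative supported on $K$, and there is no standard argument that does this from an energy bound alone. In the actual proof the shadow condition enters through an integration-by-parts estimate against a test function $\varphi$: the boundary error terms are controlled by sums of the form $\sum_j \operatorname{diam}(f(Q_j))\,\operatorname{diam}(S(Q_j))$, which Cauchy--Schwarz splits into the automatic factor $\bigl(\sum_j \operatorname{diam}(f(Q_j))^2\bigr)^{1/2}\lesssim m(f(\Omega))^{1/2}$ and the hypothesis $\bigl(\sum_j \operatorname{diam}(S(Q_j))^2\bigr)^{1/2}<\infty$. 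Your sketch gestures at ``running the estimate in the correct direction'' but never formulates this pairing, which is the one place the hypothesis is actually used.

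A secondary but genuine logical error: you justify $m(K)=0$ by appealing to the remark after Proposition \ref{RemArea}, which says that sets of positive area are never $CH$-removable. That is the contrapositive of what you are trying to prove and cannot be used as an ingredient. The zero-area statement must instead be derived from the hypothesis itself, which is possible: every point of $K$ lies in the shadow of some Whitney square of arbitrarily deep generation, so the shadows of generation $\geq n$ cover $K$ and $\mathcal{H}^2_\infty(K)\le \sum_{\mathrm{gen}(Q_j)\ge n}\operatorname{diam}(S(Q_j))^2\to 0$ as $n\to\infty$, whence $m(K)=0$. With that repaired, the reduction to $QCH$-removability via Proposition \ref{propQCH} and the final appeal to Weyl's lemma are fine; but the heart of the argument --- showing $\overline{\partial}f$ has no singular mass on $K$ --- is missing, not merely unrigorous.
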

See also \cite{KOS} for a generalization of the above result.

An important consequence of Theorem \ref{jos} is that boundaries of sufficiently nice simply connected domains are $CH$-removable.

\begin{definition}
We say that a simply connected domain $\Omega$ is a \textit{H\"{o}lder domain} if the Riemann conformal map is H\"{o}lder continuous on the closed unit disk.
\end{definition}

\begin{corollary}[Jones--Smirnov \cite{JOS}]
\label{josHolder}
Boundaries of H\"{o}lder domains are $CH$-removable.
\end{corollary}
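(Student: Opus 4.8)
The goal is to prove Corollary \ref{josHolder}: the boundary of a Hölder domain is $CH$-removable. The plan is to deduce this from Theorem \ref{jos} by verifying the Whitney-shadow summability condition $\sum_j \operatorname{diam}(S(Q_j))^2 < \infty$ for a Hölder domain $\Omega$. The natural strategy is to transport everything to the unit disk via the Riemann map $\varphi:\mathbb{D}\to\Omega$, which by hypothesis extends Hölder continuously to $\overline{\mathbb{D}}$, and then exploit the Koebe distortion theorem to relate the geometry of Whitney squares and their shadows on the $\Omega$ side to controllable quantities on the $\mathbb{D}$ side.

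**The key steps, in order.** First I would fix a Whitney decomposition $\{Q_j\}$ of $\Omega$ and set up a comparison with a standard dyadic-type decomposition of the disk: to each Whitney square $Q_j$, with center at hyperbolic distance roughly $n_j$ from $\varphi(0)=z_0$, associate the preimage region under $\varphi$, which is comparable to a Carleson-type box $T_{j}$ in $\mathbb{D}$ of the form $\{re^{i\theta}: 1-2^{-n_j}\le r\le 1-2^{-n_j-1},\ |\theta-\theta_j|\le 2^{-n_j}\}$. By the Koebe distortion theorem, $\operatorname{diam}(Q_j)\simeq (1-|w_j|)\,|\varphi'(w_j)|$ for $w_j$ a point in $T_j$. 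Second, I would show that the shadow $S(Q_j)$ corresponds under $\varphi$ to a boundary arc $I_j\subset\partial\mathbb{D}$ of length $\simeq 2^{-n_j}\simeq 1-|w_j|$, so that by the Hölder continuity of $\varphi$ with exponent $\alpha$ one gets the crucial bound
$$\operatorname{diam}(S(Q_j)) \lesssim |I_j|^{\alpha} \simeq (1-|w_j|)^{\alpha}.$$
Third, I would sum: grouping the Whitney squares by their generation $n$, there are about $2^{n}$ squares at generation $n$, each contributing $(2^{-n})^{2\alpha}$, so
$$\sum_j \operatorname{diam}(S(Q_j))^2 \lesssim \sum_{n} 2^{n}\, 2^{-2\alpha n} = \sum_n 2^{(1-2\alpha)n}.$$

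**The main obstacle.** The naive estimate above converges only when $2\alpha>1$, i.e. $\alpha>1/2$, whereas Hölder domains allow any $\alpha\in(0,1]$, so this crude counting is insufficient and is precisely where the real work lies. The honest argument must replace the uniform Hölder bound by an $\ell^2$-type integrated estimate: rather than bounding each shadow separately, I would use the identity $\sum_j \operatorname{diam}(S(Q_j))^2 \lesssim \sum_j \operatorname{osc}_{I_j}(\varphi)^2$ and relate the sum of squared boundary oscillations to the Dirichlet-type energy $\int_{\mathbb{D}}|\varphi'|^2\,dm$, or more precisely to the fact that a Hölder domain has finite area and that $\sum_j (1-|w_j|)^2|\varphi'(w_j)|^2 \simeq \operatorname{area}(\Omega)<\infty$ by the Koebe estimate and disjointness of the $Q_j$. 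The subtle point, and the step I expect to demand the most care, is passing from the \emph{interior} diameters $\operatorname{diam}(Q_j)\simeq(1-|w_j|)|\varphi'(w_j)|$ to the \emph{boundary} shadow diameters $\operatorname{diam}(S(Q_j))$; this is exactly where Hölder continuity up to the boundary is used, to guarantee that the boundary oscillation over $I_j$ is controlled by the interior derivative data, allowing the area-finiteness to close the argument. I would therefore follow the approach of Jones and Smirnov, for which I refer to \cite{JOS}, and omit the detailed estimates.
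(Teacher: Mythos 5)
The paper does not actually prove this corollary: it states it immediately after Theorem \ref{jos} with the citation \cite{JOS} and no argument, so there is no in-paper proof to compare yours against line by line. Your outline is a reasonable reconstruction of how the deduction from Theorem \ref{jos} must go, and its most valuable feature is that it correctly identifies that the deduction is \emph{not} immediate: the naive bound $\operatorname{diam}(S(Q_j))\lesssim |I_j|^{\alpha}$ combined with the count of $\simeq 2^n$ squares per generation only gives convergence for $\alpha>1/2$. Many readers would miss this.

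However, the resolution you propose is not quite right as stated, and this is a genuine gap. The quantity $\sum_j (1-|w_j|)^2|\varphi'(w_j)|^2\simeq \operatorname{area}(\Omega)$ controls the diameters of the images of the \emph{single} top Whitney boxes, but the shadow diameter $\operatorname{diam}(S(Q_j))=\operatorname{diam}(\varphi(I_j))$ is the oscillation of the boundary values over the whole arc $I_j$, which is governed by the entire tent over $I_j$, not by the derivative at the one point $w_j$; H\"{o}lder continuity gives an \emph{upper} bound $|\varphi'(z)|\lesssim(1-|z|)^{\alpha-1}$ but no lower bound, so there is no inequality of the form $\operatorname{diam}(\varphi(I_j))\lesssim (1-|w_j|)|\varphi'(w_j)|$. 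To close the argument one must write $\operatorname{diam}(\varphi(I_j))\lesssim\sum_{k\ge n_j}\beta_k$, where $\beta_k$ is the maximal diameter of $\varphi(T)$ over disk-Whitney boxes $T$ of depth $k$ under $I_j$, and then interpolate between the pointwise H\"{o}lder bound $\beta_k\lesssim 2^{-k\alpha}$ and the square-summability $\sum_T\operatorname{diam}(\varphi(T))^2\lesssim\operatorname{area}(\Omega)$ via a weighted Cauchy--Schwarz across generations; this bookkeeping is the actual content of Jones and Smirnov's proof and is precisely the step you defer to \cite{JOS}. So your proposal is an honest and well-aimed sketch, on par with the paper's own treatment (which proves nothing), but it is not a proof, and the specific mechanism you name would not suffice if carried out literally.
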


This supersedes an earlier result of Jones \cite{JON} saying that boundaries of \textit{John domains} are $CH$-removable.

\subsection{Nonremovable sets of zero area}

Recall that by the first remark following Proposition \ref{RemArea}, compact sets of positive area are not $CH$-removable. The converse is well-known to be false. In this subsection, we present some examples of nonremovable sets of zero area.

As far as we know, the first such examples were given by Carleson \cite{CAR} and Gehring \cite{GEH}, who proved the following characterization of $CH$-removable product sets.

\begin{theorem}[Carleson \cite{CAR}, Gehring \cite{GEH}]
\label{thmCarGeh}
If $F \subset \mathbb{R}$ is compact, then $E:=F \times [0,1]$ is $CH$-removable if and only if $F$ is countable.
\end{theorem}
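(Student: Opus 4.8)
The plan is to prove the two implications separately, treating the countable case via the general removability machinery already established and the uncountable case by an explicit construction.

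For the easy direction, suppose $F \subset \mathbb{R}$ is countable. Then $E = F \times [0,1]$ is a countable union of vertical segments $\{x\} \times [0,1]$, each of which has finite (indeed, dimension-one finite) one-dimensional Hausdorff measure. By Corollary \ref{CorFiniteLengthCH}, any compact set that is a countable union of compact sets of finite length is $CH$-removable. Since $E$ is compact (as $F$ is compact) and is such a countable union, $E$ is $CH$-removable. This direction requires essentially no new work beyond invoking the earlier corollary.

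For the hard direction, I must show that if $F$ is uncountable, then $E = F \times [0,1]$ is \emph{not} $CH$-removable, even though $E$ has zero area. The key point is that an uncountable compact subset of $\mathbb{R}$ contains a perfect subset, so without loss of generality I may assume $F$ is a Cantor-type set; the goal is then to construct a homeomorphism $f$ of $\mathbb{C}_\infty$ onto itself, conformal outside $E$, that is not a M\"{o}bius transformation. The natural strategy is a welding-type construction: one builds $f$ so that it collapses or spreads the vertical fibers in a way that is impossible for a conformal map, using the product structure to reduce the two-dimensional problem to a one-dimensional statement about the uncountable set $F$. Concretely, one exploits the fact that the complement of $E$ consists of horizontal strips separated by the vertical segments, and one can define a mapping that shifts these strips by amounts governed by a nonconstant function on $F$; because $F$ is uncountable one has enough freedom to make this genuinely non-conformal while keeping it a homeomorphism conformal off $E$.

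The main obstacle will be verifying that the constructed map is a well-defined homeomorphism of the whole sphere and is conformal on the complement of $E$, rather than merely piecewise conformal with a visible discontinuity along the fibers. This is precisely where the distinction between countable and uncountable $F$ enters: for countable $F$ the Baire category/removability argument forces conformality to propagate across the fibers, whereas for uncountable $F$ one can arrange the strip-shifts to accumulate in a controlled way so that the map remains continuous and injective across $E$ while failing to be conformal there. I expect the technical heart to be an argument—most likely following Carleson's and Gehring's original approach—showing that the length parametrization along the uncountable fiber-base $F$ admits a nonaffine reparametrization that is realized by a genuine self-homeomorphism of the sphere; I would assume the detailed quasiconformal or explicit construction from \cite{CAR} and \cite{GEH} for this step rather than reconstruct it, since it is the substantive content of those papers.
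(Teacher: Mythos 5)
Your forward direction (countable implies removable) is correct and complete, and it takes a slightly different route from the paper: you write $E$ as a countable union of unit segments and invoke Corollary \ref{CorFiniteLengthCH} (ultimately Besicovitch's theorem), whereas the paper deduces this implication from Carleson's characterization of $A$-removable product sets (Theorem \ref{thmCarleson}) together with Proposition \ref{RemCH}. Both are one-line applications of earlier results, and yours is arguably the more economical.

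The hard direction as you outline it has a genuine gap. A map that shifts the complementary regions of $E$ horizontally by $y$-dependent amounts is a shear, and a nontrivial shear is not conformal; so the construction you describe cannot directly produce a non-M\"{o}bius element of $CH(\Omega)$. What Carleson and Gehring actually build is $x+iy \mapsto x+iy+g(y)\mu((-\infty,x))$, with $g$ smooth and supported on $[0,1]$ and $\mu$ a probability measure on $F$; this map is the identity off the band $0 \leq y \leq 1$ and a smooth shear on each complementary vertical strip (note: the regions between the fibers are \emph{vertical} strips, not horizontal ones), hence only \emph{quasiconformal} off $E$, and it fails to be quasiconformal across $E$. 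Converting this into non-$CH$-removability requires the equivalence of $QCH$-removability and $CH$-removability (Proposition \ref{propQCH}), a reduction your sketch never articulates. Moreover, the precise point where uncountability enters is the one-dimensional fact that an uncountable compact $F \subset \mathbb{R}$ supports a nonatomic probability measure $\mu$, so that $x \mapsto \mu((-\infty,x))$ is continuous, nonconstant, and locally constant off $F$; this is what makes $h$ a homeomorphism that is rigid off $E$ yet genuinely distorts along $E$, and it is exactly what fails for countable $F$ (every measure is then atomic and the distribution function jumps). Deferring the remaining estimates to \cite{CAR} and \cite{GEH} is reasonable in a survey, but as written your outline omits the two ideas --- the continuous measure on $F$ and the passage through $QCH$-removability --- that constitute the actual content of the proof.
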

By taking $F$ to be any uncountable compact set of zero one-dimensional Lebesgue measure, we obtain a nonremovable set $E$ of zero area.

The proof of the direct implication in Theorem \ref{thmCarGeh} involves the construction of a homeomorphism of the sphere onto itself which is quasiconformal outside $E$ of the form
$$x+iy \mapsto x+iy + g(y) \mu((-\infty,x)),$$
where $\mu$ is any continuous probability measure on $F$ and $g$ is some smooth function supported on $[0,1]$. If $F$ is uncountable, it is possible to choose $g$ such that $h$ is not quasiconformal everywhere.
The other implication is a direct consequence of Theorem \ref{thmCarleson} and Proposition \ref{RemCH}.

In fact, if $F$ is uncountable, then $F \times [0,1]$ contains a closed graph which is not $CH$-removable. This much stronger statement was proved by Kaufman \cite{KAU2}. See also \cite{WU} for other examples of nonremovable product sets.

As for examples of totally disconnected nonremovable sets of zero area, an example was given by Rothberger \cite{ROT} using only elementary normal family arguments. More precisely, the proof involves a simple and elegant geometric construction using a sequence of multiply connected slit domains converging to a Cantor set of zero area. A non-M\"{o}bius homeomorphism of the sphere conformal outside the set is then obtained as a suitable limit of slit mappings.

In the remaining of this subsection, we present a construction of nonremovable Jordan curves of zero area due to Bishop \cite{BIS2}. First, we need a definition.

\begin{definition}
A \textit{Hausdorff measure function} is an increasing continuous function $h: [0,\infty) \to [0,\infty)$ with $h(0)=0$. For such a function $h$, we denote by $\Lambda_h(E)$ the Hausdorff $h$-measure of a compact set $E$, so that the usual $s$-dimensional Hausdorff measure corresponds to $h(t)=t^s$.
\end{definition}

\begin{theorem}[Bishop \cite{BIS2}]
\label{thmBishop}
For any Hausdorff measure function $h$ with $h(t)=o(t)$ as $t \to 0$, there exists a Jordan curve $\Gamma$ such that
\begin{enumerate}[\rm(i)]
\item $\Gamma$ is not $CH$-removable;
\item $\Lambda_h(\Gamma)=0$;
\item there exists a non-M\"{o}bius $\phi \in CH(\Omega)$ with $\Lambda_h(\phi(\Gamma))=0$, where $\Omega:=\mathbb{C}_\infty \setminus \Gamma$.
\end{enumerate}
\end{theorem}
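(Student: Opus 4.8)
The plan is to build $\Gamma$ together with its image through conformal welding, using the standard dictionary between $CH$-removability of a Jordan curve and uniqueness of its welding. If $\Gamma$ is a Jordan curve with complementary Jordan domains $\Omega_1,\Omega_2$ and $f_1:\mathbb{D}\to\Omega_1$, $f_2:\mathbb{C}_\infty\setminus\overline{\mathbb{D}}\to\Omega_2$ are Riemann maps, the associated \emph{welding} is the circle homeomorphism $g:=f_2^{-1}\circ f_1$ on $\partial\mathbb{D}$. The key observation is that a non-M\"obius $\phi\in CH(\Omega)$ produces a second Jordan curve $\Gamma':=\phi(\Gamma)$ whose Riemann maps are $\phi\circ f_1,\phi\circ f_2$ and which therefore has the \emph{same} welding $g$; conversely, two solutions $\Gamma,\Gamma'$ of one welding that are not M\"obius images of each other are glued by $\phi:=(f_1'\circ f_1^{-1})\cup(f_2'\circ f_2^{-1})$ into exactly such a non-M\"obius map, where $f_1',f_2'$ are the Riemann maps for $\Gamma'$ (the two definitions agree on $\Gamma$ precisely because $\Gamma'$ realizes $g$). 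It therefore suffices to produce a single circle homeomorphism $g$ admitting two geometrically distinct weldings $\Gamma$ and $\Gamma'$, both satisfying $\Lambda_h=0$: then (i) is non-uniqueness, the gluing map is the $\phi$ required in (iii) with $\phi(\Gamma)=\Gamma'$ also $h$-null, and (ii) is built in.

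Before describing the construction it is worth isolating the constraint it must satisfy. By Corollary \ref{CorFiniteLengthCH}, any curve of finite length is $CH$-removable, so a nonremovable $\Gamma$ is forced to have $\mathcal{H}^1(\Gamma)=\infty$; at the same time we demand $\Lambda_h(\Gamma)=0$. These are compatible only because $h(t)=o(t)$: this is exactly the gauge regime in which a curve can simultaneously carry infinite length and be $h$-null. The construction must therefore make $\Gamma$ oscillate at every scale --- enough to accumulate infinite length and to destroy uniqueness of the welding --- while keeping the pieces small enough that $\sum h(\operatorname{diam})$ vanishes.

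Concretely, I would run an iterative construction over a Cantor set $K\subset\partial\mathbb{D}$ whose gaps decay so rapidly that $\Lambda_h(K)=0$ (possible since $h(t)=o(t)$). At each stage one prescribes, on the arcs abutting $K$, a small amount of welding freedom: a parameter that may be set in two different ways without altering $g$ away from $K$, where the welding is made singular, while the welding is kept rigid (say piecewise real-analytic) on the complementary arcs so that the limiting objects are genuine Jordan curves rather than degenerate slits. Solving the welding at each finite stage and passing to the limit yields two curves $\Gamma,\Gamma'$ that coincide as weldings of $g$ but differ in the plane. The diameter bounds imposed scale by scale transfer, through the Koebe distortion theorem applied to $f_1,f_2$ and the rapid decay of the gaps, to the estimates $\Lambda_h(\Gamma)=\Lambda_h(\Gamma')=0$ demanded in (ii) and (iii).

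The hard part is reconciling the two competing requirements identified above: the curve must be wild enough (infinite length, non-quasisymmetric welding) to be nonremovable, yet thin enough in the $h$-gauge to be $h$-null. The mechanism that makes this possible is to confine all the welding freedom --- and hence all the nonremovability --- to the $h$-negligible Cantor set $K$, while allowing length to accumulate harmlessly on the rigid complementary arcs. The second delicate point is showing that the limiting weldings are honest homeomorphisms of the sphere, i.e. that the conformal welding converges and no arc is collapsed to a point; I would control this by maintaining uniform quasisymmetry estimates on the arcs at every finite stage and extracting a limit by a normal-family argument for the Riemann maps, in the same spirit as the slit-mapping limits used in Rothberger's construction described above. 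Verifying that the two limiting curves are genuinely not M\"obius images of one another --- so that the glued $\phi$ is really non-M\"obius --- is the final point requiring care.
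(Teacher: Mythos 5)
Your reduction of the theorem to non-uniqueness of a conformal welding is correct and is the same dictionary as Proposition \ref{Uniqueness}: two Jordan curves realizing the same welding homeomorphism and not M\"obius images of one another glue to a non-M\"obius element of $CH(\Omega)$. You have also correctly identified the quantitative tension: a nonremovable curve cannot be a countable union of compact sets of finite length (Corollary \ref{CorFiniteLengthCH}), while $h(t)=o(t)$ is exactly the regime in which such a curve can still be $h$-null. The gap is in the core construction. You propose to first manufacture a circle homeomorphism $g$ carrying ``welding freedom'' on an $h$-null Cantor set $K$ --- ``a parameter that may be set in two different ways without altering $g$ away from $K$'' --- and then to solve the welding twice. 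But no such local parameter exists: the Riemann map of a Jordan domain is globally rigid, so any perturbation of the curve near $K$ changes the boundary correspondence everywhere on the circle, not just near $K$. You give no lemma, estimate, or mechanism forcing two genuinely distinct curves to induce the \emph{identical} homeomorphism $g$, and this is precisely where the entire difficulty of the theorem is concentrated; the secondary points you flag (no arc collapses in the limit, the two limits are not M\"obius-equivalent) are downstream of it.

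The paper's proof, following Bishop, avoids ever solving a welding. The technical input is Lemma \ref{lemBishop}: given an analytic Jordan curve and a pair of conformal maps of its two sides onto the two sides of a target Jordan curve, one can replace both the curve and the maps by nearby data so that the \emph{jump} of the two maps along the new curve, measured by arclength in the image, is as small as desired, while the maps move by less than $\delta$ away from the curve. Iterating with jump $<2^{-n}$ at stage $n$ yields in the limit a curve $\Gamma$ and a pair of conformal maps $\psi_1,\psi_2$ that agree on $\Gamma$ and hence glue to the required non-M\"obius homeomorphism; the curve, its image, and the welding are built simultaneously, and the non-uniqueness of the welding is a by-product rather than an input. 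The gauge hypothesis $h(t)=o(t)$ enters only at the end, in choosing $\epsilon_n$ so small that $\Gamma^{n-1}$ and $\psi_1^{n-1}(\Gamma^{n-1})$ are covered by $N$ disks of radius $\epsilon_n$ with $Nh(\epsilon_n)<2^{-n}$, which is possible because the intermediate curves are analytic. To repair your argument you would need to supply an analogue of Lemma \ref{lemBishop}, at which point you would be reproducing the paper's proof rather than giving an alternative one.
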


We also mention that the curve $\Gamma$ can be constructed so that it is ``highly nonremovable'', in the sense that given any other curve $\Gamma'$ and any $\epsilon>0$, there is a $\phi \in CH(\Omega)$ such that $\phi(\Gamma)$ belongs to the $\epsilon$-neighborhood of $\Gamma'$ with respect to the Hausdorff metric. Furthermore, Bishop's argument can be used to obtain an analogue of Theorem \ref{thmBishop} for totally disconnected sets.

\begin{remark}
Theorem \ref{thmBishop} implies that one can construct nonremovable curves of any Hausdorff dimension greater or equal to one. On the other hand, there are examples of compact sets of Hausdorff dimension two which are $S$-removable, hence also $CH$-removable (see \cite[Chapter V, Section 3.7]{LEH}). This shows that Corollary \ref{CorFiniteLengthCH} and Proposition \ref{RemArea} are best possible in terms of Hausdorff measures alone.
\end{remark}

We now give a sketch of the proof of Theorem \ref{thmBishop}. All the details can be found in \cite{BIS2}.

First, we introduce the following notation. If $A$ is any set, we denote by $A(\epsilon)$ the (open) $\epsilon$-neighborhood of $A$, i.e.
$$A(\epsilon):=\{z \in \mathbb{C} : \operatorname{dist}(z,A)<\epsilon\}.$$
The proof of Theorem \ref{thmBishop} is based on the following lemma on the approximation of conformal maps.

\begin{lemma}
\label{lemBishop}
Let $\Gamma$ be an analytic Jordan curve with complementary components $\Omega_1$,$\Omega_2$ and let $\psi_1,\psi_2$ be conformal maps on $\overline{\Omega_1},\overline{\Omega_2}$ such that $\psi_1(\Omega_1)$ and $\psi_2(\Omega_2)$ are the complementary components of some Jordan curve $\Gamma'$. Further, let $\alpha,\delta,\eta>0$.

Then there exists an analytic Jordan curve $\gamma \subset \Gamma(\alpha)$ with complementary components $\omega_1,\omega_2$ and conformal maps $\phi_1,\phi_2$ on $\overline{\omega_1}, \overline{\omega_2}$ such that

\begin{enumerate}[\rm(i)]
\item $\phi_1(\gamma)=\phi_2(\gamma) \subset \Gamma'(\alpha)$;
\item $|\psi_j(z)-\phi_j(z)|<\delta$ for $z \in \Omega_j \setminus \Gamma(\alpha)$, $j=1,2$;
\item $\operatorname{jump}_{\phi_1(\gamma)}(\phi_1,\phi_2)<\eta$.
\end{enumerate}
Here
$$\operatorname{jump}_{\phi_1(\gamma)}(\phi_1,\phi_2):=\sup_{x \in \gamma} \operatorname{dist}_{\phi_1(\gamma)}(\phi_1(x),\phi_2(x)),$$
where the distance is measured by arclength along $\phi_1(\gamma)=\phi_2(\gamma)$.
\end{lemma}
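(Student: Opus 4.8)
The statement is an approximate conformal welding lemma. We are handed two conformal maps $\psi_1,\psi_2$ defined up to the two sides of the analytic curve $\Gamma$, whose images fill out the two complementary sides of $\Gamma'$, and we must nudge both the curve and the maps so that the two sides are carried onto a \emph{common} curve $\gamma'$ with nearly matching boundary parametrisations, all while leaving the maps essentially untouched away from $\Gamma$. Note first that one cannot hope for $\operatorname{jump}=0$: if $\phi_1$ and $\phi_2$ agreed on $\gamma$ they would splice to a single homeomorphism of the sphere conformal off the analytic (hence removable) curve $\gamma$, forcing it to be a M\"obius transformation, which is incompatible with approximating two genuinely different maps $\psi_1,\psi_2$. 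So smallness of the jump, not its vanishing, is exactly the right target.

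The plan is to manufacture the configuration from a quasiconformal ``gluing'' map and then pass back to genuinely conformal maps. First I would exploit analyticity: since $\Gamma$ and $\Gamma'$ are analytic, each $\psi_j$ extends conformally across $\Gamma$ to a fixed annular neighbourhood of $\Gamma$, so I may fix analytic curves $\sigma_1\subset\Omega_1$ and $\sigma_2\subset\Omega_2$ inside $\Gamma(\alpha)$ that bound a collar $A$ around $\Gamma$ with $\overline A\subset\Gamma(\alpha)$. Shrinking the collar toward $\Gamma$, the images $\tau_j:=\psi_j(\sigma_j)$ are analytic curves inside $\Gamma'(\alpha)$ bounding a collar $A'$ around $\Gamma'$. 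I would then build a homeomorphism $G$ of the sphere equal to $\psi_1$ outside $A$ on the $\Omega_1$ side, equal to $\psi_2$ outside $A$ on the $\Omega_2$ side, and interpolating across $A$ onto $A'$ so as to match $\psi_1$ on $\sigma_1$ and $\psi_2$ on $\sigma_2$; such a $G$ exists, is conformal off $A$, is a genuine homeomorphism (so it splices continuously), and its Beltrami coefficient is supported on $A$.

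Next I would select analytic Jordan curves $\gamma\subset\Gamma(\alpha)$ (a core curve of $A$) and $\gamma'\subset\Gamma'(\alpha)$ (an analytic curve closely shadowing $G(\gamma)$), and take $\phi_1,\phi_2$ to be the Riemann maps of the two complementary components $\omega_1,\omega_2$ of $\gamma$ onto the two complementary components of $\gamma'$, normalised to track $G$ (equivalently $\psi_j$) near infinity. Because both curves are analytic, each $\phi_j$ is conformal up to $\overline{\omega_j}$, and by construction $\phi_1(\gamma)=\gamma'=\phi_2(\gamma)\subset\Gamma'(\alpha)$, giving (i). For (ii): as the collar is shrunk, $\omega_j$ degenerates to $\Omega_j$ and the target domain to $\psi_j(\Omega_j)$, so by the Carath\'eodory kernel theorem the normalised Riemann maps $\phi_j$ converge to $\psi_j$ locally uniformly; since $\Omega_j\setminus\Gamma(\alpha)$ is a compact subset of $\Omega_j$, taking the perturbation small enough yields $|\phi_j-\psi_j|<\delta$ there.

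The crux is (iii). The jump is precisely the welding mismatch of the pair $(\phi_1,\phi_2)$ measured along $\gamma'$, and it is governed by how far $G$ is from being conformal near the core curve $\gamma$: if the dilatation of $G$ is small on a subcollar of $A$ separating $\gamma$ from the part of $A$ carrying the bulk of the interpolation, then both $\phi_j$ stay uniformly close to $G$ up to $\gamma$, hence close to one another there. Making the mismatch as small as the prescribed $\eta$ while keeping $\gamma$ confined to the thin collar $\Gamma(\alpha)$ is the main obstacle: it forces one to spread the interpolation over a sub-annulus of large conformal modulus and to estimate the boundary correspondence on $\gamma$ rather than merely interior values, and it is here that the quantitative modulus and welding estimates do all the work (with the measurable Riemann mapping theorem, Theorem~\ref{MRMT}, available to absorb any residual dilatation so that the final $\phi_j$ are exactly conformal and $\gamma$ exactly analytic). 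The verifications of (i), (ii) and the analyticity of $\gamma$ are by comparison soft; the entire difficulty is concentrated in the uniform boundary control behind (iii).
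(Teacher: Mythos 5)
First, note that the paper itself does not prove Lemma \ref{lemBishop}: its ``proof'' is the citation ``See \cite{BIS2}.'' So your attempt can only be measured against Bishop's argument, and against the lemma itself. Your treatment of (i) and (ii) is fine (Carath\'eodory kernel convergence handles (ii) on the compact set $\Omega_j \setminus \Gamma(\alpha)$), and your opening remark that the jump cannot be made to vanish --- because an analytic curve is $CH$-removable --- is correct and clarifying. The problem is (iii), which is the entire content of the lemma, and your proposed mechanism for it does not work. You want to interpolate between $\psi_1$ and $\psi_2$ by a homeomorphism $G$ whose Beltrami coefficient lives in a collar $A\subset\Gamma(\alpha)$, and then to make the dilatation small near the core curve $\gamma$ by ``spreading the interpolation over a sub-annulus of large conformal modulus.'' No such sub-annulus exists: any annulus inside $\Gamma(\alpha)$ that separates the region where $G=\psi_1$ from the region where $G=\psi_2$ is essential in the collar, and its modulus is bounded by the modulus of $\Gamma(\alpha)$ itself, which is of order $\alpha/\operatorname{diam}(\Gamma)$ and small. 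Across that conformally thin annulus, $G$ must absorb the full welding mismatch between $\psi_1|_{\Gamma}$ and $\psi_2|_{\Gamma}$, which is fixed data of unit size independent of $\alpha,\delta,\eta$; a Dehn-twist--type estimate then forces the dilatation of $G$ to be large on every essential sub-annulus on at least one side of $\gamma$. Consequently the assertion that ``both $\phi_j$ stay uniformly close to $G$ up to $\gamma$'' fails: a quasiconformal map whose dilatation is large on a thin collar against the boundary can displace the induced boundary correspondence by an arbitrary amount --- this is exactly how arbitrary weldings are realized --- so closeness of $\phi_1$ and $\phi_2$ on $\gamma$ does not follow. The step you describe as ``where the quantitative modulus and welding estimates do all the work'' is not merely left to the reader; the framework you set up cannot supply it.

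For comparison, Bishop's construction in \cite{BIS2} is not a small-dilatation interpolation. The curve $\gamma$ and its image are obtained by an explicit geometric modification of $\Gamma$ and $\Gamma'$ (rerouting the curves through narrow channels and attaching fingers of small diameter and controlled harmonic measure), and the bound $\operatorname{jump}_{\phi_1(\gamma)}(\phi_1,\phi_2)<\eta$ comes from forcing $\phi_1(x)$ and $\phi_2(x)$ onto a common short arc of the new image curve, not from a bound on a Beltrami coefficient. To salvage your plan you would need to replace the ``small dilatation near $\gamma$'' step by a construction of this kind, or by a quantitative welding statement valid in conformally thin collars; as written, the proposal has a genuine gap at the only hard point of the lemma.
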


\begin{proof}
See \cite{BIS2}.
\end{proof}
Let us assume the above lemma and prove Theorem \ref{thmBishop}.

\begin{proof}

Let $(\epsilon_n)$ be a sequence of positive numbers decreasing to zero which we will determine later. Start with an analytic curve $\Gamma^0$ with complementary components $\Omega_1^0, \Omega_2^0$ and conformal maps $\psi_1^{0},\psi_2^{0}$ on $\overline{\Omega_1^0}, \overline{\Omega_2^0}$ mapping $\Omega_1^0, \Omega_2^0$ onto the complementary components of some Jordan curve. By Lemma \ref{lemBishop}, there exists an analytic Jordan curve $\Gamma^1 \subset \Gamma^0(\epsilon_1)$ and conformal maps $\psi_1^1,\psi_2^1$ approximating $\psi_1^0,\psi_2^0$ such that
$$\psi_1^1(\Gamma^1)=\psi_2^1(\Gamma^1) \subset \psi_1^0(\Gamma^0)(\epsilon_1) (=\psi_2^0(\Gamma^0)(\epsilon_1))$$
and
$$\operatorname{jump}(\psi_1^1,\psi_2^1)<\frac{1}{2}.$$
At the $n$-th step, we replace $\Gamma^{n-1}$ by $\Gamma^n$ and $\psi_1^{n-1},\psi_2^{n-1}$ by $\psi_1^n,\psi_2^n$ such that
$$\Gamma^n \subset \Gamma^{n-1}(\epsilon_n),$$
$$\psi_1^n(\Gamma^n)=\psi_2^n(\Gamma^n) \subset \psi_1^{n-1}(\Gamma^{n-1})(\epsilon_n) ( = \psi_2^{n-1}(\Gamma^{n-1})(\epsilon_n))$$
and
$$\operatorname{jump}(\psi_1^n,\psi_2^n)<2^{-n}.$$
Then the limits $\Gamma:=\lim_{n \to \infty} \Gamma^n$, $\psi_j:=\lim_{n \to \infty} \psi_j^n$ for $j=1,2$ exist and $\psi_1=\psi_2$ on $\Gamma$, thus these two maps define a continuous function on $\mathbb{C}_\infty$ which is conformal outside $\Gamma$. By a sufficiently small choice of each $\epsilon_n$, we can make sure that $\Gamma$ is a Jordan curve and that $\psi_1,\psi_2$ are injective on $\Gamma$, and therefore define a homeomorphism of the sphere onto itself which is conformal outside $\Gamma$. Moreover, we can arrange for this homeomorphism not to be M\"{o}bius, since $\psi_1^n,\psi_2^n$ uniformly approximate $\psi_1^0,\psi_2^0$ sufficiently far away from $\Gamma$. Finally, for $\Lambda_h(\Gamma)=\Lambda_h(\phi(\Gamma))=0$ to hold it suffices to choose $\epsilon_n$ so small that both $\Gamma^{n-1}$ and $\psi_1^{n-1}(\Gamma^{n-1})$ can be covered by $N$ disks of radius $\epsilon_n$, where $N$ is such that $Nh(\epsilon_n)$ is less than $2^{-n}$. This is possible since the curves are analytic and $h(t)=o(t)$ as $t \to 0$.

\end{proof}

\subsection{Applications to the dynamics of  complex quadratic polynomials}
In this subsection, we discuss some applications of $CH$-removability to the dynamics of quadratic polynomials. We consider the family of polynomials
$$f_c(z):=z^2+c,$$
where $c \in \mathbb{C}$.

Let us first review some elementary notions of holomorphic dynamics.

For $c \in \mathbb{C}$, the \textit{basin of infinity} of $f_c$ is defined as the set of all points that escape to infinity under iteration :
$$\mathcal{D}_c(\infty):=\{z \in \mathbb{C}_\infty : f_c^n(z) \to \infty \, \, \mbox{as} \, \, n \to \infty\},$$
where $f_c^n$ is the composition of $f_c$ with itself $n$ times. It is a completely invariant domain containing the point $\infty$. Its complement in the Riemann sphere is denoted by $\mathcal{K}_c$ and is called the \textit{filled Julia set}. The filled Julia set and the basin of infinity have a common boundary $\mathcal{J}_c:=\partial \mathcal{K}_c = \partial \mathcal{D}_c(\infty)$ called the \textit{Julia set}. The Julia set is either connected or a Cantor set (totally disconnected perfect compact set), and the latter case happens if and only if $0 \in \mathcal{D}_c(\infty)$. The \textit{Fatou set} $\mathcal{F}_c$ is defined as the complement of the Julia set :
$$\mathcal{F}_c:= \mathbb{C}_\infty \setminus \mathcal{J}_c = \mathcal{D}_c(\infty) \cup \operatorname{int}(\mathcal{K}_c).$$
It is also the maximal set of normality of the sequence of iterates $(f_c^n)_{n \in \mathbb{N}}$.

The \textit{Mandelbrot set} $M$ is the set of all parameters $c \in \mathbb{C}$ such that the Julia set $\mathcal{J}_c$ is connected. It is a connected compact set. A famous conjecture (the so-called \textit{MLC-conjecture}) asserts that $M$ is locally connected.

Let $z_0$ be a periodic point of $f_c$ of period $p$, meaning that $p$ is the smallest integer such that $f_c^p(z_0)=z_0$. We define the \textit{multiplier} of $z_0$ as
$$\lambda(z_0):= (f_c^p)'(z_0)=\prod_{n=0}^{p-1} f_c'(f_c^n(z_0)).$$
The point $z_0$ is called \textit{attracting} if $|\lambda(z_0)|<1$.

We say that a quadratic polynomial $f_c$ is \textit{hyperbolic} if it has an attracting periodic point or if its Julia set is a Cantor set. This is equivalent to the dynamics being \textit{expanding} on the Julia set. The following result on hyperbolic Julia sets is well-known.

\begin{proposition}
Let $f_c$ be a hyperbolic quadratic polynomial whose Julia set $\mathcal{J}_c$ is connected. Then the Riemann map $h:\mathbb{D} \to \mathcal{D}_\infty$ admits a H\"{o}lder continuous extension to the closed unit disk $\overline{\mathbb{D}}$. In particular, $\mathcal{J}_c$ is locally connected.
\end{proposition}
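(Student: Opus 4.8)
The plan is to realize the Riemann map $h$ through the Böttcher coordinate and then exploit hyperbolicity to control its boundary behaviour. Since $\mathcal{J}_c$ is connected, the basin $\mathcal{D}_c(\infty)$ is simply connected and carries a Böttcher coordinate, i.e. a conformal isomorphism $\phi : \mathcal{D}_c(\infty) \to \mathbb{C}_\infty \setminus \overline{\mathbb{D}}$ with $\phi(\infty) = \infty$ satisfying $\phi(f_c(z)) = \phi(z)^2$. Writing $\psi := \phi^{-1}$, the Riemann map can be taken to be $h(w) = \psi(1/w)$, so that, since $w \mapsto 1/w$ is bi-Lipschitz near the unit circle, it suffices to prove that $\psi$ extends Hölder continuously to $\{|w| \geq 1\}$. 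The functional equation then reads $\psi(w^2) = f_c(\psi(w))$, whence by iteration $\psi(w) = g_n(\psi(w^{2^n}))$ for a suitable univalent inverse branch $g_n$ of $f_c^n$.

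First I would record the quantitative consequence of hyperbolicity. Since $f_c$ is hyperbolic with connected Julia set, the critical point lies in the interior of $\mathcal{K}_c$ and stays a definite distance from $\mathcal{J}_c$; consequently the inverse branches of $f_c$ are well-defined and univalent on a fixed neighbourhood of $\mathcal{J}_c$, and the dynamics is expanding there, so that there are constants $\lambda > 1$ and $C > 0$ with $|(f_c^n)'(z)| \geq C\lambda^n$ for $z$ near $\mathcal{J}_c$. Equivalently, the univalent inverse branches of $f_c^n$ contract diameters by a factor $\lesssim \lambda^{-n}$ on the collar $\mathcal{A}_0 := \psi(\{1 < |w| \le 2\})$.

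Next I would set up the induced tiling and estimate the size of its pieces. The equipotentials $\{|w| = 2^{2^{-n}}\}$, radially subdivided into $2^n$ arcs, transport under $\psi$ to a decomposition of the collar around $\mathcal{J}_c$ into tiles; the tile at level $n$ is the image of a box of diameter $\asymp 2^{-n}$ in the $w$-plane, obtained from a level-$0$ piece of $\mathcal{A}_0$ by a univalent inverse branch $g_n$ of $f_c^n$. The Koebe distortion theorem, applicable because each $g_n$ is univalent on a definite neighbourhood of bounded conformal geometry, together with the contraction estimate above, yields $\operatorname{diam}(g_n(\text{piece})) \lesssim \lambda^{-n}$ uniformly over the $2^n$ tiles at level $n$. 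Thus a box of size $2^{-n}$ in the $w$-plane maps under $\psi$ to a set of diameter $\lesssim \lambda^{-n} = (2^{-n})^{\alpha}$ with $\alpha := \log \lambda / \log 2$.

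Finally I would deduce Hölder continuity and local connectivity. Given $w_1, w_2$ with $|w_1 - w_2| \asymp 2^{-n}$, they lie in a uniformly bounded number of adjacent level-$n$ tiles, so $|\psi(w_1) - \psi(w_2)| \lesssim \lambda^{-n} \asymp |w_1 - w_2|^{\alpha}$; this is exactly the desired estimate and it persists on $\{|w| \geq 1\}$, whence $\psi$, and therefore $h$, extends Hölder continuously to the closed disk. In particular $\mathcal{J}_c = h(\partial \mathbb{D})$ is a continuous image of the circle, hence a Peano continuum and so locally connected (equivalently, local connectivity follows from Carathéodory's theorem once $h$ extends continuously). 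The main obstacle is the uniform distortion control in the tiling step: one must verify that the branches $g_n$ are genuinely univalent with uniformly bounded distortion across all $2^n$ tiles, which is precisely where hyperbolicity—keeping the critical orbit away from $\mathcal{J}_c$—is essential, and where passing to an adapted (e.g. hyperbolic) metric is convenient in order to make the expansion constant $\lambda$ honest.
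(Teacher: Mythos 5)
The paper offers no proof of this proposition; it is stated as well-known and immediately combined with Corollary \ref{josHolder}. Your argument is the standard one (it is essentially the proof in Carleson--Gamelin or in the Carleson--Jones--Yoccoz circle of ideas) and it is correct in outline: B\"ottcher coordinate and the functional equation $\psi(w^2)=f_c(\psi(w))$, uniform expansion near $\mathcal{J}_c$ from hyperbolicity, univalent inverse branches with Koebe distortion control on the fundamental annuli, geometric decay of the level-$n$ tiles, and then the H\"older estimate. Two small points you should not gloss over when writing it up. First, the final step as stated only treats pairs $w_1,w_2$ lying in adjacent level-$n$ tiles, i.e.\ at depth comparable to their mutual distance; for arbitrary pairs (in particular pairs on or very near $\mathbb{T}$) you need the telescoping argument: connect each $w_i$ radially to level $n$ and sum the diameters of the tiles traversed, which converges geometrically and is dominated by the level-$n$ term, giving the same bound $\lesssim\lambda^{-n}$. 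Second, the exponent should be taken as $\alpha=\min\bigl(1,\log\lambda/\log 2\bigr)$, since a local estimate with exponent exceeding $1$ does not chain to a global H\"older bound with that exponent (and is anyway unnecessary). With those two routine repairs your sketch is a complete and correct proof.
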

Combining this with Jones and Smirnov's result (Corollary \ref{josHolder}), we obtain
\begin{theorem}
\label{thmHyper}
If $f_c$ is a hyperbolic quadratic polynomial whose Julia set $\mathcal{J}_c$ is connected, then $\mathcal{J}_c$ is $CH$-removable.
\end{theorem}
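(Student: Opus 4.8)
The plan is to identify the basin of infinity $\mathcal{D}_c(\infty)$ as a H\"{o}lder domain and then quote the Jones--Smirnov result. First I would record the two topological facts that make the setup of Corollary \ref{josHolder} applicable. Since $\mathcal{J}_c$ is connected, so is the filled Julia set $\mathcal{K}_c$; as $\mathbb{C}_\infty \setminus \mathcal{D}_c(\infty) = \mathcal{K}_c$ is connected, the basin $\mathcal{D}_c(\infty)$ is a simply connected domain on the sphere, with $\partial \mathcal{D}_c(\infty) = \mathcal{J}_c$. Thus $\mathcal{J}_c$ is exactly the boundary of the simply connected domain $\Omega = \mathcal{D}_c(\infty)$ to which Theorem \ref{jos} and its corollary refer.

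Next I would invoke the preceding proposition: because $f_c$ is hyperbolic with connected Julia set, the Riemann map $h : \mathbb{D} \to \mathcal{D}_c(\infty)$ extends H\"{o}lder continuously to $\overline{\mathbb{D}}$. By definition this says precisely that $\mathcal{D}_c(\infty)$ is a H\"{o}lder domain. Corollary \ref{josHolder} then applies verbatim and yields that its boundary $\mathcal{J}_c$ is $CH$-removable, which is the assertion of the theorem.

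The only genuine point requiring care --- and where I expect the subtlety to lie --- is that $\mathcal{D}_c(\infty)$ contains the point $\infty$ and is unbounded, whereas the Whitney and shadow machinery behind Theorem \ref{jos} is most naturally phrased for bounded domains. Two remedies are available. One can check directly that the Whitney squares accumulating at $\infty$ lie far from the boundary and have empty shadows, so that they do not affect the summability condition $\sum_j \operatorname{diam}(S(Q_j))^2 < \infty$, which is then controlled entirely by the H\"{o}lder estimate near $\mathcal{J}_c$. Alternatively, one may first conjugate by a M\"{o}bius transformation sending $\infty$ to a finite point, turning $\mathcal{D}_c(\infty)$ into a bounded H\"{o}lder domain; since $CH$-removability is manifestly invariant under M\"{o}bius changes of coordinate (conjugation preserves the class $CH(\Omega)$), this reduces the statement to the bounded case. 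With this bookkeeping done, the theorem follows immediately from the two assumed inputs, the substantive analytic content residing in the H\"{o}lder extension for hyperbolic Julia sets and in the Jones--Smirnov estimate itself.
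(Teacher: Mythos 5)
Your argument is exactly the paper's: the preceding proposition shows the Riemann map onto the basin of infinity extends H\"older continuously, so $\mathcal{D}_c(\infty)$ is a H\"older domain with boundary $\mathcal{J}_c$, and Corollary \ref{josHolder} gives $CH$-removability. The extra bookkeeping about the point at infinity is fine but not something the paper dwells on; the substance is identical.
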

Moreover, it is possible to show that totally disconnected Julia sets are boundaries of John domains \cite{CAR3}, thus they must be $CH$-removable by Theorem \ref{jos} or \cite{JON}. In fact, using Mcmullen's sufficient condition \cite[Theorem 2.16]{MCM}, it is not difficult to prove that these Julia sets have absolute area zero, so they are actually $S$-removable by Theorem \ref{AhlforsB}. Combining this with Theorem \ref{thmHyper}, we obtain that every homeomorphism of the sphere onto itself that is conformal outside the Julia set of a hyperbolic quadratic polynomial is actually conformal everywhere. Note that this is false if we consider rational functions instead of polynomials. Indeed, there are hyperbolic rational functions with nonremovable Julia sets; an example is given by $R(z):= z^2 + \lambda/z^3$ for $\lambda>0$ small. The Julia set of $R$ is a nonremovable Cantor set of circles of zero area. It is, however, \textit{dynamically removable}. See \cite[Section 9.2]{GRA2} and \cite[Section 11.8]{BEA}.

Other examples of $CH$-removable Julia sets include those of \textit{subhyperbolic} or \textit{Collet-Eckmann} quadratic polynomials (see \cite{JON} and \cite{GRA} for their definition), since they are known to be boundaries of John domains \cite{CAR2} and of H\"{o}lder domains \cite{GRA} respectively. It was conjectured in \cite{JON} that all Julia sets of quadratic polynomials are $CH$-removable. This is now well-known to be false, since there are Julia sets of positive area \cite{BUF}. We do not know any example though of a nonremovable quadratic Julia set with zero area.

Lastly, we mention the work of Jeremy Kahn who proved in his Ph.D. thesis that Julia sets of quadratic polynomials $f_c$ with $c \in M$ such that either
\begin{enumerate}[\rm(i)]
\item both of the fixed points of $f_c$ are repelling and $f_c$ is not renormalizable

or

\item all of the periodic cycles of $f_c$ are repelling and $f_c$ is not infinitely renormalizable
\end{enumerate}
are $CH$-removable. Then one can deduce from this that the Mandelbrot set $M$ is locally connected at such $c$'s. This illustrates the importance of studying the $CH$-removability of Julia sets of quadratic polynomials whose parameter belongs to the boundary of the Mandelbrot set. We also remark that it is not known whether the boundary of $M$ itself is $CH$-removable. It is not even known if it has zero area, although we now know thanks to a result of Shishikura \cite{SHI} that its Hausdorff dimension is two.

\subsection{Applications to Conformal welding}

In this subsection, we present another application of $CH$-removability.

Let $\mathbb{D}$ be the open unit disk, let $\mathbb{T}:=\partial \mathbb{D}$ be the unit circle and set $\mathbb{D}^{*}:=\mathbb{C}_\infty \setminus \overline{\mathbb{D}}$. Let $\Omega \subset \mathbb{C}_\infty$ be a Jordan domain and let $f:\mathbb{D} \to \Omega$ and $g:\mathbb{D}^* \to \Omega^*$ be conformal maps, where $\Omega^*:=\mathbb{C}_\infty \setminus \overline{\Omega}$. By a well-known theorem of Carath\'eodory, the maps $f$ and $g$ extend to homeomorphisms on $\overline{\mathbb{D}}$, so that $h:=f^{-1} \circ g : \mathbb{T} \to \mathbb{T}$ is an orientation-preserving homeomorphism.

\begin{definition}
We say that $h: \mathbb{T} \to \mathbb{T}$ is a \textit{conformal welding} for $\Omega$.
\end{definition}

Note that $h$ is uniquely defined up to post-composition and pre-composition with automorphisms of $\mathbb{D}$. Moreover, if $T$ is any M\"{o}bius transformation, then $\Omega$ and $T(\Omega)$ give rise to the same welding homeomorphism and thus the map
$$\mathcal{W}: [\Omega] \to [h]$$
is well-defined, where
$$[\Omega]:= \{T(\Omega) : T \, \mbox{is a M\"{o}bius transformation} \}$$
and
$$[h]:= \{ \phi \circ h \circ \psi : \phi, \psi \in \operatorname{Aut}(\mathbb{D}) \}.$$

Conformal welding has several important applications. For instance, it is a fundamental notion in the theory of Teichm\"{u}ller spaces and Fuchsian groups. It was also used by Courant in the 1930's in his solution of the Plateau-Douglas problem of minimal surfaces. More recently, it was observed by Sharon and Mumford \cite{SHM} that conformal welding is a useful tool in the field of computer vision and numerical pattern recognition, especially for the problem of classifying and recognizing objects from their observed silhouette. For more information on the various applications of conformal welding, the interested reader may consult the survey article \cite{HAM} by Hamilton and the references therein.

It is well-known that the map $\mathcal{W}$ is not surjective. However, its image contains the set of \textit{quasisymmetric} homeomorphisms; this is usually referred to as the \textit{fundamental theorem of conformal welding} and it was first proved by Pfluger \cite{PFL} in 1960. Another proof was published shortly after by Lehto and Virtanen \cite{LEH2}. We also refer the reader to \cite{BIS3} for an elementary geometric proof using Koebe's circle domain theorem as well as \cite{SCH} for a functional analytic proof.

As for the injectivity of the map $\mathcal{W}$, the following proposition shows that it is closely related to removability properties.

\begin{proposition}
\label{Uniqueness}
Let $\Omega, \tilde{\Omega} \subset \mathbb{C}_\infty$ be Jordan domains. Then $\mathcal{W}([\Omega])=\mathcal{W}([\tilde{\Omega}])$ if and only if there exists a map $F\in CH(\mathbb{C}_\infty \setminus \partial \Omega)$ such that $F(\Omega)=\tilde{\Omega}$.
\end{proposition}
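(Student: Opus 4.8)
The plan is to prove both directions by explicitly constructing the relevant maps from the welding data. Recall the setup: for a Jordan domain $\Omega$ with conformal maps $f:\mathbb{D} \to \Omega$ and $g:\mathbb{D}^* \to \Omega^*$, the welding is $h = f^{-1} \circ g$; similarly $\tilde\Omega$ gives $\tilde f, \tilde g$ and $\tilde h = \tilde f^{-1} \circ \tilde g$. The equality $\mathcal{W}([\Omega]) = \mathcal{W}([\tilde\Omega])$ means $\tilde h = \phi \circ h \circ \psi$ for some $\phi, \psi \in \operatorname{Aut}(\mathbb{D})$, and by absorbing these automorphisms into a renormalization of $\tilde f$ and $\tilde g$ (which is legitimate since $[\tilde\Omega]$ and $[h]$ are defined up to exactly these ambiguities) I may assume outright that $h = \tilde h$, i.e. $f^{-1} \circ g = \tilde f^{-1} \circ \tilde g$ on $\mathbb{T}$.

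\emph{Reverse direction.} Suppose $F \in CH(\mathbb{C}_\infty \setminus \partial\Omega)$ with $F(\Omega) = \tilde\Omega$; then automatically $F(\Omega^*) = \tilde\Omega^*$, since $F$ is a homeomorphism of the sphere carrying the complement of $\overline\Omega$ to the complement of $\overline{\tilde\Omega}$. I would set $\tilde f := F \circ f$ and $\tilde g := F \circ g$. Each is a composition of a conformal map with $F$, which is conformal on $\Omega$ and $\Omega^*$ respectively, so $\tilde f : \mathbb{D} \to \tilde\Omega$ and $\tilde g : \mathbb{D}^* \to \tilde\Omega^*$ are conformal. On the circle $\mathbb{T}$, using continuity of $F$ up to $\partial\Omega$ and the boundary correspondence, I compute $\tilde f^{-1} \circ \tilde g = f^{-1} \circ F^{-1} \circ F \circ g = f^{-1} \circ g = h$, so $\tilde\Omega$ has the same welding as $\Omega$.

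\emph{Forward direction} is the substantive part and is where $CH$-removability enters. Assuming $f^{-1} \circ g = \tilde f^{-1} \circ \tilde g$ on $\mathbb{T}$, I would define a candidate map $F$ piecewise by
$$
F := \begin{cases} \tilde f \circ f^{-1} & \text{on } \overline\Omega, \\ \tilde g \circ g^{-1} & \text{on } \overline{\Omega^*}. \end{cases}
$$
On $\Omega$ the first piece is a composition of conformal maps hence conformal, and likewise the second piece is conformal on $\Omega^*$; each extends to a homeomorphism up to $\partial\Omega$ by Carath\'eodory. The key check is that the two definitions agree on $\partial\Omega$: for $w \in \partial\Omega$, writing $w = f(\zeta)$ with $\zeta \in \mathbb{T}$, the welding identity gives $g(h(\zeta)) = f(\zeta) = w$, and then $\tilde f(f^{-1}(w)) = \tilde f(\zeta)$ while $\tilde g(g^{-1}(w)) = \tilde g(h(\zeta)) = \tilde f(\zeta)$ by the hypothesis $\tilde h = h$, so the two pieces match. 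Hence $F$ is a well-defined homeomorphism of $\mathbb{C}_\infty$ onto itself that is conformal on $\mathbb{C}_\infty \setminus \partial\Omega$ and satisfies $F(\Omega) = \tilde\Omega$, i.e. $F \in CH(\mathbb{C}_\infty \setminus \partial\Omega)$.

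The main subtlety worth flagging is that gluing two conformal maps along a common boundary arc does \emph{not} a priori yield a map that is conformal \emph{across} the curve; the whole point is that $F$ is only asserted to lie in $CH(\mathbb{C}_\infty \setminus \partial\Omega)$, i.e. conformal off the welding curve, not everywhere. This is exactly why the statement is a biconditional at the level of $CH$ rather than forcing $F$ to be M\"obius. The $CH$-removability of $\partial\Omega$ is precisely the extra hypothesis that would upgrade such an $F$ to a M\"obius transformation and thereby force $[\Omega] = [\tilde\Omega]$, which is the link to uniqueness of the welding. I expect the only genuinely delicate point in writing this up carefully to be the boundary-matching verification, which rests on Carath\'eodory's theorem guaranteeing that $f, g, \tilde f, \tilde g$ extend to homeomorphisms of the closed disks so that the identity $\tilde h = h$ can be evaluated pointwise on $\mathbb{T}$; everything else is formal composition of homeomorphisms.
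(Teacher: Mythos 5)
Your proposal is correct and follows essentially the same route as the paper: the forward direction glues $\tilde f\circ f^{-1}$ on $\overline\Omega$ with $\tilde g\circ g^{-1}$ on $\overline{\Omega^*}$ using the welding identity and Carath\'eodory extension, and the reverse direction composes with $F$ and reads off the welding, exactly as in the paper (which merely states these steps more tersely). The only blemish is a harmless orientation slip in the boundary check (with $h=f^{-1}\circ g$ one has $f\circ h=g$, so $g^{-1}(w)=h^{-1}(\zeta)$ rather than $h(\zeta)$), which does not affect the argument.
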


\begin{proof}
If $\mathcal{W}([\Omega])=\mathcal{W}([\tilde{\Omega}])$, then there exist conformal maps $f:\mathbb{D} \to \Omega$, $g:\mathbb{D}^* \to \Omega^*$, $\tilde{f}:\mathbb{D} \to \tilde{\Omega}$, $\tilde{g}:\mathbb{D}^* \to \tilde{\Omega}^*$ such that
$$f^{-1} \circ g = \tilde{f}^{-1} \circ \tilde{g}$$
on $\mathbb{T}$, i.e.
$$\tilde{f} \circ f^{-1}=\tilde{g} \circ g^{-1}$$
on $\partial \Omega$. It follows that the conformal map $\tilde{f} \circ f^{-1}$ on $\Omega$ can be extended to a homeomorphism $F:\mathbb{C}_\infty \to \mathbb{C}_\infty$ which is conformal outside $\partial \Omega$. Clearly, $F(\Omega)=\tilde{\Omega}$.

Conversely, if there exists such an $F$, then $F \circ f: \mathbb{D} \to \tilde{\Omega}$ and $F \circ g : \mathbb{D}^* \to \tilde{\Omega}^*$ are conformal whenever $f:\mathbb{D} \to \Omega$ and $g:\mathbb{D}^* \to \Omega^*$ are conformal, so that
$$\mathcal{W}([\tilde{\Omega}]) = [(F \circ f)^{-1} \circ (F \circ g)] = [f^{-1} \circ g] = \mathcal{W}([\Omega]).$$
\end{proof}

\begin{corollary}
\label{UniquenessWelding}
If $\Omega$ is a Jordan domain such that $\partial \Omega$ is $CH$-removable, then for any other Jordan domain $\tilde{\Omega}$, we have $\mathcal{W}[\Omega]=\mathcal{W}[\tilde{\Omega}]$ if and only if $[\Omega]=[\tilde{\Omega}]$.
\end{corollary}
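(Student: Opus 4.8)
The plan is to deduce this directly from Proposition \ref{Uniqueness}, using $CH$-removability of $\partial\Omega$ to upgrade the abstract conjugating map supplied by that proposition into an honest M\"obius transformation. The corollary is essentially a one-line consequence once Proposition \ref{Uniqueness} is in hand, so the work is just in chaining the two facts together cleanly in each direction.

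For the forward implication, I would start by assuming $\mathcal{W}([\Omega]) = \mathcal{W}([\tilde\Omega])$. By Proposition \ref{Uniqueness}, this yields a map $F \in CH(\mathbb{C}_\infty \setminus \partial\Omega)$ with $F(\Omega) = \tilde\Omega$; that is, $F$ is a homeomorphism of the sphere onto itself that is conformal outside $\partial\Omega$. Here is the only place the hypothesis is used: since $\partial\Omega$ is assumed $CH$-removable, the definition of $CH$-removability forces $F$ to be a M\"obius transformation. Writing $T := F$, we then have $\tilde\Omega = T(\Omega)$ with $T$ M\"obius, which is exactly the statement $[\Omega] = [\tilde\Omega]$.

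For the reverse implication, I would assume $[\Omega] = [\tilde\Omega]$, so that $\tilde\Omega = T(\Omega)$ for some M\"obius transformation $T$. A M\"obius transformation is conformal on all of $\mathbb{C}_\infty$, hence in particular $T \in CH(\mathbb{C}_\infty \setminus \partial\Omega)$ and $T(\Omega) = \tilde\Omega$, so applying the reverse direction of Proposition \ref{Uniqueness} gives $\mathcal{W}([\Omega]) = \mathcal{W}([\tilde\Omega])$. (Equivalently, this is just the well-definedness of the map $\mathcal{W}$ on M\"obius equivalence classes already noted in the text, so no removability hypothesis is needed for this direction.)

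I do not expect any genuine obstacle: all the substantive content lives in Proposition \ref{Uniqueness} and in the defining property of $CH$-removability, both of which are available. The only point to state carefully is that the map $F$ produced by Proposition \ref{Uniqueness} is precisely an element of $CH(\mathbb{C}_\infty \setminus \partial\Omega)$, so that the $CH$-removability of $\partial\Omega$ applies to it verbatim and delivers the M\"obius conclusion.
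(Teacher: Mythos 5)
Your proof is correct and is exactly the argument the paper intends: the corollary is stated as an immediate consequence of Proposition \ref{Uniqueness}, with the $CH$-removability of $\partial\Omega$ upgrading the map $F$ to a M\"obius transformation in the forward direction and the reverse direction being just the well-definedness of $\mathcal{W}$. No gaps.
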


In other words, if $\partial \Omega$ is $CH$-removable, then the only other Jordan domains that yield the same welding homeomorphism are the M\"{o}bius images of $\Omega$. It was observed by Maxime Fortier Bourque in his master's thesis that many authors have claimed that the converse is true, either without proof or by giving an incorrect argument. See for instance \cite[Lemma 2]{OIK}, \cite[Corollary II.2]{KNS}, \cite[Section 4]{HAM2} \cite[Corollary 1]{BIS}, \cite[p.324--325]{BIS2}, \cite[Section 3]{HAM}, \cite[Remark 2]{BIS3}, \cite[Section 2.3]{AST2}, \cite[Corollary 1.4]{LIR}. If $\partial \Omega$ is not $CH$-removable, then there exists a non-M\"{o}bius homeomorphism $F: \mathbb{C}_\infty \to \mathbb{C}_\infty$ which is conformal outside $\partial \Omega$. As in Proposition \ref{Uniqueness}, it follows that $\Omega$ and $F(\Omega)$ give rise to the same welding homeomorphism. However, one cannot directly deduce that $[F(\Omega)] \neq [\Omega]$ since there could exist a M\"{o}bius transformation $T$ such that $F(\Omega)=T(\Omega)$, even though $F$ itself is non-M\"{o}bius.

However, if in addition the boundary of $\Omega$ is assumed to have positive area, then the converse of Corollary \ref{UniquenessWelding} holds. Indeed, by the measurable Riemann mapping theorem, there is an infinite-dimensional family of non-M\"{o}bius homeomorphisms of the sphere conformal outside $\partial \Omega$ and one can use some dimension argument to show that the images of $\Omega$ under these conformal homeomorphisms cannot always be M\"{o}bius-equivalent to $\Omega$. A similar argument was used by Sullivan in his proof of the No Wandering Domain theorem.

Unfortunately, as we saw earlier, there are curves of zero area that are not $CH$-removable. We do not know if the converse of Corollary \ref{UniquenessWelding} holds for such curves.
\begin{question}
\label{Quniqueness}
Is the converse of Corollary \ref{UniquenessWelding} true in the zero-area case? In other words, if $\Omega$ is a Jordan domain with zero area boundary such that the only other Jordan domains giving rise to the same welding homeomorphism are the M\"{o}bius images of $\Omega$, then is $\partial \Omega$ necessarily $CH$-removable?
\end{question}

Finally, we end this subsection by mentioning that in some special cases, the welding homeomorphism can be identified explicitly. For instance, the welding homeomorphism of a \textit{proper polynomial lemniscate of degree} $n$ (i.e. a connected set of the form $\Omega:=\{z:|P(z)|<1\}$ where $P$ is a polynomial of degree $n$) is given by the $n$-th root of a Blaschke product of degree $n$ whose zeros are the images of the zeros of $P$ under a conformal map of $\Omega$ onto $\mathbb{D}$. Conversely, any $n$-th root of a Blaschke product of degree $n$ is the welding homeomorphism of a proper polynomial lemniscate of the same degree. This was first proved by Ebenfelt, Khavinson and Shapiro \cite{EKS}. See also \cite{YOU2} for a more elementary proof and a generalization to rational lemniscates.

There are also several efficient numerical methods that can be used to recover the boundary curve $\partial \Omega$ from the welding homeomorphism, such as Marshall's \textit{Geodesic Zipper Algorithm} \cite{MAR} for example.

\subsection{Open questions}
We end this section by discussing some open questions related to $CH$-removability.

\begin{question}
\label{Q1}
Is the union of two $CH$-removable compact sets also $CH$-removable?
\end{question}
Recall that the answer is yes if the two compact sets are assumed to be disjoint, by Corollary \ref{QCHunion}. Furthermore, by Proposition \ref{union}, Proposition \ref{unionA} and Proposition \ref{unionS}, the answer is positive if the class $CH$ is replaced by $H^{\infty}, A$ or $S$.

The main difficulty in Question \ref{Q1} is showing that Proposition \ref{PropExtendQCH} remains true without the assumption that $E$ is contained in $U$ in (i). For the classes $H^{\infty}$ and $A$, we were able to prove the corresponding result using Vitushkin's localization operator and for the class $S$, using the fact that $S$-removable sets are totally disconnected. Unfortunately, these two approaches seem to fail for $CH$-removable sets since they are not necessarily totally disconnected and since Vitushkin's localization operator does not preserve injectivity.

\begin{question}[Bishop \cite{BIS2}]
\label{Q2}
Let $\Gamma$ be a Jordan curve. If $\Gamma$ is not $CH$-removable, does it contain a nonremovable closed proper subset?
\end{question}
Clearly, the answer is positive if $\Gamma$ has positive area. A positive answer in the general case would obviously follow from a positive answer to Question \ref{Q1}.

\begin{question}
\label{Q3}
Let $E \subset \mathbb{C}$ be a compact set which is not $CH$-removable. How large is $CH(\Omega)$? In particular, if $f \in CH(\Omega)$ is non-M\"{o}bius, does there exist another non-M\"{o}bius homeomorphism in $CH(\Omega)$ which is not of the form $T \circ f$ where $T$ is M\"{o}bius?
\end{question}
As mentioned before Question \ref{Quniqueness}, if $E$ has positive area then $CH(\Omega)$ is very large. An affirmative answer to Question \ref{Quniqueness} would follow if one could prove that $CH(\Omega)$ is always large enough, even in the zero-area case.

Finally, recall that if $E$ is not removable for the class $S$, then there exists a conformal map of $\Omega$ onto the complement of a set of positive area, in view of Theorem \ref{AhlforsB}.
\begin{question}
\label{Q4}
If $E\subset \mathbb{C}$ is compact and not $CH$-removable, does there exist a map $f \in CH(\Omega)$ such that $f(E)$ has positive area?
\end{question}
If we could prove that the answer is yes, then this would give a positive answer to Question \ref{Quniqueness}.

\acknowledgments{The author would like to thank Chris Bishop and Misha Lyubich for their interest and helpful discussions, especially on conformal removability and its applications in holomorphic dynamics. We also thank Joe Adams for reading the first draft of the paper and providing valuable comments.}

\bibliographystyle{amsplain}

\end{document}